\DeclareSymbolFontAlphabet{\mathbb}{AMSb}
\DeclareSymbolFontAlphabet{\mathbbl}{bbold}
\theoremstyle{plain}
\newtheorem{theorem}{Theorem}[section]
\newtheorem{lemma}[theorem]{Lemma}
\newtheorem{proposition}[theorem]{Proposition}
\newtheorem{corollary}[theorem]{Corollary}
\newtheorem{addendum}[theorem]{Addendum}
\theoremstyle{definition}
\newtheorem{definition}[theorem]{Definition}
\newtheorem{example}[theorem]{Example}
\newtheorem{remark}[theorem]{Remark}
\global\let\c@equation=\c@theorem}
\newcommand{\id}{\operatorname{id}}
\newcommand{\pr}{\operatorname{pr}}
\newcommand{\tr}{\operatorname{tr}}
\newcommand{\map}{\operatorname{map}}
\newcommand{\Ext}{\operatorname{Ext}}
\newcommand{\colim}{\operatorname{colim}}
\newcommand{\op}{\operatorname{op}}
\newcommand{\THH}{\operatorname{THH}}
\newcommand{\TC}{\operatorname{TC}}
\newcommand{\TP}{\operatorname{TP}}
\newcommand{\HH}{\operatorname{HH}}
\newcommand{\Hom}{\operatorname{Hom}}
\newcommand{\Fil}{\operatorname{Fil}}
\newcommand{\gr}{\operatorname{gr}}
\begin{document}

\renewcommand{\leftmark}{Lars Hesselholt and Thomas Nikolaus}

\chapter{Topological cyclic homology}

\vspace{-12mm}
{\large \textsl{Lars Hesselholt and Thomas Nikolaus}}
\vspace{5mm}

{\small
\begin{quote}
\textsc{Abstract:} This survey of topological cyclic homology
is a chapter in the Handbook on Homotopy Theory. We give a brief
introduction to topological cyclic homology and the cyclotomic trace
map following Nikolaus-Scholze, followed by a proof of
B\"okstedt periodicity that closely resembles B\"okstedt's original
unpublished proof. We explain the extension of B\"{o}kstedt
periodicity by Bhatt-Morrow-Scholze from perfect fields to
perfectoid rings and use this to give a purely $p$-adic proof of Bott
periodicity. Finally, we evaluate the cofiber of the assembly map in
$p$-adic topological cyclic homology for the cyclic group of order $p$ and a
perfectoid ring of coefficients.
\end{quote}
}

\vspace{5mm}

\noindent Topological cyclic homology is a manifestation of Waldhausen's vision
that the cyclic theory of Connes and Tsygan should be developed with
the initial ring $\mathbb{S}$ of higher algebra as base. In his
philosophy, such a theory should be meaningful integrally as opposed
to rationally. B\"{o}kstedt realized this vision for Hochschild
homology~\cite{bokstedt}, and he made the fundamental calculation
that\index{B\"{o}kstedt periodicity}
$$\THH_*(\mathbb{F}_p) = \HH_*(\mathbb{F}_p/\hskip.5pt\mathbb{S}) =
\mathbb{F}_p[x]$$
is a polynomial algebra on a generator $x$ in degree
two~\cite{bokstedt1}. By comparison,
$$\HH_*(\mathbb{F}_p/\hskip.5pt \mathbb{Z}) = \mathbb{F}_p \langle x
\rangle$$
is the  divided power algebra,\footnote{\,The divided
power algebra\index{divided power algebra}
$\mathbb{F}_p \langle x \rangle$ has generators 
$x^{[i]}$ with $i \in \mathbb{N}$ subject to the relations that
$x^{[i]} \cdot x^{[j]} = \binom{i+j}{i} \cdot x^{[i+j]}$ for all $i,j
\in \mathbb{N}$ and $x^{[0]} = 1$. So $x^i = (x^{[1]})^i = i!
\cdot x^{[i]}$.} so B\"{o}kstedt's periodicity
theorem indeed shows that by replacing the base $\mathbb{Z}$
by the base $\mathbb{S}$, denominators disappear. In fact, the
base-change map
$\HH_*(\mathbb{F}_p/\hskip.5pt\mathbb{S}) \to
\HH_*(\mathbb{F}_p/\hskip.5pt\mathbb{Z})$
can be identified with the edge homomorphism of a spectral sequence
$$E_{i,j}^2 = \HH_i(\mathbb{F}_p/\pi_*(\mathbb{S}))_j \Rightarrow
\HH_{i+j}(\mathbb{F}_p/\hskip.5mm\mathbb{S}),$$
so apparently the stable homotopy groups of spheres have exactly the
right size to eliminate the denominators in the divided power algebra.

The appropriate definition of cyclic homology relative to $\mathbb{S}$
was given by
B\"{o}kstedt--Hsiang--Madsen~\cite{bokstedthsiangmadsen}. It involves
a new ingredient not present in the Connes--Tsygan cyclic theory: a
Frobenius map. The nature of this Frobenius map is now much better
understood by the work of 
Nikolaus--Scholze~\cite{nikolausscholze}. As in the Connes--Tsygan
theory, the circle group $\mathbb{T}$ acts on topological Hochschild
homology, and  negative topological cyclic homology and periodic
topological cyclic homology are defined to be the homotopy fixed
points and the Tate construction, respectively, of this action:
\index{topological cyclic homology!negative}
\index{negative topological cyclic homology}
\index{topological cyclic homology!periodic}
\index{periodic topological cyclic homology}
$$\TC^{-}(A) = \THH(A)^{h\mathbb{T}} \hskip9mm \text{and} \hskip9mm
\TP(A) = \THH(A)^{t\mathbb{T}}.$$
There is always a canonical map from the homotopy fixed points to the
Tate construction, but, after $p$-completion, the $p$th Frobenius
gives rise to another such map and topological cyclic homology is the
equalizer\index{topological cyclic homology}
$$\begin{xy}
(0,0)*+{ \TC(A) }="1";
(25,0)*+{ \TC^{-}(A) }="2";
(64,0)*+{ \TP(A)^{\wedge} = \prod_p \TP(A)_p^{\wedge}. }="3";
{ \ar "2";"1";};
{ \ar@<.7ex>^-{(\varphi_p)} "3";"2";};
{ \ar@<-.7ex>_-{\operatorname{can}} "3";"2";};
\end{xy}$$
Here ``$(-)^{\wedge}$'' and ``$(-)_p^{\wedge}$'' indicates profinite
and $p$-adic completion.

Topological cyclic homology receives a map from algebraic
$K$-theory, called the cyclotomic trace map.
\index{cyclotomic trace map} 
Roughly speaking, this map records traces of powers of matrices and
may be viewed as a denominator-free version of the Chern character. 
There are two theorems that concern the behavior of this map
applied to cubical diagrams of connective $\mathbb{E}_1$-algebras
in spectra. If $A$ is such an $n$-cube, then the theorems give
conditions for the $(n+1)$-cube
$$\xymatrix{
{ K(A) } \ar[r] &
{ \TC(A) } \cr
}$$
to be cartesian. For $n = 1$, the Dundas--Goodwillie--McCarthy
theorem~\cite{dundasgoodwilliemccarthy} 
\index{Dundas--Goodwillie--McCarthy theorem}
states that this is so provided $\pi_0(A_0) \to \pi_0(A_1)$ is a
surjection with nilpotent kernel.  And for $n = 2$, the Land--Tamme
theorem~\cite{landtamme}, 
\index{Land--Tamme theorem}
which strengthens theorems of Corti\~{n}as~\cite{cortinas} and
Geisser--Hesselholt~\cite{gh4}, states that this is so provided $A$ is 
cartesian and $\pi_0(A_1 \otimes_{A_0} A_2) \to \pi_0(A_{12})$ an
isomorphism. For $n \geq 3$, it is an open question to find conditions
on $A$ that make $K(A) \to \TC(A)$ cartesian. It is also not clear
that the conditions for $n = 1$ and $n = 2$ are optimal. Indeed, a
theorem of Clausen--Mathew--Morrow~\cite{clausenmathewmorrow} 
\index{Clausen--Mathew--Morrow theorem}
states that for every commutative ring $R$ and ideal $I \subset R$
with $(R,I)$ henselian, the square
$$\begin{xy}
(0,7)*+{ K(R) }="11";
(25,7)*+{ \TC(R) }="12";
(0,-7)*+{ K(R/I) }="21";
(25,-7)*+{ \TC(R/I) }="22";
{ \ar "12";"11";};
{ \ar "21";"11";};
{ \ar "22";"12";};
{ \ar "22";"21";};
\end{xy}$$
becomes cartesian after profinite completion. So in this case, the
conclusion of the Dundas--Goodwillie--McCarthy theorem holds under
a much weaker assumption on the $1$-cube $R \to R/I$. The
Clausen--Mathew--Morrow theorem may be seen as a $p$-adic analogue of
Gabber rigidity~\cite{gabber}. Indeed, for prime numbers $\ell$ that
are invertible in $R/I$, the left-hand terms in the square above
vanish after $\ell$-adic completion, so the Clausen--Mathew--Morrow
recovers and extends the Gabber rigidity theorem.

Absolute comparison theorems between $K$-theory and
topological cyclic homology begin with the calculation that for
$R$ a perfect commutative $\mathbb{F}_p$-algebra, the cyclotomic trace
map induces an equivalence
$$\xymatrix{
{ K(R)_p^{\wedge} } \ar[r] &
{ \tau_{\geq 0}\TC(R)_p^{\wedge}. } \cr
}$$
The Clausen--Mathew--Morrow theorem then implies that the same is true
for every commutative ring $R$ such that $(R,pR)$ is henselian and
such that the Frobenius $\varphi \colon R/p \to R/p$ is
surjective. Indeed, in this case, $(R/p)^{\operatorname{red}}$ is
perfect and $(R/p,\operatorname{nil}(R/p))$ is henselian;
see~\cite[Corollary~6.9]{clausenmathewmorrow}. In particular, this is
true for all semiperfectoid rings $R$.\footnote{\,A commutative
$\mathbb{Z}_p$-algebra $R$ is perfectoid
(resp.~semiperfectoid)\index{perfectoid ring}\index{semiperfectoid
  ring}, for example,
if there exists a non-zero-divisor $\pi \in R$ with $p \in \pi^pR$
such that the $\pi$-adic topology on $R$ is complete and separated
and such that the Frobenius $\varphi \colon R/\pi \to R/\pi^p$ a
bijection (resp.~surjection).}

The starting point for the calculation of topological cyclic homology
and its variants is the B\"{o}kstedt periodicity theorem, which we
mentioned above. Since B\"{o}kstedt's paper~\cite{bokstedt1} has never
appeared, we take the opportunity to give his proof in
Section~\ref{sec:bokstedtperiodicity} below. The full scope of this
theorem was realized only recently by
Bhatt--Morrow--Scholze~\cite{bhattmorrowscholze1}, who proved that
B\"{o}kstedt periodicity holds for every perfectoid
ring. \index{B\"{o}kstedt periodicity} More precisely, their result,
which we explain in Section~\ref{sec:perfectoidrings} below states
that if $R$ is perfectoid, then\footnote{\,We write
$\pi_*(X,\mathbb{Z}_p)$ for the homotopy groups of 
the\index{$p$-adic homotopy groups} $p$-completion of a spectrum $X$,
and we write $\THH_*(R,\mathbb{Z}_p)$ instead of
$\pi_*(\THH(R),\mathbb{Z}_p)$. For $p$-completion, see
Bousfield~\cite{bousfield}.}
$$\THH_*(R,\mathbb{Z}_p) = R[x]$$
on a polynomial generator of degree $2$. Therefore, as is
familiar from complex orientable cohomology theories, the Tate
spectral sequence
$$E_{i,j}^2 = \hat{H}^{-i}(B\mathbb{T},\THH_j(R,\mathbb{Z}_p))
\Rightarrow \TP_{i+j}(R,\mathbb{Z}_p)$$
collapses, since all non-zero elements are concentrated in even
total degree. However, since the $\mathbb{T}$-action on
$\THH(R,\mathbb{Z}_p)$ is non-trivial, the ring homomorphism given by
the edge homomorphism of the spectral sequence,
$$\xymatrix{
{ \TP_0(R,\mathbb{Z}_p) } \ar[r]^-{\theta} &
{ \THH_0(R,\mathbb{Z}_p),} \cr
}$$
does not admit a section, and therefore, we cannot identify the domain
with a power series algebra over $R$. Instead, this ring homomorphism is
canonically identified with the universal $p$-complete  
pro-infinitesimal thickening\index{$p$-complete pro-infinitesimal thickening}
$$\xymatrix{
{ A = A_{\operatorname{inf}}(R) } \ar[r]^-{\theta} &
{ R } \cr
}$$
introduced by Fontaine~\cite{fontaine}, which we recall in
Section~\ref{sec:perfectoidrings} below. In addition,
Bhatt--Scholze~\cite{bhattscholze} show that, rather than a formal
group over $R$, there is a canonical $p$-typical $\lambda$-ring
structure 
$$\xymatrix{
{ A } \ar[r]^-{\lambda} &
{ W(A), } \cr
}$$
the associated Adams operation $\varphi \colon A \to A$ of which is
the composition of the inverse of the canonical map
$\operatorname{can} \colon \TC_0^{-}(R,\mathbb{Z}_p) \to
\TP_0(R,\mathbb{Z}_p)$ and the Frobenius map $\varphi \colon
\TC_0^{-}(R,\mathbb{Z}_p) \to \TP_0(R,\mathbb{Z}_p)$. The kernel
$I \subset A$ of the edge homomorphism $\theta \colon A \to R$ is a
principal ideal, and Bhatt--Scholze show that the pair
$((A,\lambda),I)$ is a prism\index{prism} in the sense that
$$p \in I + \varphi(I)A,$$
and that this prism is perfect in the sense that
$\varphi \colon A \to A$ is an isomorphism. We remark that if
$\xi \in I$ is a generator, then, equivalently, the prism condition
means that the  intersection of the divisors ``$\xi = 0$'' and
``$\varphi(\xi) = 0$'' is contained in the special fiber
``$p = 0$,'' whence the name. We thank Riccardo Pengo for the
following figure, which illustrates $\operatorname{Spec}(A)$.

\vspace{4mm}
\includegraphics[width=.92\textwidth]{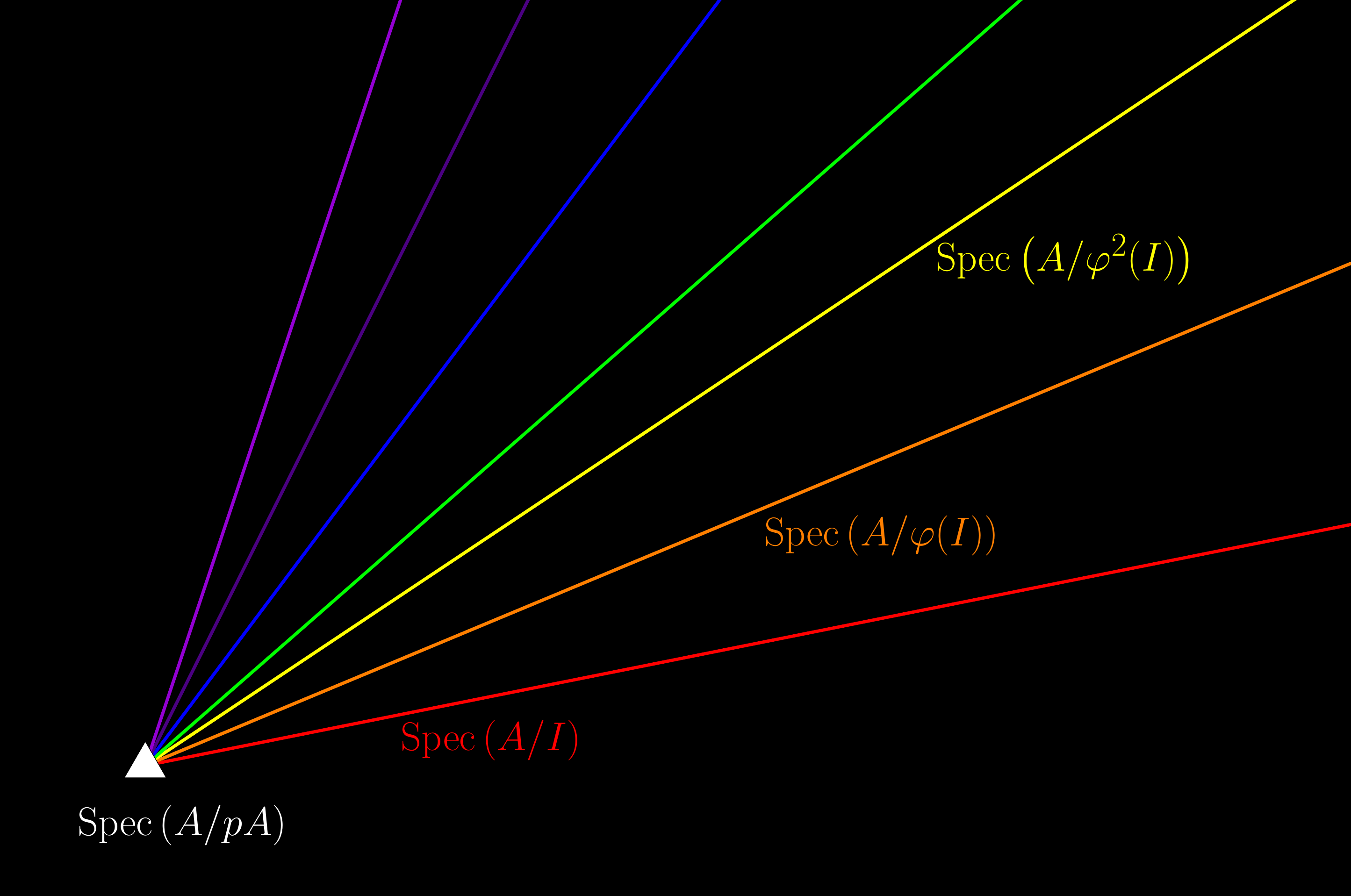}
\vspace{3mm}

\noindent Bhatt--Morrow--Scholze further show that, given the choice
of $\xi$, one can choose $u \in \TC_2^{-}(R,\mathbb{Z}_p)$,
$v \in \TC_{-2}^{-}(R,\mathbb{Z}_p)$, and
$\sigma \in \TP_2(R,\mathbb{Z}_p)$ in such a way that
$$\begin{aligned}
\TC_*^{-}(R,\mathbb{Z}_p) & = A[u,v]/(uv - \xi), \cr
\TP_*(R,\mathbb{Z}_p) & = A[\sigma^{\pm1}], \cr
\end{aligned}$$
and $\varphi(u) = \alpha \cdot \sigma$,
$\varphi(v) = \alpha^{-1}\varphi(\xi) \cdot \xi^{-1}$,
$\operatorname{can}(u) = \xi \cdot \sigma$, and
$\operatorname{can}(v) = \sigma^{-1}$ with $\alpha \in A$ a unit. In
these formulas, the unit $\alpha$ can be eliminated, if one is willing to
replace the generator $\xi \in I$ by the generator
$\varphi^{-1}(\alpha) \cdot \xi \in I$. We use these results for
$R = \mathcal{O}_C$, where $C$ is a complete algebraically closed
$p$-adic field, to give a purely $p$-adic proof of Bott
periodicity. In particular, B\"{o}kstedt periodicity implies
Bott periodicity, but not vice versa.

The Nikolaus--Scholze approach to topological cyclic homology is also
very useful for calculations. To wit, Speirs has much
simplified the calculation of the topological cyclic homology of
truncated polynomial algebras over a perfect
$\mathbb{F}_p$-algebra~\cite{speirs}, and we have evaluated the
topological cyclic homology of planar cuspical curves over a perfect
$\mathbb{F}_p$-algebra~\cite{hn}. Here, we illustrate this approach in
Section~\ref{sec:grouprings}, where we identify the cofiber of the
assembly map
$$\xymatrix{
{ \TC(R) \otimes BC_{p+} } \ar[r] &
{ \TC(R[C_p]) } \cr
}$$
for $R$ perfectoid in terms of an analogue of the affine deformation
to the normal cone along $I \subset A = A_{\operatorname{inf}}(R)$ with
$p$ as the parameter.\goodbreak

Finally, we mention that
Bhatt--Morrow--Scholze~\cite{bhattmorrowscholze2} have constructed
weight\footnote{\,If $S$ is a smooth $\mathbb{F}_p$-algebra, then, on
the $j$th graded piece of the Bhatt--Morrow--Scholze filtration on
$\TP_*(S,\mathbb{Z}_p)[1/p]$, the geometric Frobenius
$\operatorname{Fr}_p$ acts with pure weight $j$ in the
sense that $\smash{ \operatorname{Fr}_p^* = p^j\varphi_p }$, where
$\varphi_p$ is the cyclotomic Frobenius.}
filtrations\index{Bhatt--Morrow--Scholze filtration} of topological
cyclic homology and its variants such that, on $j$th graded pieces,
the equalizer of $p$-completed spectra
$$\begin{xy}
(0,0)*+{ \TC(S,\mathbb{Z}_p) }="1";
(25,0)*+{ \TC^{-}(S,\mathbb{Z}_p) }="2";
(52,0)*+{ \TP(S,\mathbb{Z}_p) }="3";
{ \ar "2";"1";};
{ \ar@<.7ex>^-{\varphi} "3";"2";};
{ \ar@<-.7ex>_-{\operatorname{can}} "3";"2";};
\end{xy}$$
gives rise to an equalizer
\vspace{-2mm}
$$\begin{xy}
(0,0)*+{\phantom{\widehat{\mathbbl{\Delta}}_S} \mathbb{Z}_p(j)[2j] }="1";
(32,0)*+{ \operatorname{Fil}^j\widehat{\mathbbl{\Delta}}_S\{j\}[2j] }="2";
(66,0)*+{ \widehat{\mathbbl{\Delta}}_S\{j\}[2j]
\phantom{\widehat{\mathbbl{\Delta}}_S} }="3";
{ \ar "2";"1";};
{ \ar@<.7ex>^-{\text{``$\frac{\varphi}{\xi^j}$''}} "3";"2";};
{ \ar@<-.7ex>_-{\text{``$\operatorname{incl}$''}} "3";"2";};
\end{xy}$$
Here $S$ is any commutative ring, $\widehat{\mathbbl{\Delta}}_S =
\widehat{\mathbbl{\Delta}}_S\{0\}$ is an $\mathbb{E}_{\infty}$-algebra
in the derived $\infty$-category of $S$-modules,
$\widehat{\mathbbl{\Delta}}_S\{j\}$ is an invertible
$\widehat{\mathbbl{\Delta}}_S$-module, and
$\operatorname{Fil}^{\boldsymbol{\cdot}}\widehat{\mathbbl{\Delta}}_S\{j\}$
is the derived complete descending ``Nygaard'' filtration thereof. The
equalizer $\mathbb{Z}_p(j)$ is a version of syntomic
cohomology\index{syntomic cohomology} that works correctly for all
weights $j$, as opposed to only for $j < p-1$. The
Bhatt--Morrow--Scholze filtration of $\TP(S,\mathbb{Z}_p)$ gives rise
to an Atiyah--Hirzebruch type spectral sequence
$$E_{i,j}^2 = H^{j-i}(\operatorname{Spec}(S),\widehat{\mathbbl{\Delta}}_S\{j\})
\Rightarrow \TP_{i+j}(S,\mathbb{Z}_p),$$
and similarly for $\TC(S,\mathbb{Z}_p)$ and
$\TC^{-}(S,\mathbb{Z}_p)$. If $R$ is perfectoid, then
$$H^i(\operatorname{Spec}(R),\widehat{\mathbbl{\Delta}}_R\{j\}) \simeq
\begin{cases}
A_{\operatorname{inf}}(R) & \text{if $i = 0$} \cr
0 & \text{if $i \neq 0$,} \cr
\end{cases}$$
for all integers $j$, and methods for evaluating these ``prismatic''
cohomology\index{prismatic cohomology} groups are currently being
developed by Bhatt--Scholze~\cite{bhattscholze}. 

It is a great pleasure to acknowledge the  support that we have
received while preparing this chapter. Hesselholt was funded in part by
the Isaac Newton Institute as a Rothschild Distinguished Visiting
Fellow and by the Mathematical Sciences Research Institute as a Simons
Visiting Professor, and Nikolaus was funded in part by Deutsche
Forschungsgemeinschaft under Germany's Excellence Strategy EXC 2044 D390685587, Mathematics M\"unster: Dynamics--Geometry--Structure.

\section{Topological Hochschild homology}\label{sec:thh}

We sketch the definition of topological Hochschild homology,
topological cyclic homology, and the cyclotomic trace map from
algebraic $K$-theory following
Nikolaus--Scholze~\cite{nikolausscholze} and Nikolaus~\cite{nikolaus}.

\subsection{Definition}
\index{topological Hochschild homology|(}

If $R$ is an $\mathbb{E}_\infty$-algebra in spectra, then we define
$\THH(R)$ to be the colimit of the diagram $\mathbb{T} \to
\operatorname{Alg}_{\mathbb{E}_{\infty}}(\mathsf{Sp})$ that is
constant with value $R$, and we write
$$\THH(R) = R^{\hskip.5pt\otimes\hskip.5pt\mathbb{T}}$$
to indicate this colimit. Here $\mathbb{T}$
is the circle group. The action of $\mathbb{T}$ on itself by left
translation induces a $\mathbb{T}$-action on the
$\mathbb{E}_{\infty}$-algebra $\THH(R)$. In addition, the map of
$\mathbb{E}_{\infty}$-algebras $R \to \THH(R)$ induced by the
structure map of the colimit exhibits $\THH(R)$ as the initial
$\mathbb{E}_{\infty}$-algebra with $\mathbb{T}$-action under $R$.
\index{topological Hochschild homology!of
$\mathbb{E}_{\infty}$-algebra in spectra}

We let $p$ be a prime number, and let $C_p \subset \mathbb{T}$ be
the subgroup of order $p$. The
\emph{Tate diagonal}\index{Tate diagonal} is a natural map of
$\mathbb{E}_{\infty}$-algebras in spectra
$$\xymatrix{
{ R } \ar[r]^-{\Delta_p} &
{ (R^{\otimes C_p})^{tC_p}. } \cr
}$$
Heuristically, this map takes $a$ to the equivalence class
of $a \hskip.5pt\otimes \dots \otimes\hskip.5pt a$, but it exist only
in higher algebra.\footnote{\,In fact, if $k$ is a commutative ring,
  then the space of natural transformations between the corresponding
  endofunctors on $\operatorname{Alg}_{\mathbb{E}_{\infty}}(\mathcal{D}(k))$ is empty.}
Moreover, the map $R \to \THH(R)$ of
$\mathbb{E}_{\infty}$-algebras in spectra extends uniquely to a map
$R^{\otimes C_p} \to \THH(R)$ of $\mathbb{E}_{\infty}$-algebras in spectra
with $C_p$-action, which, in turn, induces a map of Tate spectra
$$\xymatrix{
{ (R^{\otimes C_p})^{tC_p} } \ar[r] &
{ \THH(R)^{tC_p}. } \cr
}$$
This map also is a map of $\mathbb{E}_{\infty}$-algebras, and its
target carries a residual action of $\mathbb{T}/C_p$, which we
identify with $\mathbb{T}$ via the isomorphism given by the $p$th
root. Hence, by the universal property of $R \to \THH(R)$, there
is a unique map $\varphi_p$ of $\mathbb{E}_{\infty}$-algebras in
spectra with $\mathbb{T}$-action which makes the diagram
$$\xymatrix{
{ R } \ar[r]^-{\Delta_p} \ar[d] &
{ (R^{\otimes C_p})^{tC_p} } \ar[d] \cr
{ \THH(R) } \ar[r]^-{\varphi_p} &
{ \THH(R)^{tC_p} } \cr
}$$
in
$\operatorname{Alg}_{\mathbb{E}_{\infty}}(\operatorname{Sp})$ commute
(in the $\infty$-categorical sense). The map $\varphi_p$ is called the
$p$th cyclotomic Frobenius,\index{cyclotomic Frobenius} and the family
of maps $(\varphi_p)_{p \in   \mathbb{P}}$ indexed by the set
$\mathbb{P}$ of prime numbers makes $\THH(R)$ a \emph{cyclotomic}
spectrum in the following sense.

\begin{definition}[Nikolaus--Scholze]\label{def:cyclotomicspectrum}
\index{cyclotomic spectrum}A cyclotomic spectrum is a pair of a
spectrum with $\mathbb{T}$-action $X$ and a family
$(\varphi_p)_{p \in \mathbb{P}}$ of $\mathbb{T}$-equivariant maps
$$\xymatrix{
{ X } \ar[r]^-{\varphi_p} &
{ X^{tC_p}. } \cr
}$$
The $\infty$-category of cyclotomic spectra\index{$\infty$-category of
  cyclotomic spectra} is the pullback of simplicial sets
$$\xymatrix{
{ \operatorname{CycSp} } \ar[rr] \ar[d] &&
{ \prod_{p \in \mathbb{P}}
  \operatorname{Fun}(\Delta^1,\operatorname{Sp}^{B\mathbb{T}}) }
  \ar[d]^{\prod (\mathrm{ev}_0, \mathrm{ev}_1)} \cr
{ \operatorname{Sp}^{B\mathbb{T}} }
\ar[rr]^-{(\id, (-)^{tC_p})_{p \in \mathbb{P}}} &&
{ \prod_{p \in \mathbb{P}} \operatorname{Sp}^{B\mathbb{T}} \times
  \operatorname{Sp}^{B\mathbb{T}}. } \cr
}$$
\end{definition}

We remark that, in contrast to the earlier notions of cyclotomic spectra in
Hesselholt--Madsen~\cite{hm} and
Blumberg--Mandell~\cite{blumbergmandell1}, the Nikolaus--Scholze
definition does not require equivariant homotopy theory.

It is shown in \cite{nikolausscholze} that $\operatorname{CycSp}$ is a
presentable stable $\infty$-category, and that it canonically extends
to a symmetric monoidal $\infty$-category
$$\xymatrix{
{ \operatorname{CycSp}^{\otimes } } \ar[r] &
{ \operatorname{Fin}_* } \cr
}$$
with underlying $\infty$-category
$\operatorname{CycSp}$. Now, the construction of $\THH$ given above
produces a lax symmetric monoidal functor
$$\begin{xy}
(0,0)*+{
  \operatorname{Alg}_{\mathbb{E}_{\infty}}(\operatorname{Sp}^{\otimes})
}="1";
(35,0)*+{
  \operatorname{Alg}_{\mathbb{E}_{\infty}}(\operatorname{CycSp}^{\otimes}). }="2";
{ \ar^-{\THH} "2";"1";};
\end{xy}$$
Topological Hochschild homology may be defined, more generally, for
(small) stable $\infty$-categories $\mathcal{C}$.
\index{topological Hochschild homology!of stable $\infty$-categories}
If $R$ is an $\mathbb{E}_{\infty}$-algebra in spectra and $\mathcal{C} =
\operatorname{Perf}_R$ is the stable $\infty$-category of perfect
$R$-modules, there is a canonical equivalence
$$\THH(R) \simeq \THH(\operatorname{Perf}_R)$$
of cyclotomic spectra. 
The basic idea is to define the underlying spectrum
with $\mathbb{T}$-action $\THH(\mathcal{C})$ to be the geometric
realization of the cyclic spectrum that, in simplicial degree $n$, is
given by
$$\THH(\mathcal{C})_n = \operatorname*{colim} \Big(\bigotimes_{0 \leq i \leq n}
\operatorname{map}_{\mathcal C}(x_i , x_{i+1})\Big),$$
where the colimit ranges over the space of $(n+1)$-tuples in the groupoid
core~$\mathcal{C}^{\simeq}$ of the $\infty$-category $\mathcal{C}$,
$\mathrm{map}_{\mathcal C}$ denotes 
the mapping spectrum in $\mathcal C$, and the index $i$ is taken modulo
$n+1$. We indicate the steps necessary to make sense of this
definition, see~\cite{nikolaus} and the forthcoming
paper~\cite{nikolaus1} for details.

First, to make sense of the colimit above, one must construct a functor 
$$\xymatrix{
{ (\mathcal{C}^{\simeq})^{n+1} } \ar[r] &
{ \operatorname{Sp} } \cr
}$$
that to $(x_0, \dots, x_n)$ assigns $\bigotimes_{0 \leq i \leq n}
\operatorname{map}_{\mathcal C}(x_i , x_{i+1})$.
This can be achieved by a combination of the tensor product functor,
the mapping spectrum functor
$\operatorname{map}_{\mathcal{C}} \colon
\mathcal{C}^{\operatorname{op}} \times
\mathcal{C} \to \operatorname{Sp}$, and the canonical equivalence
$\mathcal{C}^\simeq \simeq
(\mathcal{C}^{\operatorname{op}})^{\simeq}$. Second, one must lift the
assignment $n \mapsto \THH(\mathcal{C})_n$ to a functor
$\Lambda^{\operatorname{op}} \to \operatorname{Sp}$ from Connes'
cyclic category such that the face and degeneracy maps are given by
composing adjacent morphisms and by inserting identities,
respectively, while the cyclic operator is given by cyclic permutation
of the tensor factors. As explained
in~\cite[Appendix~B]{nikolausscholze}, for every cyclic spectrum
$\Lambda^{\mathrm{op}} \to \operatorname{Sp}$, the geometric
realization of the simplicial spectrum
$\Delta^{\mathrm{op}} \to \Lambda^{\mathrm{op}} \to \operatorname{Sp}$
carries a natural $\mathbb{T}$-action. Finally, one must construct the
cyclotomic Frobenius maps
$$\xymatrix{
{ \THH(\mathcal{C}) } \ar[r]^-{\varphi_p} &
{ \THH(\mathcal{C})^{tC_p}. } \cr
}$$
These are defined following~\cite[Section~III.2]{nikolausscholze} as the
Tate-diagonal applied levelwise followed by the canonical colimit-Tate
interchange map.
\index{topological Hochschild homology|)}

\subsection{Topological cyclic homology and the trace}
\index{topological cyclic homology|(}

Taking $R$ to be the sphere spectrum $\mathbb{S}$, we obtain an
$\mathbb{E}_{\infty}$-algebra in cyclotomic spectra
$\THH(\mathbb{S})$, which we denote by
$\mathbb{S}^{\,\operatorname{triv}}$. Its underlying spectrum is 
$\mathbb{S}$, and its cyclotomic Frobenius map $\varphi_p$ can be identified with a canonical $\mathbb{T}$-equivariant refinement of the
composition
$$\xymatrix{
{ \mathbb{S} } \ar[r]^-{\operatorname{triv}} &
{ \mathbb{S}^{hC_p} } \ar[r]^-{\operatorname{can}} &
{ \mathbb{S}^{tC_p} } \cr
}$$
of the map $\operatorname{triv}$ induced from the projection
$BC_p \to \operatorname{pt}$ and the canonical map. Since the
$\infty$-category $\operatorname{CycSp}$ is stable, we have associated
with every pair of objects $X, Y \in \operatorname{CycSp}$ a mapping
spectrum $\operatorname{map}_{\operatorname{CycSp}}(X,Y)$, which depends
functorially on $X$ and $Y$. 

\begin{definition}\label{def:tc}The topological cyclic homology of a cyclotomic
spectrum $X$ is the mapping spectrum
$\TC(X) =
\operatorname{map}_{\operatorname{CycSp}}(\mathbb{S}^{\operatorname{triv}},X)$.
\end{definition}

If $X = \THH(R)$ with $R$ an $\mathbb{E}_{\infty}$-algebra in spectra,
then we abbreviate and write $\TC(R)$ instead of $\TC(X)$. Similarly,
if $X = \THH(\mathcal{C})$ with $\mathcal{C}$ a stable
$\infty$-category, then we write $\TC(\mathcal{C})$ instead of
$\TC(X)$.

This definition of topological cyclic homology as given above is
abstractly elegant and useful, but for concrete calculations, a more
concrete formula  is necessary. Therefore, we unpack
Definition~\ref{def:tc}. We assume that $X$ is bounded below and
write
\index{topological cyclic homology!negative}
\index{negative topological cyclic homology}
\index{topological cyclic homology!periodic}
\index{periodic topological cyclic homology}
$$\xymatrix{
{ \TC^-(X) } \ar[r]^-{\operatorname{can}} &
{ \TP(X) } \cr
}$$
for the canonical map $X^{h\mathbb{T}} \to X^{t\mathbb{T}}$. We call
the domain and target of this map the negative topological cyclic
homology and the periodic topological cyclic homology of $X$,
respectively. Since the cyclotomic Frobenius maps
$$\xymatrix{
{ X } \ar[r]^-{\varphi_p} &
{ X^{tC_p} } \cr
}$$
are $\mathbb{T}$-equivariant, they give rise to a map of
$\mathbb{T}$-homotopy fixed points spectra
$$\begin{xy}
(0,0)*+{ \phantom{ \prod_{p \in \mathbb{P}} } X^{h\mathbb{T}} }="1";
(38,0)*+{ \prod_{p \in \mathbb{P}} (X^{tC_p})^{h\mathbb{T}}.
   \phantom{ \prod_{p \in \mathbb{P}} }  }="2";
{ \ar^-{(\varphi_p^{h\mathbb{T}})} "2";"1";};
\end{xy}$$
Moreover, there is a canonical map
$$\xymatrix{
{ \prod_{p \in \mathbb{P}} (X^{tC_p})^{h\mathbb{T}} } &
{ X^{t\mathbb{T}}, } \ar[l] \cr
}$$
which becomes an equivalence after profinite completion, since $X$ is
bounded below, by the Tate-orbit lemma~\cite[Lemma
II.4.2]{nikolausscholze}.\index{Tate orbit lemma} Hence, we get a map
$$\xymatrix{
{ \TC^{-}(X) } \ar[r]^-{\varphi} &
{ \TP(X)^\wedge, } \cr
}$$
where ``$(-)^{\wedge}$'' indicates profinite completion. There is also
a canonical map  
$$\xymatrix{
{ \TC^-(X) } \ar[r]^-{\operatorname{can}} &
{ \TP(X)^\wedge } \cr
}$$
given by the composition of the canonical from the homotopy fixed
point spectrum to the Tate construction followed by the completion
map. This gives the following description of $\TC(X)$.

\begin{proposition}\label{prop:tcformula}For bounded below cyclotomic
spectra $X$, there is natural equalizer diagram
\vspace{-2mm}
$$\xymatrix{
{ \TC(X) } \ar[r] &
{ \TC^-(X) } \ar[r]<.7ex>^-{\varphi}
\ar[r]<-.7ex>_-{\operatorname{can}} &
{ \TP(X)^\wedge. } \cr
}$$
\end{proposition}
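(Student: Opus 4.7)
The plan is to unpack Definition~\ref{def:tc} directly using the pullback description of $\operatorname{CycSp}$ in Definition~\ref{def:cyclotomicspectrum}. Applying the functor $\operatorname{map}_{\operatorname{CycSp}}(\mathbb{S}^{\operatorname{triv}},-)$ to that pullback, and using that a mapping spectrum out of an edge $(\mathrm{ev}_0,\mathrm{ev}_1) \colon \operatorname{Fun}(\Delta^1,\operatorname{Sp}^{B\mathbb{T}}) \to \operatorname{Sp}^{B\mathbb{T}} \times \operatorname{Sp}^{B\mathbb{T}}$ is itself a mapping spectrum fibered over its endpoints, one obtains a natural equalizer
\begin{equation*}
\TC(X) \to \operatorname{map}_{\operatorname{Sp}^{B\mathbb{T}}}(\mathbb{S},X) \rightrightarrows \prod_{p \in \mathbb{P}} \operatorname{map}_{\operatorname{Sp}^{B\mathbb{T}}}(\mathbb{S},X^{tC_p}),
\end{equation*}
whose two arrows send a $\mathbb{T}$-equivariant map $f\colon\mathbb{S}\to X$ to $\varphi_p^X\circ f$ and to $f^{tC_p}\circ\varphi_p^{\mathbb{S}^{\operatorname{triv}}}$, respectively.

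Next, I would identify each term by adjunction: the source is $X^{h\mathbb{T}}=\TC^{-}(X)$, and each factor in the target is $(X^{tC_p})^{h\mathbb{T}}$. The first parallel arrow is then visibly the map $(\varphi_p^{h\mathbb{T}})_p$ displayed in the paragraph preceding the proposition. For the second arrow, I would use that the Frobenius $\varphi_p^{\mathbb{S}^{\operatorname{triv}}}$ is by construction the $\mathbb{T}$-equivariant refinement of the composite $\mathbb{S}\xrightarrow{\operatorname{triv}}\mathbb{S}^{hC_p}\xrightarrow{\operatorname{can}}\mathbb{S}^{tC_p}$. Passing to $\mathbb{T}$-homotopy fixed points and using naturality, this arrow factors as
\begin{equation*}
\TC^{-}(X)=X^{h\mathbb{T}}\xrightarrow{\operatorname{can}}X^{t\mathbb{T}} \longrightarrow \prod_{p\in\mathbb{P}}(X^{tC_p})^{h\mathbb{T}},
\end{equation*}
where the second arrow is the canonical comparison recalled in the excerpt.

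Finally, because $X$ is assumed bounded below, the Tate orbit lemma makes the canonical map $X^{t\mathbb{T}}\to\prod_p (X^{tC_p})^{h\mathbb{T}}$ into a profinite completion, so the right-hand term of the equalizer is identified with $\TP(X)^{\wedge}$ and the two parallel maps are identified with $\varphi$ and $\operatorname{can}$ exactly as named before the proposition statement. I expect the main obstacle to be the identification of the second parallel arrow with $\operatorname{can}$, since this requires carefully unpacking the $\mathbb{T}$-equivariant structure built into $\varphi_p^{\mathbb{S}^{\operatorname{triv}}}$ and matching it against the canonical map used in the Tate orbit lemma; the rest is formal manipulation of the pullback square.
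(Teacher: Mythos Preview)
Your proposal is correct and follows essentially the same approach as the paper: the paragraph immediately preceding the proposition is precisely this unpacking of Definition~\ref{def:tc} via the pullback description of $\operatorname{CycSp}$, the identification of the two parallel maps as $(\varphi_p^{h\mathbb{T}})$ and the canonical map, and the invocation of the Tate orbit lemma to replace $\prod_p (X^{tC_p})^{h\mathbb{T}}$ by $\TP(X)^{\wedge}$ in the bounded-below case. The paper states the proposition as the conclusion of that discussion rather than giving a separate proof environment, and the details you supply (the equalizer coming from mapping spectra in the pullback, and the identification of the second arrow via the description of $\varphi_p^{\mathbb{S}^{\operatorname{triv}}}$ as $\operatorname{can}\circ\operatorname{triv}$) are exactly those spelled out in~\cite[Proposition~II.1.9]{nikolausscholze}, to which the paper is implicitly deferring.
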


We now explain the definition of the cyclotomic trace map from
$K$-theory to topological cyclic
homology.\index{cyclotomic trace map} Let
$\operatorname{Cat}_{\infty}^{\operatorname{stab}}$ be the 
$\infty$-category of small, stable $\infty$-categories and 
exact functors. The $\infty$-category of noncommutative
motives\index{noncommutative motives} (or a
slight variant thereof) of
Blumberg--Gepner--Tabuada~\cite{blumberggepnertabuada} is defined to
be the initial (large) $\infty$-category with a functor
$$\xymatrix{
z : { \operatorname{Cat}_{\infty}^{\operatorname{stab}}} \ar[r] &
{ \operatorname{NMot} } \cr
}$$
such that the following hold:
\begin{enumerate}
\item[(1)]\label{eins}(Stability)
The $\infty$-category $\operatorname{NMot}$ is stable.
\item[(2)]\label{zwei}(Localization)
For every Verdier sequence\footnote{\,This
  means that $\mathcal{C} \to \mathcal{D} \to \mathcal{C}/\mathcal{D}$
  is both a fiber sequence and cofiber sequence in
  $\operatorname{Cat}_\infty^{\operatorname{stab}}$. In particular,
  $\mathcal{D} \to \mathcal{C}$ is fully faithful and its image in
  $\mathcal{D}$ is closed under retracts.}
$\mathcal{C} \to \mathcal{D} \to \mathcal{D}/\mathcal{C}$ in
$\operatorname{Cat}_{\infty}^{\operatorname{stab}}$, the image sequence
$z(\mathcal{C}) \to z(\mathcal{D}) \to z(\mathcal{D}/\mathcal{C})$
in $\operatorname{NMot}$ is a fiber sequence.
\item[(3)]\label{drei}(Morita invariance)
For every map $\mathcal{C} \to \mathcal{D}$ in
$\operatorname{Cat}_{\infty}^{\operatorname{stab}}$ that becomes an
equivalence after idempotent completion, the image map
$z(\mathcal{C}) \to z(\mathcal{D})$ in $\operatorname{NMot}$ is an
equivalence.
\end{enumerate}
The main theorem of
op.~cit.~states\footnote{\,In fact, we do not require $z \colon
\operatorname{Cat}_{\infty}^{\operatorname{stab}} \to
\operatorname{NMot}$ to preserve filtered colimits, as
do~\cite{blumberggepnertabuada}.} that for every (small) stable
$\infty$-category $\mathcal{C}$, there is a canonical equivalence
$$K(\mathcal{C}) \simeq
\operatorname{map}_{\operatorname{NMot}}(z(\operatorname{Perf}_{\mathbb{S}}),z(\mathcal{C}))$$
between its nonconnective algebraic $K$-theory spectrum and the indicated
mapping spectrum in $\operatorname{NMot}$. In general, one may
view the mapping spectra in $\operatorname{NMot}$ as bivariant
versions of nonconnective algebraic $K$-theory. Accordingly, the
mapping spectra in  $\operatorname{CycSp}$ are bivariant versions of
$\TC$.\index{algebraic $K$-theory!of stable $\infty$-category} As we
outlined in the previous section, topological Hochschild homology 
is a functor
$$\begin{xy}
(0,0)*+{ \operatorname{Cat}_{\infty}^{\operatorname{stab}} }="1";
(25,0)*+{ \operatorname{CycSp}, }="2";
{ \ar^-{\THH} "2";"1";};
\end{xy}$$
and one can show that it satisfies the properties~(1)--(3) above. There is
a very elegant proof of~(2) and~(3) based on work of Keller,
Blumberg--Mandell, and Kaledin that uses the trace property of
$\THH$, see the forthcoming  paper~\cite{nikolaus1} for a summary.
\index{topological Hochschild homology!trace property of}
Accordingly, the functor $\THH$ admits a unique factorization
$$\xymatrix{
{ \operatorname{Cat}_{\infty}^{\operatorname{stab}} } \ar[r]^-{z} &
{ \operatorname{NMot} } \ar[r]^-{\tr} &
{ \operatorname{CycSp} }
}$$
with $\tr$ exact. In particular, for every stable $\infty$-category
$\mathcal{C}$, we have an induced map of mapping spectra
$$\xymatrix{
{
  \operatorname{map}_{\operatorname{NMot}}(z(\operatorname{Perf}_{\mathbb{S}}),
  z(\mathcal{C})) } \ar[r]^-{\tr} &
{
  \operatorname{map}_{\operatorname{CycSp}}(\mathbb{S}^{\,\operatorname{triv}},
  \THH(\mathcal{C})). } \cr
}$$
This map, by definition, is the cyclotomic trace map, which we write
$$\xymatrix{
{ K(\mathcal{C}) } \ar[r]^-{\tr} &
{ \TC(\mathcal{C}). } \cr
}$$
More concretely, on connective covers, considered here as
$\mathbb{E}_\infty$-groups in spaces, the cyclotomic trace map is
given by the composition
\index{cyclotomic trace map!on connective covers}
$$\begin{aligned}
{ \Omega^{\infty} K(\mathcal{C}) }
& \simeq \Omega ( \operatorname{colim}_{\Delta^{\operatorname{op}}}
(S\mathcal{C}^{\operatorname{idem}})^{\simeq} ) 
\to \Omega ( \operatorname{colim}_{\Delta^{\operatorname{op}}}
\Omega^{\infty}\TC( S\mathcal{C}^{\operatorname{idem}}) ) \cr
{} & \to \Omega^{\infty}\Omega (
\operatorname{colim}_{\Delta^{\operatorname{op}}} \TC(
S\mathcal{C}^{\operatorname{idem}} ) )
\simeq \Omega^{\infty} \TC(\mathcal{C}), \cr
\end{aligned}$$
where $S(-)$ and $(-)^{\mathrm{idem}}$ indicate Waldhausen's
construction and idempotent completion, respectively, where the
second map is induced from the map   
$$\xymatrix{
{ \mathcal{D}^{\simeq} \simeq
\operatorname{Map}_{\operatorname{Cat}_{\infty}^{\operatorname{stab}}}(
\operatorname{Perf}_{\mathbb{S}} ,\mathcal{D}) } \ar[r]^-{\THH} &
{ \operatorname{Map}_{\operatorname{CycSp}}(
\mathbb{S}^{\operatorname{triv}}, \THH(\mathcal{D}) ) 
\simeq \Omega^\infty\TC(\mathcal{D}), } \cr
}$$
and where last equivalence follows from $\TC$ satisfying (2) and~(3) above. 
\index{topological cyclic homology|)}

\subsection{Connes' operator}\label{sec:connesoperator}
\index{Connes' operator}

The symmetric monoidal $\infty$-category of spectra with
$\mathbb{T}$-action is canonically equivalent to the symmetric
monoidal $\infty$-category of modules over the group algebra
$\mathbb{S}[\mathbb{T}]$. The latter is an
$\mathbb{E}_{\infty}$-algebra in spectra, and
$$\pi_*(\mathbb{S}[\mathbb{T}]) = (\pi_*\mathbb{S})[d]/(d^2 - \eta d),$$
where $d$ has degree $1$ and is obtained from a choice of generator
of $\pi_1(\mathbb{T})$ by translating it to the basepoint in the group
ring. The relation $d^2 =\eta d$ is a consequence of the fact that,
stably, the multiplication map
$\mu \colon \mathbb{T} \times \mathbb{T} \to \mathbb{T}$ splits off
the Hopf map $\eta \in \pi_1(\mathbb{S})$. From this calculation we
conclude that a $\mathbb{T}$-action on an
$\mathbb{E}_{\infty}$-algebra in spectra $T$ gives rise to a graded
derivation
$$\xymatrix{
{ \pi_j(T) } \ar[r]^-{d} &
{ \pi_{j+1}(T), } \cr
}$$
and this is Connes' operator. The operator $d$ is not quite a
differential, since we have $d \circ d = d \circ \eta = \eta \circ d$.

The $\mathbb{E}_{\infty}$-algebra structure on $T$ gives rise to power
operations in homology. In singular homology with
$\mathbb{F}_2$-coefficients, there are power operations
\index{power operations!Araki--Kudo}
$$\xymatrix{
{ \pi_j(\mathbb{F}_2 \otimes T) } \ar[r]^-{Q^i} &
{ \pi_{i+j}(\mathbb{F}_2 \otimes T) } \cr
}$$
for all integers $i$ introduced by Araki--Kudo~\cite{arakikudo}, and
in singular homology with $\mathbb{F}_p$-coefficients, where $p$ is
odd, there are similar power operations
\index{power operations!Dyer--Lashof}
$$\xymatrix{
{ \pi_j(\mathbb{F}_p \otimes T) } \ar[r]^-{Q^i} &
{ \pi_{2i(p-1)+j}(\mathbb{F}_p \otimes T) } \cr
}$$
for all integers $i$ defined by Dyer--Lashof~\cite{dyerlashof}. The
power operations are natural with respect to maps of
$\mathbb{E}_{\infty}$-rings, but it is not immediately clear that they
are compatible with Connes' operator, too. We give a proof that this
is the nevertheless the case, following 
Angeltveit--Rognes~\cite[Proposition~5.9]{angeltveitrognes} and the
very nice exposition of H\"{o}ning~\cite{honing}.

\begin{proposition}\label{prop:connesdyerlashof}
\index{Connes' operator!and power operations}
If $T$ is an $\mathbb{E}_{\infty}$-ring with $\mathbb{T}$-action, then
$$Q^i \circ d = d \circ Q^i$$
for all integers $i$. 
\end{proposition}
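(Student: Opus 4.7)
The plan is to follow the Angeltveit--Rognes and H\"{o}ning approach: reinterpret the $\mathbb{T}$-action on $T$ as a map of $\mathbb{E}_\infty$-algebras into a convolution mapping spectrum, and then extract the commutation from the naturality of the Dyer--Lashof operations together with the Cartan formula.

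First I would observe that, since $\mathbb{T}$ is abelian, the spectrum $\mathbb{T}_+ = \Sigma^\infty_+\mathbb{T}$ is a cocommutative $\mathbb{E}_\infty$-coalgebra via the diagonal, and hence the function spectrum $F(\mathbb{T}_+,T)$ inherits a convolution $\mathbb{E}_\infty$-algebra structure. Since $\mathbb{T}$ acts on $T$ by $\mathbb{E}_\infty$-algebra maps, the adjoint of the action promotes to a map of $\mathbb{E}_\infty$-algebras $\alpha\colon T \to F(\mathbb{T}_+,T)$. Using the stable splitting $\mathbb{T}_+\simeq \mathbb{S}\oplus\Sigma\mathbb{S}$ together with the primitivity of the fundamental class of $\mathbb{T}$ (equivalently, the nullhomotopy of the reduced diagonal $\mathbb{T}\to\mathbb{T}\wedge\mathbb{T}$), one obtains a ring isomorphism
$$\pi_*(\mathbb{F}_p\otimes F(\mathbb{T}_+,T))\cong \pi_*(\mathbb{F}_p\otimes T)[\sigma]/(\sigma^2)$$
for an exterior generator $\sigma$, under which $\alpha_*$ sends $x$ to $x + d(x)\sigma$, with $d$ denoting Connes' operator.

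The next step is to apply the naturality of $Q^i$ to the $\mathbb{E}_\infty$-map $\alpha$, giving
$$Q^i(x) + d(Q^i(x))\sigma \;=\; \alpha_*(Q^i(x)) \;=\; Q^i(\alpha_*(x)) \;=\; Q^i\bigl(x + d(x)\sigma\bigr).$$
Expanding the right-hand side using the $\mathbb{F}_p$-linearity of $Q^i$ and the Cartan formula, and observing that, by degree considerations---$H^*(\mathbb{T};\mathbb{F}_p)$ being concentrated in only two degrees---the only Dyer--Lashof operation on $\sigma$ that can contribute a nonzero term is the identity $Q^0$, one rewrites the right-hand side as $Q^i(x) + Q^i(d(x))\sigma$. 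Comparing $\sigma$-coefficients yields $d(Q^i(x)) = Q^i(d(x))$, as required.

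The main obstacle is the first step: producing $\alpha$ as an actual map of $\mathbb{E}_\infty$-algebras with respect to the convolution structure on $F(\mathbb{T}_+,T)$. This requires some care with the Day convolution structure on $\operatorname{Sp}^{B\mathbb{T}}$ and the cocommutative Hopf structure on $\mathbb{T}_+$, but is formal once set up. The remaining steps then amount to a routine Cartan-formula computation exploiting the sparse cohomology of $\mathbb{T}$.
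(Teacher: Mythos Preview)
Your proposal is correct and follows essentially the same route as the paper: adjoint the $\mathbb{T}$-action to an $\mathbb{E}_\infty$-map into $\map(\mathbb{T}_+,T)$, read off that it sends $a$ to $a + da\cdot\sigma$, apply naturality of $Q^i$ and the Cartan formula, and compare coefficients.

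One presentational point worth noting: the paper does not work directly in $\map(\mathbb{T}_+,T)$ but first composes with the inverse of the $\mathbb{E}_\infty$-equivalence $T \otimes \map(\mathbb{T}_+,\mathbb{S}) \to \map(\mathbb{T}_+,T)$, so that the class you call $\sigma$ becomes $1 \otimes [S^{-1}]$ with $[S^{-1}]$ living in the homology of the small $\mathbb{E}_\infty$-ring $\map(\mathbb{T}_+,\mathbb{S})$. This makes your ``degree considerations'' step rigorous: $Q^i([S^{-1}])$ is computed in $\pi_*(\mathbb{F}_p \otimes \map(\mathbb{T}_+,\mathbb{S}))$, which is genuinely concentrated in degrees $0$ and $-1$, so the vanishing for $i \neq 0$ is immediate. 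In your formulation, $Q^i(\sigma)$ a priori lives in $\pi_*(\mathbb{F}_p \otimes T)[\sigma]/(\sigma^2)$, which can be nonzero in many degrees, so the degree argument alone does not suffice; you implicitly need that $\sigma$ is in the image of the unit $\map(\mathbb{T}_+,\mathbb{S}) \to \map(\mathbb{T}_+,T)$. This is not a gap, just something to make explicit.
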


\begin{proof}The adjunct
$\tilde{\mu} \colon T \to \map(\mathbb{T}_+,T)$ of the
map $\mu \colon T \otimes \mathbb{T}_+ \to T$ induced by the
$\mathbb{T}$-action on $T$ is a map of $\mathbb{E}_{\infty}$-rings, as
is the canonical equivalence $T \otimes \map(\mathbb{T}_+,\mathbb{S})
\to \map(\mathbb{T}_+,T)$. Composing the former map with an
inverse of the latter map, we obtain a map of $\mathbb{E}_{\infty}$-rings
$$\xymatrix{
{ T } \ar[r]^-{\tilde{d}} &
{ T \otimes \map(\mathbb{T}_+,\mathbb{S}), } \cr
}$$
and hence, the induced map on homology preserves power operations. We
identify the homology of the target via the isomorphism
$$\xymatrix{
{ \pi_*(\mathbb{F}_p \otimes T) \otimes_{\pi_*(\mathbb{F}_p)}
  \pi_*(\mathbb{F}_p \otimes \map(\mathbb{T}_+,\mathbb{S})) } \ar[r]
&
{ \pi_*(\mathbb{F}_p \otimes T \otimes \map(\mathbb{T}_+,\mathbb{S})) } \cr
}$$
and the direct sum decomposition of $\map(\mathbb{T}_+,\mathbb{S})$
induced by the direct sum decomposition above, and under this
idenfication, we have
$$\tilde{d}(a) = a \otimes 1 + da \otimes [S^{-1}].$$
Now, the Cartan formula for power operations shows that
$$Q^i(\tilde{d}(a)) = Q^i(a \otimes 1 + da \otimes [S^{-1}])
= Q^i(a) \otimes 1 + Q^i(da) \otimes [S^{-1}],$$
since $Q^0([S^{-1}]) = [S^{-1}]$ and $Q^i([S^{-1}]) = 0$ for
$i \neq 0$. But we also have
$$Q^i(\tilde{d}(a)) = \tilde{d}(Q^i(a)) = Q^i(a) \otimes 1 + d(Q^i(a))
\otimes [S^{-1}],$$
so we conclude that $Q^i \circ d = d \circ Q^i$ as stated.
\end{proof}

We finally discuss the HKR-filtration. If $k$ is a commutative ring
and $A$ a simplicial commutative $k$-algebra, then the Hochschild
spectrum\index{Hochschild homology}
\index{Hochschild--Kostant--Rosenberg filtration}
$$\HH(A/k) = A^{\otimes_k \mathbb{T}}$$
has a complete and $\mathbb{T}$-equivariant descending filtration
$$\cdots \subseteq \Fil^n\HH(A/k) \subseteq \cdots \subseteq \Fil^1\HH(A/k)
\subseteq \Fil^0\HH(A/k) \simeq \HH(A/k)$$
defined as follows. If $A/k$ is smooth and discrete, then
$$\Fil^n\HH(A/k) = \tau_{\geq n}\HH(A/k),$$
and in general, the filtration is obtained from this special case by
left Kan extension. The filtration quotients are identified
as follows. If $A/k$ is discrete, then $\HH(A/k)$ may be represented
by a simplicial commutative $A$-algebra, and hence, its homotopy groups
$\HH_*(A/k)$ form a strictly\footnote{\,Here ``strictly'' indicates
that elements of odd degree square to zero. This follows
from~\cite[Th\'{e}or\`{e}me~4]{cartan} by considering the universal
case of Eilenberg--MacLane spaces.}anticommutative graded
$A$-algebra. Moreover, Connes' 
operator gives rise to a differential on $\HH_*(A/k)$, which raises
degrees by one and is a graded $k$-linear derivation. By definition,
the de~Rham-complex $\Omega_{A/k}^*$ is the universal example of this
algebraic structure, and therefore, we have a canonical map 
$$\xymatrix{
{ \Omega_{A/k}^* } \ar[r] &
{ \HH_*(A/k),} \cr
}$$
which, by the Hochschild--Kostant--Rosenberg theorem, is an
isomorphism, if $A/k$ smooth. By the definition of the cotangent
complex, this shows that
$$\gr^j\HH(A/k) \simeq (\Lambda_A^j L_{A/k})[j]$$
with trivial $\mathbb{T}$-action. Here $\Lambda^j_A$ indicates the
non-abelian derived functor of the $j$th exterior power over $A$. 

\section{B\"{o}kstedt periodicity}\label{sec:bokstedtperiodicity}

B\"{o}kstedt periodicity is the fundamental result that
$\THH_*(\mathbb{F}_p)$ is a polynomial algebra over $\mathbb{F}_p$ on
a generator in degree two. We present a proof, which is close to
B\"{o}kstedt's original proof in the unpublished
manuscript~\cite{bokstedt1}. The skeleton filtration of the standard
simplicial model for the circle induces a filtration
of the topological Hochschild spectrum. For every homology theory, this
gives rise to a spectral sequence, called the B\"{o}kstedt spectral
sequence, that converges to the homology of the topological Hochschild
spectrum. It is a spectral sequence of Hopf algebras in the symmetric monoidal
category of quasi-coherent sheaves on the stack defined by the
homology theory in question, and  to handle this rich algebraic
structure, we find it useful to introduce the geometric language of
Berthelot and Grothendieck~\cite[II.1]{berthelot}.

\subsection{The Adams spectral sequence}
\index{Adams spectral sequence|(}
If $f \colon A \to B$ is a map of anticommutative graded rings, then
extension of scalars\index{extension of scalars} along $f$ and
restriction of scalars\index{restriction of scalars} along $f$ define
adjoint functors
$$\xymatrix{
{ \operatorname{Mod}_A } \ar@<.7ex>[r]^-(.53){f^*} &
{ \operatorname{Mod}_B } \ar@<.7ex>[l]^-(.47){f_*} \cr
}$$
between the respective categories of graded modules. Moreover, the
extension of scalars functor $f^*$ is symmetric monoidal, while the
restriction of scalars functor $f_*$ is lax symmetric monoidal with
respect to the tensor product of graded modules.

We let $k$ be an $\mathbb{E}_{\infty}$-ring and form the
cosimplicial $\mathbb{E}_{\infty}$-ring
$$\begin{xy}
(0,0)*+{ {}_{\phantom{\mathbb{E}_{\infty}}} \Delta }="1";
(30,0)*+{ \operatorname{Alg}_{\mathbb{E}_{\infty}}(\mathsf{Sp}).
  {}_{\phantom{\mathbb{E}_{\infty}}}  }="2";
{ \ar^-{k^{\otimes[-]}} "2";"1";};
\end{xy}$$
Here, as usual, $[n] \in \Delta$ denotes the finite ordinal
$\{0,1,\dots,n\}$, so $k^{\otimes [n]}$ is an $(n+1)$-fold tensor
product. We will assume 
that the map
$$\xymatrix{
{ A = \pi_*(k^{\otimes[0]}) } \ar[r]^-{d^1} &
{ \pi_*(k^{\otimes[1]}) = B } \cr
}$$
is flat, so that
$d^{\hskip.5pt 0},d^{\hskip.5pt 2} \colon k^{\otimes [1]} \to k^{\otimes [2]}$
induce an isomorphism of graded rings
$$\xymatrix{
{ B \otimes_A \hskip-1pt B = \pi_*(k^{\otimes [1]})
  \otimes_{\pi_*(k^{\otimes [0]})}
  \pi_*(k^{\otimes [1]}) } \ar[r]^-{d^{\hskip.5pt 2}+d^{\hskip.5pt 0}} &
{ \pi_*(k^{\otimes [2]}). } \cr
}$$
The map $d^1 \colon k^{\otimes [1]} \to k^{\otimes [2]}$ now gives
rise to a map of graded rings
$$\xymatrix{
{ B } \ar[r]^-{d^1} &
{ B \otimes_AB }
}$$
and the sextuple
$$(A,B, \xymatrix{
{ B } \ar[r]^-{s^0} &
{ A } \cr
}, 
\xymatrix{
{ A } \ar@<.7ex>[r]^-{d^{\hskip.5pt 0}} \ar@<-.7ex>[r]_-{d^1} &
{ B } \cr
},
\xymatrix{
{B } \ar[r]^-{d^1} &
{ B \otimes_A \hskip-1pt B} \cr
})$$
forms a cocategory object in the category of graded rings with the
cocartesian symmetric monoidal structure. Here the maps
$s^0 \colon A \to B$, $d^{\hskip.5pt 0},d^1 \colon A \to B$, and
$d^1 \colon B \to B \otimes_AB$ are the opposites of the unit map, the
source and target maps, and the composition map. Likewise, the
septuple, where we also include the map $\chi \colon B \to B$ induced
by the unique non-identity automorphism of the set $[1] = \{0,1\}$,
forms a
cogroupoid\index{cogroupoid}\index{cogroupoid!associated with
$\mathbb{E}_{\infty}$-ring} in this symmetric monoidal category. We
will abbreviate and simply write $(A,B)$ for this cogroupoid object.

In general, given a cogroupoid object $(A,B)$ in graded rings, we define
an\index{cogroupoid module}
$(A,B)$-module\footnote{\,The cogroupoid $(A,B)$ defines a stack
$\mathcal{X}$, and the categories of $(A,B)$-modules and
quasi-coherent $\mathcal{O}_{\mathcal{X}}$-modules are
equivalent. For this reason, we prefer to say $(A,B)$-module instead
of $(A,B)$-comodule, as is more common in the homotopy theory
literature.} to be a pair $(M,\epsilon)$ of an $A$-module $M$ and a
$B$-linear map
$$\xymatrix{
{ d^{1*}(M) } \ar[r]^-{\epsilon} &
{ d^{\hskip.5pt 0*}(M) } \cr
}$$
that makes the following diagrams, in which the equality signs
indicate the unique isomorphisms, commute.
$$\begin{xy}
(0,5)*+{ s^{0*}d^{1*}(M) }="11";
(32,5)*+{ s^{0*}d^{\hskip.5pt 0*} (M) }="13";
(16,-5)*+{ M }="22";
{ \ar^-{s^{0*}(\epsilon)} "13";"11";};
{ \ar@{=} "22";"11";};
{ \ar@{=} "22";"13";};
\end{xy}$$
$$\begin{xy}
(13,12)*+{ d^{1*}d^{1*}(M) }="11";
(48,12)*+{ d^{1*}d^{\hskip.5pt 0*}(M) }="12";
 (0,0)*+{ (B \otimes d^1)^*d^{1*}(M) }="21";
(61,0)*+{ (d^{\hskip.5pt 0} \otimes B)^*d^{\hskip.5pt 0*}(M) }="22";
(13,-12)*+{ (B \otimes d^1)^*d^{\hskip.5pt 0*}(M) }="31";
(48,-12)*+{ (d^{\hskip.5pt 0} \otimes B)^*d^{1*}(M) }="32";
{ \ar^-{d^{1*}(\epsilon)} "12";"11";};
{ \ar_-(.1){(B \otimes d^1)^*(\epsilon)} "31";"21";};
{ \ar_-(.9){(d^{\hskip.5pt 0} \otimes B)^*(\epsilon)} "22";"32";};
{ \ar@{=} "11";"21";};
{ \ar@{=} "22";"12";};
{ \ar@{=} "32";"31";};
\end{xy}$$
We say that $\epsilon$ is a stratification of the $A$-module
$M$ relative to the cogroupoid $(A,B)$.\index{stratification!relative
to cogroupoid} The map $\epsilon$, we
remark, is necessarily an isomorphism. We define a map of
$(A,B)$-modules $f \colon (M_0,\epsilon_0) \to (M_1,\epsilon_1)$ to be
a map of $A$-modules $f \colon M_0 \to M_1$ that makes the diagram of
$B$-modules
$$\xymatrix{
{ d^{1*}(M_0) } \ar[r]^-{\epsilon_0}
\ar[d]^-{d^{\hskip.5pt 0*}(f)} &
{ d^{\hskip.5pt 0*} (M_0) } \ar[d]^-{d^{1*}(f)} \cr
{ d^{\hskip.5pt 0*} (M_1) } \ar[r]^-{\epsilon_1} &
{ d^{1*} (M_1) } \cr
}$$
commute. In this case, we also say that the $A$-linear map
$f \colon M_0 \to M_1$ is horizontal with respect 
$\epsilon$. The category $\operatorname{Mod}_{(A,B)}$ of
$(A,B)$-modules admits a symmetric monoidal structure with the
monoidal product defined
by\index{symmetric monoidal product!of modules over cogroupoid}
$$(M_1,\epsilon_1) \otimes_{(A,B)} (M_2,\epsilon_2) = (M_1 \otimes_A
M_2, \epsilon_{12}),$$ where $\epsilon_{12}$ is the
unique map that makes the diagram
$$\xymatrix{
{ d^{1*}(M_1) \otimes_B d^{1*}(M_2) }\ar@{=}[r]
\ar[d]^-{\epsilon_1 \otimes \epsilon_2} &
{ d^{1*}(M_1 \otimes_A M_2) } 
\ar[d]^-{\epsilon_{12}} \cr
{ d^{\hskip.5pt 0*}(M_1) \otimes_B d^{\hskip.5pt 0*}(M_2) } \ar@{=}[r] &
{ d^{\hskip.5pt 0*}(M_1 \otimes_A M_2) } \cr
}$$
commute. The unit for the monoidal product is given by the
$A$-module $A$ with its unique structure of $(A,B)$-module, where
$\epsilon \colon d^{1*}(A) \to d^{\hskip.5pt 0*}(A)$ is the unique
$B$-linear map that makes the diagram
$$\begin{xy}
(0,5)*+{ s^{0*}d^{1*}(A) }="11";
(32,5)*+{ s^{0*}d^{\hskip.5pt 0*}(A) }="13";
(16,-5)*+{ A }="22";
{ \ar^-{s^{0*}(\epsilon)} "13";"11";};
{ \ar@{=} "22";"11";};
{ \ar@{=} "22";"13";};
\end{xy}$$
commute. \goodbreak

We again let $(A,B)$ be the cogroupoid associated with the
$\mathbb{E}_{\infty}$-ring $k$.\index{cogroupoid!associated with
$\mathbb{E}_{\infty}$-ring} For every spectrum $X$, we consider 
the cosimplicial spectrum $k^{\otimes [-]} \otimes X$. The homotopy
groups $\pi_*(k^{\otimes [0]} \otimes X)$ and
$\pi_*(k^{\otimes [1]} \otimes X)$ form a left $A$-module and a left
$B$-module, respectively. Moreover, we have $A$-linear maps
$$\xymatrix{
{ \pi_*(k^{\otimes [0]} \otimes X) } \ar[r]^-{d^i} &
{ d_*^i(\pi_*(k^{\otimes [1]} \otimes X)) } \cr
}$$
induced by $d^i \colon k^{\otimes [0]} \otimes X \to
k^{\otimes [1]} \otimes X$, and their adjunct maps
$$\xymatrix{
{ d^{i*}(\pi_*(k^{\otimes[0]} \otimes X)) } \ar[r]^-{\widetilde{d^i}} &
{ \pi_*(k^{\otimes [1]} \otimes X) } \cr
}$$
are $B$-linear isomorphisms. We now define the $(A,B)$-module
associated with the spectrum $X$ to be the pair $(M,\epsilon)$ with
\index{cogroupoid module!associated with spectrum}
$$M = \pi_*(k^{\otimes[0]} \otimes X)$$
and with $\epsilon$ the unique map that makes the following
diagram commute.
$$\begin{xy}
(0,6)*+{ d^{1*}(M) }="11";
(32,6)*+{ d^{\hskip.5pt 0*}(M) }="13";
(16,-6)*+{ \pi_*(k^{\otimes[1]} \otimes X) }="22";
{ \ar^-{\epsilon} "13";"11";};
{ \ar_-(.1){\widetilde{d^1}} "22";"11";};
{ \ar^-(.1){\widetilde{d^{\hskip.5pt 0}}} "22";"13";};
\end{xy}$$
We often abbreviate and write $\pi_*(k \otimes X)$ for the
$(A,B)$-module $(M,\epsilon)$. \goodbreak

The skeleton filtration of the cosimplicial spectrum
$k^{\otimes [-]} \otimes X$ gives rise to the conditionally convergent
$k$-based Adams spectral
sequence\index{Adams spectral sequence!$k$-based}
$$\textstyle{ E_{i,j}^2 = \Ext_{(\pi_*(k),\pi_*(k \otimes
  k))}^{-i}(\pi_*(k),\pi_*(k \otimes X))_j \Rightarrow
\pi_{i+j}(\lim_{\Delta} k^{\otimes[-]} \otimes X), }$$
where the $\Ext$-groups are calculated in the abelian category of
modules over the cogroupoid $(\pi_*(k),\pi_*(k \otimes k))$. An
$\mathbb{E}_{\infty}$-algebra structure on $X$ gives rise to a
commutative monoid structure on $\pi_*(k \otimes X)$ in the
symmetric monoidal category of
$(\pi_*(k),\pi_*(k \otimes k))$-modules and makes the spectral
sequence one of bigraded rings.

If $X$ is a $k$-module, then the augmented cosimplicial spectrum
$$\xymatrix{
{ X } \ar[r]^-{d^{\hskip.7pt 0}} &
{ k^{\otimes [-]} \otimes  X } \cr
}$$
acquires a nullhomotopy. Therefore, the spectral sequence collapses
and its edge homomorphism becomes an isomorphism
$$\xymatrix{
{ \pi_j(X) } \ar[r] &
{ \Hom_{(\pi_*(k),\pi_*(k \otimes k))}(\pi_*(k),\pi_*(k \otimes X))_j. } \cr
}$$
This identifies $\pi_j(X)$ with the subgroup of elements
$x \in \pi_j(k \otimes X)$ that are horizontal\footnote{\,In comodule
nomenclature, horizontal elements are called primitive elements.} with
respect to the stratification relative to
$(\pi_*(k),\pi_*(k \otimes k))$ in the sense that 
$\epsilon(1 \otimes_{A,d^1} x) = 1 \otimes_{A,d^{\hskip.7pt 0}} x$.
\index{horizontal!with respect to stratification}
\index{Adams spectral sequence|)}

\subsection{The B\"{o}kstedt spectral sequence}
\index{B\"{o}kstedt periodicity|(}
In general, if $R$ is an $\mathbb{E}_{\infty}$-ring, then
$$\THH(R) \simeq 
R^{\otimes \colim_{\Delta^{\op}} \Delta^1[-]/\partial\Delta^1[-]} \simeq
\colim_{\Delta^{\op}} R^{\otimes \Delta^1[-]/\partial\Delta^1[-]}.$$
A priori, the right-hand term is the colimit in
$\operatorname{Alg}_{\mathbb{E}_{\infty}}(\mathsf{Sp})$, but since the
index category $\Delta^{\op}$ is sifted~\cite[Lemma~5.5.8.4]{HTT},
the colimit agrees with the one in $\mathsf{Sp}$. The increasing
filtration of $\Delta^1[-]/\partial\Delta^1[-]$ by the skeleta induces
an increasing filtration of $\THH(R)$,
$$\Fil_0\THH(R) \to \Fil_1\THH(R) \to \cdots \to \Fil_n\THH(R) \to \cdots.$$
We let $k$ be an $\mathbb{E}_{\infty}$-ring and let $(A,B)$ be the
associated cogroupoid in graded rings, where $A = \pi_*(k)$ and
$B = \pi_*(k \otimes k)$. We also let $C$ be the commutative monoid
$\pi_*(k \otimes R)$ in the symmetric monoidal category of
$(A,B)$-modules. Here we assume that $d^{\hskip.7pt 0} \colon A \to B$
and $\eta \colon A \to C$ are flat. The filtration above gives rise to
a spectral sequence\index{B\"{o}kstedt spectral sequence}
$$E_{i,j}^2 = \HH_i(C/A)_j\Rightarrow
\pi_{i+j}(\hskip.5pt k \otimes \THH(R)),$$
called the B\"{o}kstedt spectral sequence. 
It is a spectral sequence of $C$-algebras in the symmetric monoidal
category of $(A,B)$-modules, and Connes' operator on
$\pi_*(k \otimes \THH(R))$ induces a map
$$\xymatrix{
{ E_{i,j}^r } \ar[r]^-{d} &
{ E_{i+1,j}^r } \cr
}$$
of spectral sequences, which, on the $E^2$-term, is equal to Connes' operator
$$\xymatrix{
{ \HH_i(C/A)_j } \ar[r]^-{d} &
{ \HH_{i+1}(C/A)_j. } \cr
}$$
In particular, if $y \in E_{i,j}^2$ and $dy \in E_{i+1,j}^2$ both
survive the spectral sequence, and if $y$ represents a homology class
$\tilde{y} \in \pi_{i+j}(k \otimes \THH(R))$, then $dy$ represents the
homology class $d\tilde{y} \in \pi_{i+j+1}(k \otimes \THH(R))$.

\begin{theorem}[B\"{o}kstedt]\label{thm:bokstedtperiodicity}The
canonical map of graded $\mathbb{F}_p$-algebras
$$\xymatrix{
{ \operatorname{Sym}_{\hskip.5pt\mathbb{F}_p}(\THH_2(\mathbb{F}_p)) } \ar[r] &
{ \THH_*(\mathbb{F}_p) } \cr
}$$
is an isomorphism, and $\THH_2(\mathbb{F}_p)$ is a $1$-dimensional
$\mathbb{F}_p$-vector space.
\end{theorem}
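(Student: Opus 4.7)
The plan is to apply the B\"{o}kstedt spectral sequence with $k = R = \mathbb{F}_p$. With $A = \pi_*\mathbb{F}_p = \mathbb{F}_p$ and $C = \pi_*(\mathbb{F}_p \otimes \mathbb{F}_p) = \mathcal{A}_*$ the Milnor dual Steenrod algebra, which is flat over $\mathbb{F}_p$, the spectral sequence takes the form
$$E_{i,j}^2 = \HH_i(\mathcal{A}_*/\mathbb{F}_p)_j \Rightarrow H_{i+j}(\THH(\mathbb{F}_p);\mathbb{F}_p).$$
Since $\THH(\mathbb{F}_p)$ is an $\mathbb{F}_p$-module spectrum, the $\mathbb{F}_p$-based Adams spectral sequence for $\THH(\mathbb{F}_p)$ collapses, and $\THH_*(\mathbb{F}_p)$ is identified with the horizontal (primitive) elements of $H_*(\THH(\mathbb{F}_p);\mathbb{F}_p)$ as an $(\mathbb{F}_p,\mathcal{A}_*)$-module, in the sense of the Adams spectral sequence discussion above.

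I would first compute $E^2$ explicitly. Using the Milnor presentation $\mathcal{A}_* = \mathbb{F}_2[\xi_1,\xi_2,\dots]$ for $p = 2$ and $\mathcal{A}_* = \mathbb{F}_p[\xi_1,\xi_2,\dots] \otimes \Lambda(\tau_0,\tau_1,\dots)$ for $p$ odd, together with a K\"{u}nneth formula and the standard Hochschild computations $\HH_*(\mathbb{F}_p[\xi]/\mathbb{F}_p) = \mathbb{F}_p[\xi] \otimes \Lambda(\sigma\xi)$ and $\HH_*(\Lambda(\tau)/\mathbb{F}_p) = \Lambda(\tau) \otimes \Gamma(\sigma\tau)$, where $\sigma$ denotes the Hochschild-degree-one class induced by Connes' operator $d$, the page $E^2$ becomes an explicit tensor product of $\mathcal{A}_*$ with exterior algebras on the $\sigma\xi_i$ and, at odd $p$, divided-power algebras on the $\sigma\tau_j$. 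A single class of total degree two is distinguished, namely $\sigma\xi_1$ for $p = 2$ and $\sigma\tau_0$ for $p$ odd, and this is the candidate for the polynomial generator $x$.

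The heart of the argument, and where I expect the main obstacle to lie, is to determine all differentials and hidden multiplicative extensions so that $E^\infty \cong \mathcal{A}_* \otimes \mathbb{F}_p[x]$ with $|x| = 2$. For this I would exploit the compatibility $Q^i \circ d = d \circ Q^i$ of Proposition~\ref{prop:connesdyerlashof}, combined with the Dyer--Lashof formulas in $\mathcal{A}_*$ that express the higher Milnor generators $\xi_{i+1}$ and $\tau_{i+1}$ as power operations applied to $\xi_i$ and $\tau_i$. This propagates Connes' operator, a priori known only on the low-degree Milnor generators, throughout the whole $E^2$-page and forces the classes $\sigma\xi_i$ ($i \geq 2$ for $p = 2$; $i \geq 1$ for $p$ odd) together with the $\sigma\tau_j$ ($j \geq 1$) to be boundaries of differentials originating from suitable divided powers. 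Simultaneously, the top Kudo--Araki--Dyer--Lashof operation, which on a class of degree two produces its $p$-th power, converts the divided-power structure on $\sigma\tau_0$ (respectively the exterior structure on $\sigma\xi_1$) into a genuine polynomial structure on $E^\infty$, eliminating all $p$-torsion relations.

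Once $H_*(\THH(\mathbb{F}_p);\mathbb{F}_p) \cong \mathcal{A}_* \otimes \mathbb{F}_p[x]$ is established as an $(\mathbb{F}_p,\mathcal{A}_*)$-module, the primitives identification from the collapsing Adams spectral sequence yields $\THH_*(\mathbb{F}_p) = \mathbb{F}_p[x]$ with $|x| = 2$ and one-dimensional $\THH_2(\mathbb{F}_p)$, as claimed. The crucial ingredient without which this approach would fail is Proposition~\ref{prop:connesdyerlashof}: it is precisely this bridge between Connes' operator and the Dyer--Lashof operations that allows enough differentials on $E^2$ to be identified to cut the whole $E^\infty$-page down to a polynomial algebra on a single generator in degree two.
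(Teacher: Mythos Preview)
Your overall strategy coincides with the paper's, but you have swapped the roles of differentials and multiplicative extensions, and this swap hides the one genuinely hard step.

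For $p = 2$ the B\"{o}kstedt spectral sequence has \emph{no} nonzero differentials: the $d^r$ are $C$-linear derivations and the exterior generators $d\bar{\xi}_i$ lie in filtration $1$, so they cannot support or receive a differential for degree reasons. Thus $E^{\infty} = E^2 = \Lambda_C\{d\bar{\xi}_i \mid i \geq 1\}$ and none of the $\sigma\xi_i$ are boundaries. What the argument via $Q^i \circ d = d \circ Q^i$ and Steinberger's formula $Q^{2^i}(\bar{\xi}_i) = \bar{\xi}_{i+1}$ actually proves is that the homology class $x^{2^i}$ is \emph{represented} by $d\bar{\xi}_{i+1}$; this is a hidden multiplicative extension converting the exterior associated graded into a polynomial algebra, not a differential.

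For $p$ odd the picture is similar on the $\tau$-side: the classes $d\bar{\tau}_j$ all survive and represent $x^{p^j}$ via the same power-operation argument. They are \emph{not} boundaries. What must be killed are the $d\bar{\xi}_i$ (for $i \geq 1$) and the higher divided powers $(d\bar{\tau}_i)^{[p+s]}$. That $d\bar{\xi}_i$ represents zero follows from the Bockstein: $d\bar{\xi}_i = \beta(d\bar{\tau}_i)$ represents $\beta(x^{p^i}) = 0$. But knowing a class represents zero does not tell you \emph{which} differential kills it, and here your sketch has a real gap. Proposition~\ref{prop:connesdyerlashof} gives you no leverage on the differentials themselves; it only controls multiplicative extensions. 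The paper pins down the differential $d^{p-1}((d\bar{\tau}_i)^{[p]}) = a_i \cdot d\bar{\xi}_{i+1}$ (with $a_i$ a unit) by an entirely different mechanism: the $C$-Hopf algebra structure on $E^r$ forces a shortest nonzero differential to factor through coalgebra primitives and algebra indecomposables, constraining bidegrees severely, and then the $(A,B)$-comodule structure is used to show that $(d\bar{\tau}_j)^{[p]}$ is horizontal, so its image under $d^{p-1}$ must land in the one-dimensional horizontal subspace spanned by $d\bar{\xi}_{j+1}$. Without invoking the Hopf algebra and comodule structure on the spectral sequence, you cannot rule out longer or differently targeted differentials, and your plan as stated would not close.
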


\begin{proof}We let $k = R = \mathbb{F}_p$ and continue to write
$A = \pi_*(k)$, $B = \pi_*(k \otimes k)$, and
$C = \pi_*(k \otimes R)$. We apply the
B\"{o}kstedt spectral sequence to show that, as a $C$-algebra in the
symmetric monoidal category of $(A,B)$-modules,
$$\pi_*(k \otimes \THH(R)) = C[x]$$
on a horizontal generator $x$ of degree $2$, and use the Adams
spectral sequence to conclude that $\THH_*(R) = R[x]$, as
desired. Along the way, we will use the fact, observed by
Angeltveit--Rognes~\cite{angeltveitrognes}, that the maps
$$\begin{xy}
(0,0)*+{ S^1 }="11";
(20,0)*+{ S^1 \vee S^1 }="12";
(40,0)*+{ \{\infty\} }="13";
(58,0)*+{ S^1 }="14";
(74,0)*+{ S^1 }="15";
(90,0)*+{ S^1 }="16";
{ \ar@<.7ex>^-{\psi} "12";"11";};
{ \ar@<.7ex>^-{\phi} "11";"12";};
{ \ar@<.7ex>^-{\eta} "14";"13";};
{ \ar@<.7ex>^-{\varepsilon} "13";"14";};
{ \ar^-{\chi} "16";"15";};
\end{xy}$$
where $\psi$ and $\phi$ are the pinch and fold maps, $\eta$ and
$\varepsilon$ the unique maps, and $\chi$ the flip map, give
$\pi_*(k \otimes \THH(R))$ the structure of a $C$-Hopf algebra
in the symmetric monoidal category of $(A,B)$-modules, assuming that the
unit map is flat. Moreover,
the B\"{o}kstedt spectral sequence is a spectral sequence of $C$-Hopf
algebras, provided that the unit map $C \to E^r$ is flat for all
$r \geq 2$. We remark that the requirement that the comultiplication on
$E = E^r$ be a map of $C$-modules in the symmetric monoidal category
of $(A,B)$-modules is equivalent to the requirement that the diagram
$$\begin{xy}
(0,7)*+{ d^{1*}(E) }="11";
(30,7)*+{ d^{1*}(E \otimes_CE) }="12";
(70,7)*+{ d^{1*}(E) \otimes_{d^{1*}(C)}d^{1*}(E) }="13";
(0,-7)*+{ d^{\hskip.7pt 0*}(E) }="21";
(30,-7)*+{ d^{\hskip.7pt 0*}(E \otimes_CE) }="22";
(70,-7)*+{ d^{\hskip.7pt 0*}(E) \otimes_{d^{\hskip.7pt
      0*}(C)}d^{\hskip.7pt 0*}(E) }="23";
{ \ar^-{d^{1*}(\psi)} "12";"11";};
{ \ar@{=} "13";"12";};
{ \ar^-{d^{0*}(\psi)} "22";"21";};
{ \ar@{=} "23";"22";}; 
{ \ar^-{\epsilon_E} "21";"11";};
{ \ar^-{\epsilon_E \otimes_{\epsilon_C} \epsilon_E} "23";"13";};
\end{xy}$$
commutes. 

To begin, we recall from Milnor~\cite{milnor} that, as a graded
$A$-algebra,
$$C = \begin{cases}
A[\,\bar{\xi}_i \mid i \geq 1\, ] & \text{for $p = 2$,} \cr
A[\,\bar{\xi}_i \mid i \geq 1\, ] \otimes_A
\Lambda_A\{\bar{\tau}_i \mid i \geq 0\} & \text{for $p$ odd,} \cr
\end{cases}$$
where $\bar{\tau}_i = \chi(\tau_i)$ and $\bar{\xi}_i = \chi(\xi_i)$
are the images by the antipode of Milnor's generators $\tau_i$ and
$\xi_i$. Here, for $p = 2$, $\deg(\bar{\xi}_i) = p^i-1$, while for $p$
odd, $\deg(\bar{\xi}_i) = 2(p^i - 1)$ and $\deg(\bar{\tau}_i) =
2p^i-1$. The stratification
$$\xymatrix{
{ d^{1*}(C) } \ar[r]^-{\epsilon} &
{ d^{\hskip.7pt 0*}(C) } \cr
}$$
is given by
$$\begin{aligned}
{} \epsilon(1 \otimes_{A,d^1} \bar{\xi}_i)
& = \textstyle{\sum}\, \bar{\xi}_s \otimes_{A,d^{\hskip.7pt 0}}
\bar{\xi}_t^{\,p^s} \cr
{} \epsilon(1 \otimes_{A,d^1} \bar{\tau}_i)
& = 1 \otimes_{A,d^{\hskip.7pt 0}} \bar{\tau}_i + \textstyle{\sum}\,
\bar{\tau}_s \otimes_{A,d^{\hskip.7pt 0}} \bar{\xi}_t^{\,p^s} \cr
\end{aligned}$$
with the sums indexed by $s,t \geq 0$ with $s+t = i$. Moreover, we
recall from Steinberger~\cite{steinberger} that the power operations
on $C = \pi_*(k \otimes R)$ satisfy
$$\begin{aligned}
Q^{p^i}(\bar{\xi}_i) & = \bar{\xi}_{i+1}, \cr
Q^{p^i}(\bar{\tau}_i) & = \bar{\tau}_{i+1}. \cr
\end{aligned}$$
A very nice brief account of this calculation is given in~\cite{wang}.

We first consider $p = 2$. The $E^2$-term of the B\"{o}kstedt spectral
sequence, as a $C$-Hopf algebra in $(A,B)$-modules, takes the form
$$E^2 = \Lambda_C\{d\bar{\xi}_i \mid i \geq 1\}$$
with $\deg(\bar{\xi}_i) = (0,2^i-1)$ and
$\deg(d\bar{\xi}_i) = (1,2^i-1)$, and all differentials in the
spectral sequence vanish. Indeed, they are $C$-linear derivations and,
for degree reasons, the algebra generators $d\bar{\xi}_i$ cannot
support non-zero differentials. We define $x$ to be the image of
$\bar{\xi}_1$ by the composite map
$$\xymatrix{
{ \pi_1(k \otimes R) } \ar[r]^-{\eta} &
{ \pi_1(k \otimes \THH(R)) } \ar[r]^-{d} &
{ \pi_2(k \otimes \THH(R)), } \cr
}$$
and proceed to show, by induction on $i \geq 0$, that the homology
class $x^{2^i}$ is represented by the element $d\bar{\xi}_{i+1}$ in
the spectral sequence. The case $i = 0$ follows from what was said
above, so we assume the statement has been proved for $i = r-1$ and
prove it for $i=r$. We have
$$x^{2^r} = (x^{2^{r-1}})^2 = Q^{2^r}(x^{2^{r-1}}),$$
which, by induction, is represented by
$Q^{2^r}(d\bar{\xi}_r)$. But Proposition~\ref{prop:connesdyerlashof}
and Steinberger's calculation show that
$$Q^{2^r}(d\bar{\xi}_r) = d(Q^{2^r}(\bar{\xi}_r)) = d\bar{\xi}_{r+1},$$
so we conclude that $x^{2^r}$ is represented by $d\bar{\xi}_{r+1}$.
Hence, as a graded $C$-algebra, 
$$\pi_*(k \otimes \THH(R)) = C[x].$$
Finally, we calculate that
$$\begin{aligned}
{} \epsilon(1 \otimes_{A,d^1} x) 
{} & = \epsilon((\id \otimes_{A,d^1} \, d)(\eta \otimes_{A,d^1}
\eta)(1 \otimes_{A,d^1} \bar{\xi}_1)) \cr
{} & = (\id \otimes_{A,d^{\hskip.7pt 0}}\, d)(\eta \otimes_{A,d^0}
\eta)(\epsilon (1 \otimes_{A,d^1}
\bar{\xi}_1)) \cr
{} & = (\id \otimes_{A,d^{\hskip.7pt 0}}\, d)(\eta \otimes_{A,d^0}
\eta) (\bar{\xi}_1 \otimes_{A,d^{\hskip.7pt 0}} 1 + 1 \otimes_{A,d^{\hskip.7pt 0}}
\bar{\xi}_1) \cr
{} & = 1 \otimes_{A,d^{\hskip.7pt 0}} x, \cr
\end{aligned}$$ 
which shows that the element $x$ is horizontal with respect to the
stratification of $\pi_*(k \otimes \THH(R))$ relative to $(A,B)$. 

We next let $p$ be odd. As a $C$-Hopf algebra,
$$E^2 = \Lambda_C\{d\bar{\xi}_i \mid i \geq 1\} \otimes_C
\Gamma_C\{d\bar{\tau}_i \mid i \geq 0 \}$$
with $\deg(\bar{\xi}_i) = (0,2p^i-2)$, $\deg(d\bar{\xi}_i) =
(1,2p^i-2)$, $\deg(\bar{\tau}_i) = (0,2p^i-1)$, and
$\deg(d\bar{\tau}_i) = (1,2p^i-1)$, and with the coproduct given by
$$\begin{aligned}
\psi(d\bar{\xi}_i) & = 1 \otimes d\bar{\xi}_i + d\bar{\xi}_i \otimes 1, \cr
\psi((d\bar{\tau}_i)^{[r]}) & = \textstyle{\sum}\, (d\bar{\tau}_i)^{[s]} \otimes
(d\bar{\tau}_i)^{[t]}, \cr
\end{aligned}$$
where the sum ranges over $s,t \geq 0$ with $s+t=r$. Here $(-)^{[r]}$
indicates the $r$th divided power. We define $x$ to be the image of
$\bar{\tau}_0$ by the composite map
$$\xymatrix{
{ \pi_1(k \otimes R) } \ar[r]^-{\eta} &
{ \pi_1(k \otimes \THH(R)) } \ar[r]^-{d} &
{ \pi_2(k \otimes \THH(R)) } \cr
}$$
and see, as in the case $p = 2$, that the homology class $x^{p^i}$ is
represented by the element $d\bar{\tau}_i$. This is also shows that
for $i \geq 1$, the element
$$d\bar{\xi}_i = d(\beta(\bar{\tau}_i)) = \beta(d\bar{\tau}_i)$$
represents the homology class $\beta(x^{p^i}) = p^ix^{p^i-1}\beta(x)$,
which is zero. Hence, this element is annihilated by some
differential. We claim that for all $i,s \geq 0$,
$$d^{p-1}((d\bar{\tau}_i)^{[p+s]}) = a_i \cdot d\bar{\xi}_{i+1} \cdot
(d\bar{\tau}_i)^{[s]}$$
with $a_i \in k^{*}$ a unit that depends on $i$ but not on
$s$. Grating this, we find as in the case $p = 2$ that, as a
$C$-algebra,
$$\pi_*(k \otimes \THH(R)) = C[x]$$
with $x$ horizontal of degree $2$, which proves the theorem. 

To prove the claim, we note that a shortest possible non-zero
differential between elements of lowest possible total degree factors
as a composition
$$\begin{xy}
(0,0)*+{ E_{i,j}^r }="1";
(20,0)*+{ QE_{i,j}^r }="2";
(47,0)*+{ PE_{i-r,j+r-1}^r }="3";
(78,0)*+{ E_{i-r,j+r-1}^r, }="4";
{ \ar@<.2ex>^-{\pi} "2";"1";};
{ \ar@<.2ex>^-{\bar{d}^r} "3";"2";};
{ \ar@<.2ex>^{\iota} "4";"3";};
\end{xy}$$
where $\pi$ is the quotient by algebra decomposables and $\iota$ is the
inclusion of the coalgebra primitives. We further observe that
$\smash{ QE_{i,j}^2 }$ is zero, unless $i$ is a power of $p$, and that 
$\smash{ PE_{i,j}^2 }$ is zero, unless $i = 1$. Hence, the shortest
possible non-zero differential of lowest possible total degree is
$$d^{p-1}((d\bar{\tau}_0)^{[p]}) = a_0 \cdot d\bar{\xi}_1$$
with $a_0 \in k$. In particular, we have $E^{p-1} = E^2$. If
$a_0 = 0$, then $d\bar{\xi}_1$ survives the spectral sequence, so 
$a_0 \in k^*$. This proves the claim for $i = s = 0$.

We proceed by nested induction on $i,s \geq 0$ to prove the claim
in general. We first note that if, for a fixed $i \geq 0$, the claim
holds for $s = 0$, then it holds for all $s \geq 0$. For let $s \geq 1$
and assume, inductively, that the claim holds for all smaller values
of $s$. One calculates that the difference
$$d^{p-1}((d\bar{\tau}_i)^{[p+s]}) - a_i \cdot d\bar{\xi}_{i+1} \cdot
(d\bar{\tau}_i)^{[s]}$$
is a coalgebra primitive element, which shows that it is zero, since
all non-zero coalgebra primitives in $E^{p-1} = E^2$ have filtration
$i = 1$. 

It remains to prove that the claim holds for all $i \geq 0$ and
$s = 0$. We have already proved the case $i = 0$, so we let $j \geq 1$
and assume, inductively, that the claim has been proved for all
$i < j$ and all $s \geq 0$. The inductive assumption implies that
$E^p$ is a subquotient of the $C$-subalgebra 
$$D = \Lambda_C\{d\bar{\xi}_i \mid i \geq j+1\} \otimes_C
\Gamma_C\{d\bar{\tau}_i \mid i \geq j\} \otimes_C
C[d\bar{\tau}_i \mid i < j ] / ((d\bar{\tau}_i)^p \mid i < j)$$
of $E^{p-1} = E^2$. Now, since $\pi_*(k \otimes \THH(R))$ is an
augmented $C$-algebra, all elements of filtration $0$ survive the
spectral sequence. Hence, if $x \in E^r$ with $r \geq p$ supports a
non-zero differential, then $x$ has filtration at least $p+1$. But all
algebra generators in $D$ of filtration at least $p+1$ have total
degree at least $2p^{j+2}$, so either
$d^{p-1}((d\bar{\tau}_j)^{[p]})$ is non-zero, or else all elements in 
$D$ of total degree at most $2p^{j+2}-2$ survive the spectral
sequence. Since we know that the element $d\bar{\xi}_{j+1} \in D$ of
total degree $2p^{j+1}-1$ does not survive the spectral sequence, we
conclude that the former is the case. We must show that
$$d^{p-1}((d\bar{\tau}_j)^{[p]}) = a_j \cdot d\bar{\xi}_{j+1}$$
with $a_j \in k^*$, and to this end, we use the fact that $E^{p-1}$ is a
$C$-Hopf algebra in the symmetric monoidal category
of $(A,B)$-modules. We have
$$\begin{aligned}
\epsilon(1 \otimes_{A,d^1}d\bar{\xi}_i)
& = \epsilon((\id \otimes_{A,d^1}\,d)(1 \otimes_{A,d^1} \bar{\xi}_i))
{} = (\id \otimes_{A,d^{\hskip.7pt 0}}\,d)(\epsilon(1 \otimes_{A,d^1}
\bar{\xi}_i)) \cr
{} & = (\id \otimes_{A,d^{\hskip.7pt 0}}\,d)(\textstyle{\sum}\,
\bar{\xi}_s \otimes_{A,d^{\hskip.7pt 0}} \bar{\xi}_t^{\,p^s}) 
{} = 1 \otimes_{A,d^{\hskip.7pt 0}} d\bar{\xi}_i, \cr
\epsilon(1 \otimes_{A,d^1}d\bar{\tau}_i)
& = \epsilon((\id \otimes_{A,d^1}\,d)(1 \otimes_{A,d^1} \bar{\tau}_i))
{} = (\id \otimes_{A,d^{\hskip.7pt 0}}\,d)(\epsilon(1 \otimes_{A,d^1}
\bar{\tau}_i)) \cr
{} & = (\id \otimes_{A,d^{\hskip.7pt 0}}\,d)(1
\otimes_{A,d^{\hskip.7pt 0}}\bar{\tau}_i + \textstyle{\sum}\,
\bar{\tau}_s \otimes_{A,d^{\hskip.7pt 0}} \bar{\xi}_t^{\,p^s}) 
{} = 1 \otimes_{A,d^{\hskip.7 pt 0}} d\bar{\tau}_i, \cr
\end{aligned}$$
where the sums range over $s,t \geq 0$ with $s+t = i$.
Hence, the sub-$k$-vector space of $E^{p-1}$ that consists of
the horizontal elements of bidegree $(1,2p^{j+1}-2)$ is spanned by 
$d\bar{\xi}_{j+1}$. Therefore, it suffices to show that
$(d\bar{\tau}_j)^{[p]}$, and hence, $d^{p-1}((d\bar{\tau}_j)^{[p]})$
is horizontal. We have already proved that $d\bar{\tau}_j$ is
horizontal, and using the fact that $E^{p-1}$ is a $C$-algebra in
the symmetric monoidal category of $(A,B)$-modules, we conclude that
$(d\bar{\tau}_j)^s$, and therefore, $(d\bar{\tau}_j)^{[s]}$ is
horizontal for all $0 \leq s < p$. Finally, we make use of the fact
that $E^{p-1}$ is a $C$-coalgebra in the symmetric monoidal category
of $(A,B)$-modules. Since 
$$\psi((d\bar{\tau}_j)^{[p]})
= \textstyle{\sum}\, (d\bar{\tau}_j)^{[s]} \otimes (d\bar{\tau}_j)^{[t]}$$
with the sum indexed by $s,t \geq 0$ with $s+t=p$, and since we have
already proved that $(d\bar{\tau}_j)^{[s]}$ with $0 \leq s < p$ are
horizontal, we find that $(d\bar{\tau}_j)^{[p]}$ is horizontal. This
completes the proof.
\end{proof}

We finally recall the following analogue of the Segal conjecture. This
is a key result for understanding topological cyclic homology and its
variants.

\begin{addendum}\label{add:bokstedtperiodicity}The Frobenius induces
an equivalence
$$\xymatrix{
{ \THH(\mathbb{F}_p) } \ar[r]^-{\varphi} &
{ \tau_{\geq 0}\THH(\mathbb{F}_p)^{tC_p}. } \cr
}$$
\end{addendum}

\begin{proof}See~\cite[Section~IV.4]{nikolausscholze}.
\end{proof}

\index{B\"{o}kstedt periodicity|)}

\section{Perfectoid rings}\label{sec:perfectoidrings}

Perfectoid rings are to topological Hochschild homology what separably
closed fields are to $K$-theory: they annihilate K\"{a}hler differentials.
In this section, we present the proof by
Bhatt--Morrow--Scholze that
B\"{o}kstedt periodicity holds for every perfectoid ring $R$. As a
consequence, the Tate spectral sequence
$$E_{i,j}^2 = \hat{H}^{-i}(B\hskip.5pt\mathbb{T},\THH_j(R,\mathbb{Z}_p))
\Rightarrow \TP_{i+j}(R,\mathbb{Z}_p)$$
collapses and gives the ring $\TP_0(R,\mathbb{Z}_p)$ a complete and
separated descending filtration, the graded pieces of which are free
$R$-modules of rank $1$. The ring $\TP_0(R,\mathbb{Z}_p)$, however, is
not a power series ring over $R$. Instead, it agrees, up to unique
isomorphism over $R$, with Fontaine's $p$-adic period ring
$A_{\inf}(R)$, the definition of which we recall below. Finally, we
use these results to prove that B\"{o}kstedt periodicity implies Bott
periodicity.

\subsection{Perfectoid rings}
\index{perfectoid ring|(}

A $\mathbb{Z}_p$-algebra $R$ is perfectoid, for example, if there exists a
non-zero-divisor $\pi \in R$ with $p \in \pi^pR$ such that $R$ is
complete and separated with respect to the $\pi$-adic topology and
such that the Frobenius $\varphi \colon R/\pi \to R/\pi^p$ is an
isomorphism. We will give the general definition, which does
not require $\pi \in R$ to be a non-zero-divisor, below.
Typically, perfectoid rings are large and highly non-noetherian.
Moreover, the ring $R/\pi$ is typically not a field,
but is also a large non-noetherian ring with many nilpotent
elements. An example to keep in mind is the valuation ring
$\mathcal{O}_C$ in an algebraically closed field $C/\mathbb{Q}_p$
that is complete with respect to a non-archimedean absolute value
extending the $p$-adic absolute value on $\mathbb{Q}_p$; here we can
take $\pi$ to be a $p$th root of $p$.

We recall some facts from~\cite[Section~3]{bhattmorrowscholze1}. If a
ring $S$ contains an element $\pi \in S$ such that $p \in \pi S$ and
such that $\pi$-adic topology on $S$ is complete and separated, then
the canonical projections
$$\begin{xy}
(0,7)*+{ \lim_{n,F} W(S) }="11";
(33,7)*+{ \lim_{n,F} W(S/p) }="12";
(68,7)*+{ \lim_{n,F} W(S/\pi) }="13";
(0,-7)*+{ \lim_{n,F} W_n(S) }="21";
(33,-7)*+{ \lim_{n,F} W_n(S/p) }="22";
(68,-7)*+{ \lim_{n,F} W_n(S/\pi) }="23";
{ \ar "12";"11";};
{ \ar "13";"12";};
{ \ar "21";"11";};
{ \ar "22";"12";};
{ \ar "23";"13";};
{ \ar "22";"21";};
{ \ar "23";"22";};
\end{xy}$$
all are isomorphisms. Here the limits range over non-negative integers
$n$ with the respective Witt vector Frobenius maps as the structure
maps. Moreover, since the Witt vector Frobenius for $\mathbb{F}_p$-algebras
agrees with the map of rings of Witt vectors induced by the Frobenius,
we have a canonical map
$$\begin{xy}
(0,0)*+{ W(\lim_{n,\varphi} S/p) }="1";
(35,0)*+{ \lim_{n,F} W(S/p), }="2";
{ \ar "2";"1";};
\end{xy}$$
and this map, too, is an isomorphism, since the Witt vector functor
preserves limits. The perfect $\mathbb{F}_p$-algebra
$$S^{\flat} = \lim_{n,\varphi} S/p = \lim(S/p \xleftarrow{\varphi} S/p
\xleftarrow{\varphi} \cdots)$$
is called the tilt\index{tilt} of $S$, and its ring of Witt vectors
$$A_{\inf}(S) = W(S^{\flat})$$
is called Fontaine's ring of $p$-adic
periods.\index{Fontaine's ring of $p$-adic periods} The Frobenius
automorphism $\varphi$ of $S^{\flat}$ induces the automorphism
$W(\varphi)$ of $A_{\inf}(S)$, which, by abuse of notation, we also
write $\varphi$ and call the Frobenius.

We again consider the diagram of isomorphisms at the beginning of the
section. By composing the isomorphisms in the diagram with the projection
onto $W_n(S)$ in the lower left-hand term of the diagram, we obtain a ring
homomorphism $\smash{ \widetilde{\theta}_n \colon A_{\inf}(S) \to
  W_n(S) }$, and we define
$$\begin{xy}
(0,0)*+{ A_{\inf}(S) }="1";
(25,0)*+{ W_n(S) }="2";
{ \ar^-{\theta_n} "2";"1";};
\end{xy}$$
to be $\theta_n = \widetilde{\theta}_n \circ \varphi^n$. It is clear
from the definition that the diagrams
$$\begin{xy}
(0,7)*+{ A_{\inf}(S) }="11";
(27,7)*+{ W_n(S) }="12";
(57,7)*+{ A_{\inf}(S) }="13";
(84,7)*+{ W_n(S) }="14";
(0,-7)*+{ A_{\inf}(S) }="21";
(27,-7)*+{ W_{n-1}(S) }="22";
(57,-7)*+{ A_{\inf}(S) }="23";
(84,-7)*+{ W_{n-1}(S) }="24";
{ \ar^-{\theta_n} "12";"11";};
{ \ar^-{\widetilde{\theta}_n} "14";"13";};
{ \ar@<-.8ex>_-{\id} "21";"11";};
{ \ar@<.8ex>^-{\varphi} "21";"11";};
{ \ar@<-.8ex>_-{R} "22";"12";};
{ \ar@<.8ex>^-{F} "22";"12";};
{ \ar@<-.8ex>_-{\varphi^{-1}} "23";"13";};
{ \ar@<.8ex>^-{\id} "23";"13";};
{ \ar@<-.8ex>_-{R} "24";"14";};
{ \ar@<.8ex>^-{F} "24";"14";};
{ \ar^-{\theta_{n-1}} "22";"21";}; 
{ \ar^-{\widetilde{\theta}_{n-1}} "24";"23";};
\end{xy}$$
commute. The map $\theta = \theta_1 \colon A_{\inf}(S) \to S = W_1(S)$
is Fontaine's map from~\cite{fontaine}, which we now describe more
explicitly. There is a well-defined map
$$\begin{xy}
(0,0)*+{ {}_{\phantom{\flat}}S^{\flat} }="1";
(20,0)*+{ S }="2";
{ \ar^-{(-)^{\#}} "2";"1";};
\end{xy}$$
that to $a = (x_0,x_1, \dots) \in \lim_{n,\varphi} S/p = S^{\flat}$
assigns $a^{\#} = \lim_{n \to \infty} y_n^{p^n}$ for any choice of lifts
$y_n \in S$ of $x_n \in S/p$. It is multiplicative, but it is not additive
unless $S$ is an $\mathbb{F}_p$-algebra. Using this map, we have
$$\textstyle{ \theta( \sum_{i \geq 0}[a_i]\,p^i ) = \sum_{i \geq
    0}a_i^{\#}p^i, }$$
where $[-] \colon S^{\flat} \to W(S^{\flat})$ is the Teichm\"{u}ller
representative. We can now state the general definition of a
perfectoid ring that is used in~\cite{bhattmorrowscholze1},
\cite{bhattmorrowscholze2}, and~\cite{bhattscholze}.
\index{perfectoid ring!definition of}

\begin{definition}\label{def:perfectoid}A $\mathbb{Z}_p$-algebra $R$
is perfectoid if there exists $\pi \in R$ such that $p \in \pi^pR$,
such that the $\pi$-adic topology on $R$ is complete and separated,
such that the Frobenius $\varphi \colon R/p \to R/p$ is
surjective, and such that the kernel of 
$\theta \colon A_{\inf}(R) \to R$ is a principal ideal.
\end{definition}

The ideal $I = \ker(\theta) \subset A = W(R^{\flat})$ is
typically not fixed by the Frobenius on $A$, but it always satisfies
the prism property that\index{prism}\index{untilt}
$$p \in I + \varphi(I)A.$$
If an ideal $J \subset A$ satisfies the prism property, then the
quotient $A/J$ is an untilt of $R^{\flat}$ in the sense that it is
perfectoid and that its tilt is $R^{\flat}$. In fact, every untilt of
$\smash{ R^{\flat} }$ arises as $A/J$ for some ideal $J \subset A$
that satisfies the prism property. The set of such ideals is typically
large, but it has a very interesting $p$-adic geometry. Indeed, for
$R = \mathcal{O}_C$, there is a canonical  one-to-one correspondence
between orbits under the Frobenius of such ideals and closed points of
the Fargues--Fontaine curve\index{Fargues--Fontaine curve}
$\operatorname{FF}_C$~\cite{fargues}. Among all ideals $J \subset A$
that satisfy the prism property, the ideal $J = pA$ is the only one
for which the untilt $A/J = \smash{ R^{\flat} }$ is of characteristic
$p$; all other untilts $A/J$ are of mixed characteristic $(0,p)$. One
can show that every untilt $A/J$ is a reduced ring and that a
generator $\xi$ of the ideal $J \subset A$ necessarily is a
non-zero-divisor. Hence, such a generator is well-defined, up to a
unit in $A$. An untilt $A/J$ may have $p$-torsion, but if an element
is annihilated by some power of $p$, then it is in fact annihilated by
$p$. We refer to~\cite{bhattscholze} for proofs of these statements.
\index{perfectoid ring!properties of}

Bhatt--Morrow--Scholze prove
in~\cite[Theorem~6.1]{bhattmorrowscholze2} that 
B\"{o}kstedt periodicity for $R = \mathbb{F}_p$ implies the analogous
result for $R$ any perfectoid ring.
\index{B\"{o}kstedt periodicity!for perfectoid ring}

\begin{theorem}[Bhatt--Morrow--Scholze]\label{thm:bokstedtperiodicityperfectoid}If
$R$ is a perfectoid ring, then the canonical map is an isomorphism of graded rings
$$\xymatrix{
{ \operatorname{Sym}_R(\THH_2(R,\mathbb{Z}_p)) } \ar[r] &
{ \THH_*(R,\mathbb{Z}_p), } \cr
}$$
and $\THH_2(R,\mathbb{Z}_p)$ is a free $R$-module of rank $1$.
\end{theorem}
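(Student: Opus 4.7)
The plan is to bootstrap the theorem from B\"okstedt's case $R = \mathbb{F}_p$ (Theorem~\ref{thm:bokstedtperiodicity}) by exploiting the fact that perfectoid rings have an extremely rigid cotangent complex over $\mathbb{Z}_p$. As a warmup, I would first observe that for any perfect $\mathbb{F}_p$-algebra $R_0$, flat base change from $\mathbb{F}_p$ to $R_0$ applied to the B\"okstedt spectral sequence of the previous subsection yields $\THH_*(R_0) \cong R_0[x]$ with $|x| = 2$; in particular this handles the tilt $R^{\flat}$ of a perfectoid ring $R$.

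The heart of the argument is the cotangent complex calculation
$$L_{R/\mathbb{Z}_p}^{\wedge} \simeq (I/I^2)[1] \simeq R[1],$$
which follows from the transitivity triangle for $\mathbb{Z}_p \to A_{\inf}(R) \to R$: the middle term satisfies $L_{A_{\inf}(R)/\mathbb{Z}_p}^{\wedge} \simeq 0$, because $A_{\inf}(R) = W(R^{\flat})$ is the unique $p$-complete, $p$-torsion-free lift of the perfect $\mathbb{F}_p$-algebra $R^{\flat}$, while $I/I^2 \cong R$ because $I = (\xi)$ is principal with a non-zero-divisor as generator. Hence $L_{R/\mathbb{Z}_p}^{\wedge}$ is an invertible $R$-module concentrated in a single odd degree. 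Plugging this into the HKR filtration of Section~\ref{sec:connesoperator} applied to $\HH(R/\mathbb{Z}_p)_p^{\wedge}$, the graded pieces become
$$\gr^j \HH(R/\mathbb{Z}_p)_p^{\wedge} \simeq \bigl(\Lambda_R^j\, L_{R/\mathbb{Z}_p}^{\wedge}\bigr)[j] \simeq R[2j],$$
since the non-abelian derived $j$th exterior power of a shifted rank-one free module in odd degree is again free of rank one, shifted into the expected even degree. Because these graded pieces occupy distinct even total degrees and the filtration is complete descending, there are no extension problems and $\HH_n(R/\mathbb{Z}_p;\mathbb{Z}_p)$ is free of rank one for $n$ even and vanishes for $n$ odd.

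To pass from $\HH(R/\mathbb{Z}_p)$ to $\THH(R)$ I would use the spectral sequence
$$E^2_{i,j} = \HH_i(R/\pi_*\mathbb{S})_j \Rightarrow \THH_{i+j}(R; \mathbb{Z}_p)$$
coming from the base change $\mathbb{S} \to \mathbb{Z}_p$, the analogue for $R$ of the spectral sequence mentioned in the introduction for $R = \mathbb{F}_p$. Naturality in $R$ combined with the already known collapse for $R = \mathbb{F}_p$ and for $R = \mathbb{Z}_p$ forces collapse in general, identifying the output with a polynomial algebra $R[x]$ on a generator in degree $2$. The principal obstacle will be the derived calculus underlying the HKR step: one must justify that the non-abelian derived $j$th exterior power of the shifted invertible module $R[1]$ really is $R[j]$ (effectively a divided-power computation, since odd-degree classes square symmetrically in the derived setting), and one must set up the HKR filtration itself via left Kan extension from smooth algebras, since $R$ is typically very far from smooth. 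It is precisely the explicit description of $L_{R/\mathbb{Z}_p}^{\wedge}$ as a shifted invertible module that makes these derived computations tractable and reduces the entire theorem to the classical B\"okstedt calculation over $\mathbb{F}_p$.
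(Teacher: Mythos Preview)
Your computation of $\HH_*(R/\mathbb{Z}_p;\mathbb{Z}_p)$ via the cotangent complex is essentially what the paper does: it shows $\HH(R/\mathbb{Z},\mathbb{Z}_p) \simeq \HH(R/A_{\inf}(R),\mathbb{Z}_p) \simeq \Gamma_R\{x\}$ by an explicit Koszul calculation, which is your HKR step packaged differently. So steps~1--3 are fine and match the published argument.

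The gap is in step~4. Your claim that ``naturality in $R$ combined with the already known collapse for $R = \mathbb{F}_p$ and for $R = \mathbb{Z}_p$ forces collapse in general'' does not work. First, $\mathbb{Z}_p$ is not perfectoid and $\THH_*(\mathbb{Z}_p,\mathbb{Z}_p)$ is not a polynomial ring, so there is no collapse there to invoke. Second, for a mixed-characteristic perfectoid $R$ there is no ring map from $\mathbb{F}_p$ to $R$, so you cannot transport collapse along naturality in the direction you need. The passage from the divided-power algebra $\HH_*(R/\mathbb{Z}_p;\mathbb{Z}_p)$ to the polynomial algebra $\THH_*(R,\mathbb{Z}_p)$ is exactly the nontrivial content, and the paper handles it by a different mechanism: it first deduces from the $\HH$-calculation that $\THH(R,\mathbb{Z}_p)$ is a pseudocoherent $R$-module, then proves a base-change formula $\THH(R,\mathbb{Z}_p)\otimes_R R' \simeq \THH(R',\mathbb{Z}_p)$ for maps of perfectoid rings, and finally uses the surjection $R \to \bar R = R/\sqrt{pR}$ onto a perfect $\mathbb{F}_p$-algebra. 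Nakayama (using pseudocoherence) gives surjectivity of $\operatorname{Sym}_R(\THH_2) \to \THH_*$, and injectivity is checked after localizing at minimal primes, where one lands either in the perfect $\mathbb{F}_p$-algebra case or in $R[1/p]$, where the $\HH$-calculation already suffices. None of this is captured by your spectral-sequence collapse sketch.
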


\begin{proof}We follow the proof in loc.~cit. We first claim that the
canonical map
$$\xymatrix{
{ \Gamma_R(\HH_2(R/\mathbb{Z},\mathbb{Z}_p)) } \ar[r] &
{ \HH_*(R/\mathbb{Z},\mathbb{Z}_p) } \cr
}$$
is an isomorphism and that the $R$-module
$\HH_2(R/\mathbb{Z},\mathbb{Z}_p)$ is free of rank $1$. To prove this,
we first notice that the base-change map
$$\xymatrix{
{ \HH(R/\mathbb{Z}) } \ar[r] &
{ \HH(R/A_{\inf}(R)) } \cr
}$$
is a $p$-adic equivalence. Indeed, we always have
$$\HH(R/A_{\inf}(R)) \simeq \HH(R/\mathbb{Z}_p)
\otimes_{\HH(A_{\inf}(R)/\mathbb{Z}_p)} A_{\inf}(R),$$
and $\HH(A_{\inf}(R)/\mathbb{Z}) \to A_{\inf}(R)$ is a $p$-adic
equivalence, because
$$\HH(A_{\inf}(R)/\mathbb{Z}) \otimes_{\mathbb{Z}} \mathbb{F}_p
\simeq \HH(A_{\inf}(R) \otimes_{\mathbb{Z}} \mathbb{F}_p/\mathbb{F}_p)
\simeq \HH(R^{\flat} / \mathbb{F}_p) \simeq R^{\flat}.$$
The last equivalence holds since $R^{\flat}$ is perfect.
Now, we write $R \simeq \Lambda_{A_{\inf}(R)}\{y\}$ with $dy = \xi$ to
see that $R \otimes_{A_{\inf}(R)} R \simeq \Lambda_R\{y\}$ with $dy = 0$,
and similarly, we write $R \simeq \Lambda_R\{y\} \otimes
\Gamma_R\{x\}$ with $dx^{[i]} = x^{[i-1]}y$ to see that
$$\HH(R/A_{\inf}(R)) \simeq R \otimes_{R \otimes_{A_{\inf}(R)} R} R
\simeq \Gamma_R\{x\},$$
which proves the claim.

It follows, in particular, that the $R$-module
$$\THH(R,\mathbb{Z}_p) \otimes_{\THH(\mathbb{Z})}
\mathbb{Z} \simeq \HH(R/\mathbb{Z},\mathbb{Z}_p)$$
is pseudocoherent\index{pseudocoherent module} in the sense that it
can be represented by a chain complex of finitely generated free
$R$-modules that is bounded below. Since $\THH(\mathbb{Z})$ has
finitely generated homotopy groups, we conclude, inductively, that
$$\THH(R,\mathbb{Z}_p) \otimes_{\THH(\mathbb{Z})}\tau_{\leq
  n}\THH(\mathbb{Z})$$
is a pseudocoherent $R$-module for all $n \geq 0$. Therefore, also
$\THH(R,\mathbb{Z}_p)$ is a pseudocoherent $R$-module.

Next, we claim that any ring homomorphism $R \to R'$ between
perfectoid rings induces an equivalence
$$\xymatrix{
{ \THH(R,\mathbb{Z}_p) \otimes_RR' } \ar[r] &
{ \THH(R',\mathbb{Z}_p). } \cr
}$$
Indeed, it suffices to prove that the claim holds after extension of
scalars along the canonical map $\THH(\mathbb{Z}) \to \mathbb{Z}$.
This reduces us to proving that
$$\xymatrix{
{ \HH(R,\mathbb{Z}_p) \otimes_RR' } \ar[r] &
{ \HH(R',\mathbb{Z}_p) } \cr
}$$
is an equivalence, which follows from the first claim.

We now prove that the map in the statement is an isomorphism. The
case of $R = \mathbb{F}_p$ is Theorem~\ref{thm:bokstedtperiodicity},
and the case of a perfect $\mathbb{F}_p$-algebra follows from the
base-change formula that we just proved. In the general case, we show,
inductively, that the map is an isomorphism in degree $i \geq 0$. So
we assume that the map is an isomorphism in degrees $< i$ and prove
that it is an isomorphism in degree $i$. By induction, the $R$-module
$\tau_{<i}\THH(R,\mathbb{Z}_p)$ is perfect, and hence, the $R$-module
$\tau_{\geq i}\THH(R,\mathbb{Z}_p)$ is pseudocoherent. It follows that
the $R$-module $\THH_i(R,\mathbb{Z}_p)$ is finitely generated. Since
$R$ is perfectoid, the composition 
$$\xymatrix{
{ R } \ar[r] &
{ R/p } \ar[r] &
{ \bar{R} = \colim_{n,\varphi} R/p = R/\sqrt{pR} } \cr
}$$
of the canonical projection and the canonical map from the initial term
in the colimit is surjective. Since $\bar{R}$
is a perfect $\mathbb{F}_p$-algebra, the base-change formula and the
inductive hypothesis show that, in the diagram
$$\xymatrix{
{ \operatorname{Sym}_R(\THH_2(R,\mathbb{Z}_p)) \otimes_R \bar{R} } \ar[r] \ar[d] &
{ \THH_*(R, \mathbb{Z}_p) \otimes_R \bar{R} } \ar[d] \cr
{ \operatorname{Sym}_{\bar{R}}(\THH_2(\bar{R},\mathbb{Z}_p)) } \ar[r] &
{ \THH_*(\bar{R}, \mathbb{Z}_p), } \cr
}$$
the vertical maps are isomorphisms in degrees $\leq i$, and we have
already seen that the lower horizontal map is an isomorphism. Hence,
the upper horizontal map is an isomorphism in degrees $\leq i$. Since
the kernel of the map $R \to \bar{R}$ contains the Jacobson radical,
Nakayama's lemma shows that the map in the statement of the theorem
is surjective in degrees $\leq i$. To prove that it is also injective,
we consider the diagram
$$\xymatrix{
{ \operatorname{Sym}_R(\THH_2(R,\mathbb{Z}_p)) } \ar[r] \ar[d] &
{ \THH_*(R,\mathbb{Z}_p) } \ar[d] \cr
{ \prod_{\mathfrak{p}} \operatorname{Sym}_R(\THH_2(R,\mathbb{Z}_p)) \otimes_R
  R_{\mathfrak{p}} } \ar[r] &
{ \prod_{\mathfrak{p}} \THH_*(R,\mathbb{Z}_p) \otimes_R R_{\mathfrak{p}} } \cr
}$$
where the products range over the minimal primes $\mathfrak{p} \subset
R$. Since $R$ is reduced, the left-hand vertical map is injective and 
the local rings $R_{\mathfrak{p}}$ are fields. Hence, it suffices to
prove that for every minimal prime $\mathfrak{p} \subset R$, the
map
$$\xymatrix{
{ \operatorname{Sym}_R(\THH_2(R,\mathbb{Z}_p)) \otimes_R
  R_{\mathfrak{p}} } \ar[r] &
{ \THH_*(R,\mathbb{Z}_p) \otimes_R R_{\mathfrak{p}} } \cr
}$$
is injective in degrees $\leq i$. To this end, we write
$\operatorname{Spec}(R)$ as the union of the closed subscheme
$\operatorname{Spec}(\bar{R})$ and its open complement
$\operatorname{Spec}(R[\nicefrac{1}{p}])$. If $\mathfrak{p}$ belongs to
$\operatorname{Spec}(\bar{R})$, then the map in question is an
isomorphism in degrees $\leq i$ by what was proved above. Similarly,
if $\mathfrak{p}$ belongs to $\operatorname{Spec}(R[\nicefrac{1}{p}])$,
then the map in a question is an isomorphism in all degrees, since the
map
$$\xymatrix{
{ \operatorname{Sym}_R(\THH_2(R,\mathbb{Z}_p)) \otimes_R R[\nicefrac{1}{p}] } \ar[r] &
{ \THH_*(R,\mathbb{Z}_p) \otimes_R R[\nicefrac{1}{p}] } \cr
}$$
is so, by the claim at the beginning of the proof. This completes the
proof.
\end{proof}

\begin{addendum}\label{add::bokstedtperiodicityperfectoid}If $R$ is a
perfectoid ring, then
$$\xymatrix{
{ \THH(R,\mathbb{Z}_p) } \ar[r]^-{\varphi} &
{ \tau_{\geq 0}\THH(R,\mathbb{Z}_p)^{tC_p} } \cr
}$$
is an equivalence.
\end{addendum}

\begin{proof}See~\cite[Proposition~6.2]{bhattmorrowscholze2}.
\end{proof}

We show that Fontaine's map $\theta \colon A_{\inf}(R) \to R$ is
the universal $p$-complete pro-infinitesimal thickening
following~\cite[Th\'{e}or\`{e}me~1.2.1]{fontaine}. We remark that, in
loc.~cit., Fontaine defines $A_{\inf}(R)$ to be the
$\ker(\theta)$-adic completion of $W(\smash{ R^{\flat} })$. We include
a proof here that this is not necessary in that the
$\ker(\theta)$-adic topology on $W(\smash{ R^{\flat} })$ is already
complete and separated.
\index{$p$-complete pro-infinitesimal thickening}
\index{Fontaine's ring of $p$-adic periods}

\begin{proposition}[Fontaine]\label{prop:fontaine}If $R$ is a perfectoid
ring, then the map
$$\xymatrix{
{ A_{\inf}(R) } \ar[r]^-{\theta} &
{ R } \cr
}$$
is initial among ring
homomorphisms $\theta_D \colon D \to R$ such that $D$ is complete and
separated in both the $p$-adic topology and the $\ker(\theta_D)$-adic
topology.
\end{proposition}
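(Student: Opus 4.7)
I would prove the proposition in two steps: first verify that $\theta \colon \Ainf(R) \to R$ is itself an object of the category in question, and then construct the required unique factorization through any other object.

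Step one amounts to showing that $\Ainf(R) = W(R^{\flat})$ is $\ker(\theta)$-adically complete and separated, the $p$-adic completeness being standard. Writing $I = \ker(\theta) = (\xi)$ with $\xi$ a non-zero-divisor, the key input is the prism property $p \in I + \varphi(I)\Ainf(R)$ together with the fact that $\varphi$ is an automorphism of $\Ainf(R)$. These two facts imply that the $p$-adic topology agrees with the $(\xi, \varphi(\xi))$-adic topology, and then applying $\varphi^{-n}$ and using that $\varphi$ is a homeomorphism for the $p$-adic topology identifies this with the $\xi$-adic topology.

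For step two, fix $\theta_D \colon D \to R$ with $D$ complete in both the $p$-adic and the $J$-adic topology, where $J = \ker(\theta_D)$; note $\theta_D$ is surjective because $\theta$ is. Given $a = (x_0, x_1, \ldots) \in R^{\flat} = \lim_{\varphi} R/p$, I would choose lifts $y_n \in D$ of $x_n \in R/p$ through $D \to R \to R/p$. The difference $y_{n+1}^{p} - y_n$ lies in $pD + J$, and a binomial expansion using $v_p(\binom{p^n}{k}) = n - v_p(k)$ shows that, for each $N \geq 0$ and all $n$ sufficiently large, $y_{n+1}^{p^{n+1}} - y_n^{p^n}$ lies in $p^ND + J^N$. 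Hence the sequence $(y_n^{p^n})_{n \geq 0}$ is Cauchy in the topology generated by $\{p^ND + J^N\}$ and converges to an element $a^{\#,D} \in D$ independent of choices. The assignment $a \mapsto a^{\#,D}$ is multiplicative, and by the universal property of Witt vectors of a perfect $\mathbb{F}_p$-algebra applied to the $p$-adically complete ring $D$, it extends uniquely to a ring homomorphism $f \colon \Ainf(R) \to D$. Compatibility $\theta_D \circ f = \theta$ is verified on Teichm\"uller lifts $[a]$, where both sides equal the sharp $a^{\#}$, and uniqueness follows since any such factorization must send $[a]$ to this same limit of $p^n$-th powers.

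The main obstacle is extracting from the separate $p$-adic and $J$-adic completeness of $D$ completeness in the mixed topology generated by $\{p^ND + J^N\}$, which is what makes the Cauchy sequence above actually converge; this is purely topological but requires care, since a priori $\bigcap_N (p^ND + J^N)$ need not vanish from the hypotheses. The other subtle point, required to place $\Ainf(R)$ in the category at all, is the $\xi$-adic completeness of $\Ainf(R)$ itself; this is precisely the improvement over Fontaine's original formulation, and it depends essentially on the prism property of $I$ together with the automorphism property of the Frobenius.
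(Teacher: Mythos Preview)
Your two-step outline matches the paper's, but both steps contain genuine gaps.

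\textbf{Step one.} The claim that the $p$-adic topology on $A=\Ainf(R)$ agrees with the $(\xi,\varphi(\xi))$-adic topology is false. Since $\varphi$ reduces modulo $p$ to the $p$-th power map on $R^{\flat}$, one has $\varphi(\xi)\equiv\xi^{p}\pmod{p}$, so together with the prism relation $p\in(\xi,\varphi(\xi))$ one gets $(\xi,\varphi(\xi))=(p,\xi)$, not $(p)$. The $(p,\xi)$-adic topology is strictly finer than the $p$-adic one whenever $\bar\xi\in R^{\flat}$ is not nilpotent, e.g.\ for $R=\mathcal{O}_C$. Consequently the subsequent sentence about applying $\varphi^{-n}$ does not repair the argument. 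The paper proceeds quite differently: it reduces (using that $p$ and $\xi$ are non-zero-divisors) to showing that $A/p$ is derived $\xi$-complete, and then uses the Frobenius automorphism of $A/p=R^{\flat}$ to identify the tower $\{A/(p,\xi^{n})\}_n$ with an essentially constant tower with value $R/p$, whose limit is $R^{\flat}=A/p$.

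\textbf{Step two.} The obstacle you flag is real and you do not overcome it: separate completeness of $D$ in the $p$-adic and in the $J$-adic topologies does not give completeness (or even separatedness) for the filtration $\{p^{N}D+J^{N}\}$, so your Cauchy sequence $(y_n^{p^n})$ need not converge in $D$. The paper bypasses this entirely by passing to $D/p$ first. There the difference $\tilde{x}_{n+1}^{p}-\tilde{x}_n$ lies in $\ker(\bar\theta_D)$, and in characteristic $p$ the Freshman's dream gives $\tilde{x}_{n+1}^{p^{n+1}}-\tilde{x}_n^{p^n}\in\ker(\bar\theta_D)^{p^{n}}$, so only the $\ker(\bar\theta_D)$-adic topology on $D/p$ is needed for convergence. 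The resulting ring map $\bar f\colon R^{\flat}\to D/p$ is then lifted uniquely to $f\colon A\to D$ using that $L_{(A/p)/\mathbb{F}_p}\simeq 0$ (perfection of $R^{\flat}$) together with derived $p$-completeness of $A$ and $D$; this deformation-theoretic lift replaces your direct appeal to the universal property of Witt vectors and is what makes the mixed-topology issue disappear.
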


\begin{proof}We first show that $A = A_{\inf}(R)$ is complete and
separated in both the $p$-adic topology and the $\ker(\theta)$-adic
topology. Since $R^{\flat}$ is a perfect $\mathbb{F}_p$-algebra, we
have $p^nA = V^n(A) \subset A$, so the $p$-adic topology on
$A = W(R^{\flat})$ is complete and separated. Moreover, since 
$p \in A$ is a non-zero-divisor, this is equivalent to $A$ being
derived $p$-complete. As we recalled above, a generator
$\xi \in \ker(\theta)$ is necessariy a non-zero-divisor. Therefore,
the $\ker(\theta)$-adic topology on $A$ is complete and separated if
and only if $A$ is derived $\xi$-adically complete, and since $A$ is
derived $p$-adically complete, this, in turn, is equivalent to $A/p$
being derived $\xi$-adically complete. Now, we have
$$\begin{xy}
(-25,7)*+{ A/p }="10";
(0,7)*+{ A/(p,\xi^n) }="11";
(29,7)*+{ A/(p,\xi) }="12";
(50,7)*+{ R/p }="13";
(-25,-7)*+{ A/p }="20";
(0,-7)*+{ A/(p,\xi^{n-1}) }="21";
(29,-7)*+{ A/(p,\xi) }="22";
(50,-7)*+{ R/p }="23";
{ \ar "11";"10";};
{ \ar^-{\varphi^{-n}} "12";"11";};
{ \ar@{=} "20";"10";};
{ \ar^-{\theta} "13";"12";};
{ \ar^-{\varphi} "21";"11";};
{ \ar^-{\varphi} "22";"12";};
{ \ar^-{\varphi} "23";"13";};
{ \ar "21";"20";};
{ \ar^-{\varphi^{-(n-1)}} "22";"21";};
{ \ar^-{\theta} "23";"22";};
\end{xy}$$
with the middle and right-hand maps bijective and with the
remaining maps surjective. Taking derived limits, we obtain
$$\xymatrix{
{ A/p } \ar[r] &
{ (A/p)_{\xi}^{\wedge} } \ar[r] &
{ (A/\xi)^{\flat} } \ar[r] &
{ R^{\flat} } \cr
}$$
with the middle and right-hand maps equivalences. The composite
map takes the class of $\sum [a_i]p^i$ to $a_0$, and therefore, it,
too, is an equivalence. This proves that the left-hand map is an
equivalence, as desired. 

Let $\theta_D \colon D \to R$ be as in the statement. We wish to prove
that there is a unique ring homomorphism $f \colon A \to D$ such that
$\theta = \theta_D \circ f$. Since $A$ and $D$ are derived
$p$-complete and $L_{(A/p)/\mathbb{F}_p} \simeq 0$, this is equivalent
to showing that there is a unique ring homomorphism
$\smash{ \bar{f} \colon A/p \to D/p }$ with the property that
$\bar{\theta} = \bar{\theta}_D \circ \bar{f}$, where $\bar{\theta}
\colon A/p \to R/p$ and $\bar{\theta}_D \colon D/p \to R/p$ are
induced by $\theta \colon A \to R$ and $\theta_D \colon D \to R$,
respectively. Identifying $A/p$ with $\smash{ R^{\flat} }$, we wish to
show that there is a unique ring homomorphism
$\smash{ \bar{f} \colon R^{\flat} \to D/p }$ such that
$\smash{ a^{\#} +pR = \bar{\theta}_D(\bar{f}(a)) }$ for all
$\smash{ a \in R^{\flat} }$. Since the $\ker(\theta_D)$-adic
topology on $D$ is complete and separated, so is the
$\ker(\bar{\theta}_D)$-adic topology on $D/p$. It follows that for
$\smash{ a = (x_0,x_1, \dots ) \in R^{\flat} }$, the limit
$$\bar{f}(a) = \lim_{n \to \infty} \widetilde{x}_n^{\,p^n} \in D/p,$$ 
where we choose $\widetilde{x}_n \in D/p$ with
$\bar{\theta}_D(\widetilde{x}_n) = x_n \in R/p$, exists and is
independent of the choices made. This defines a map $\smash{ \bar{f}
  \colon R^{\flat} \to D/p }$, and the uniqueness of the limit implies
that it is a ring homomorphism. It satisfies $\bar{\theta} =
\bar{\theta}_D \circ \bar{f}$ by construction, and it is unique with
this property, since the $\ker(\bar{\theta}_D)$-adic topology on
$D/p$ is separated.
\end{proof}

We identify the diagram of $p$-adic homotopy groups
$$\begin{xy}
(0,0)*+{ \TC_*^{-}(R,\mathbb{Z}_p) }="1";
(30,0)*+{ \TP_*(R,\mathbb{Z}_p) }="2";
{ \ar@<.7ex>^-{\varphi} "2";"1";};
{ \ar@<-.7ex>_-{\operatorname{can}} "2";"1";};
\end{xy}$$
for $R$ perfectoid. By B\"{o}kstedt periodicity, the spectral sequences
$$\begin{aligned}
E_{i,j}^2 & = H^{-i}(B\mathbb{T},\THH_j(R,\mathbb{Z}_p)) \Rightarrow
\TC_{i+j}^{-}(R,\mathbb{Z}_p) \cr
E_{i,j}^2 & = \hat{H}^{-i}(B\mathbb{T},\THH_j(R,\mathbb{Z}_p))
\Rightarrow \TP_{i+j}(R,\mathbb{Z}_p) \cr
\end{aligned}$$
collapse. It follows that the respective edge homomorphisms in total
degree $0$ satisfy the hypotheses of Proposition~\ref{prop:fontaine},
and therefore, there exists a unique ring homomorphism making the
diagram
$$\xymatrix{
{ A_{\inf}(R) } \ar[r] \ar[d]^{\theta} &
{ \TC_0^{-}(R,\mathbb{Z}_p) } \ar[r]^-{\operatorname{can}}
\ar[d]^-{\operatorname{edge}} &
{ \TP_0(R,\mathbb{Z}_p) } \ar[d]^-{\operatorname{edge}} \cr
{ R } \ar@{=}[r] &
{ \THH_0(R,\mathbb{Z}_p) } \ar@{=}[r] &
{ \THH_0(R,\mathbb{Z}_p) } \cr
}$$
commute. We view $\TC_*^{-}(R,\mathbb{Z}_p)$ and
$\TP_*(R,\mathbb{Z}_p)$ as graded $A_{\inf}(R)$-algebras via the top
horizontal maps in the diagram. Bhatt--Morrow--Scholze make the
following calculation
in~\cite[Propositions~6.2 and 6.3]{bhattmorrowscholze2}.
\index{topological cyclic homology!negative}
\index{topological cyclic homology!periodic}
\index{negative topological cyclic homology!of perfectoid ring}
\index{periodic topological cyclic homology!of perfectoid ring}

\begin{theorem}[Bhatt--Morrow--Scholze]\label{thm:tpperfectoid}Let $R$
be a perfectoid ring, and let $\xi$ be a generator of the kernel of
Fontaine's map $\theta \colon A_{\inf}(R) \to R$.
\begin{enumerate}
\item[{\rm (1)}]There exists $\sigma \in \TP_2(R,\mathbb{Z}_p)$ such
  that
$$\TP_*(R,\mathbb{Z}_p) = A_{\inf}(R)[\sigma^{\pm1}].$$
\item[{\rm (2)}]There exists
$u \in \TC_2^{-}(R,\mathbb{Z}_p)$ and $v \in \TC_{-2}^{-}(R,\mathbb{Z}_p)$
such that
$$\TC_*^{-}(R,\mathbb{Z}_p) = A_{\inf}(R)[u,v]/(uv-\xi).$$
\item[{\rm (3)}]The graded ring homomorphisms
$$\begin{xy}
(0,0)*+{ \TC_*^{-}(R,\mathbb{Z}_p) }="1";
(30,0)*+{ \TP_*(R,\mathbb{Z}_p) }="2";
{ \ar@<.7ex>^-{\varphi} "2";"1";};
{ \ar@<-.7ex>_-{\operatorname{can}} "2";"1";};
\end{xy}$$
are $\varphi$-linear and $A_{\inf}(R)$-linear, respectively, and  $u$,
$v$, and $\sigma$ can be chosen in such a way that 
$\varphi(u) = \alpha \cdot \sigma$, $\varphi(v) =
\alpha^{-1}\varphi(\xi) \cdot \sigma^{-1}$,
$\operatorname{can}(u) = \xi \cdot\sigma$, and
$\operatorname{can}(v) = \sigma^{-1}$, where $\alpha \in A_{\inf}(R)$
is a unit.\footnote{\,If one is willing to replace $\xi$ by
$\varphi^{-1}(\alpha) \xi$, then the unit $\alpha$ can be
eliminated.} 
\end{enumerate}
\end{theorem}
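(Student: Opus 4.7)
The starting point is Theorem~\ref{thm:bokstedtperiodicityperfectoid}: one has $\THH_*(R,\mathbb{Z}_p) = R[x]$ with $x$ in even degree. Since $H^*(B\mathbb{T})$ and $\hat H^*(B\mathbb{T})$ are also concentrated in even cohomological degrees, the $E^2$-pages of both the homotopy fixed point spectral sequence for $\TC^{-}(R,\mathbb{Z}_p)$ and the Tate spectral sequence for $\TP(R,\mathbb{Z}_p)$ live entirely in even total degree. Hence both spectral sequences collapse at $E^2$, and the resulting multiplicative filtrations on $\TC^{-}_*(R,\mathbb{Z}_p)$ and $\TP_*(R,\mathbb{Z}_p)$ are complete and separated, with associated gradeds the polynomial rings $R[t,x]$ and $R[t^{\pm 1},x]$, respectively (here $t$ has bidegree $(-2,0)$ and $x$ has bidegree $(0,2)$).

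Next I would identify the degree-zero pieces with $A_{\inf}(R)$. The edge homomorphisms $\TC^{-}_0(R,\mathbb{Z}_p)\to R$ and $\TP_0(R,\mathbb{Z}_p)\to R$ are surjections whose kernels are filtration-one, and the filtrations in question are complete. Both rings are also $p$-adically complete because $\THH(R,\mathbb{Z}_p)$ is. Proposition~\ref{prop:fontaine} therefore gives unique lifts
$$A_{\inf}(R)\longrightarrow \TC^{-}_0(R,\mathbb{Z}_p)\qquad\text{and}\qquad A_{\inf}(R)\longrightarrow \TP_0(R,\mathbb{Z}_p)$$
of $\theta$, and, since $\xi\in A_{\inf}(R)$ generates $\ker(\theta)$ and the associated gradeds are $\bigoplus_{n\ge 0} R\cdot (tx)^n$ (resp.\ $\bigoplus_{n\in\mathbb{Z}}R\cdot (tx)^n$), a comparison on gradeds shows both maps are isomorphisms.

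Now I would pick generators. Choose $\sigma\in\TP_2(R,\mathbb{Z}_p)$ lifting the $E^\infty$-class $t^{-1}$; multiplication by $\sigma$ is an $A_{\inf}(R)$-module map which on associated gradeds is multiplication by the unit $t^{-1}\in R[t^{\pm 1},x]$, hence an isomorphism on every filtration piece, so $\sigma$ is a unit in $\TP_*(R,\mathbb{Z}_p)$. Filtered completeness then yields $\TP_*(R,\mathbb{Z}_p)=A_{\inf}(R)[\sigma^{\pm 1}]$, proving~(1). For~(2), the map $\operatorname{can}$ on associated gradeds is the inclusion $R[t,x]\hookrightarrow R[t^{\pm 1},x]$; in total degree $-2$ this is an isomorphism, so there is a unique $v\in\TC^{-}_{-2}(R,\mathbb{Z}_p)$ with $\operatorname{can}(v)=\sigma^{-1}$. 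The element $\xi\sigma\in\TP_2(R,\mathbb{Z}_p)$ lies in the image of $\operatorname{can}$ (its associated-graded representative $tx^2$ has non-negative $t$-degree), so there is a unique $u\in\TC^{-}_2(R,\mathbb{Z}_p)$ with $\operatorname{can}(u)=\xi\sigma$. Multiplicativity and the fact that $\operatorname{can}$ is an isomorphism in degree zero give $uv=\xi$. A final filtered comparison shows that the evident map $A_{\inf}(R)[u,v]/(uv-\xi)\to \TC^{-}_*(R,\mathbb{Z}_p)$ is an isomorphism, since its source has free rank-one associated graded pieces in each even degree, matching those of the target.

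It remains to verify the Frobenius formulas in~(3). The Frobenius $\varphi\colon \TC^{-}(R,\mathbb{Z}_p)\to\TP(R,\mathbb{Z}_p)$ is $\mathbb{T}$-equivariant and makes the diagram of edge homomorphisms commute, so, together with Proposition~\ref{prop:fontaine} applied to the two distinct ring structures $\id$ and $\varphi$ on the source, it is $\varphi$-semilinear over $A_{\inf}(R)$. Thus $\varphi(u)=\alpha\sigma$ and $\varphi(v)=\beta\sigma^{-1}$ for unique $\alpha,\beta\in A_{\inf}(R)$, and multiplicativity gives $\alpha\beta=\varphi(\xi)$. The main obstacle is showing that $\alpha$ is a unit; this is where I would use Addendum~\ref{add::bokstedtperiodicityperfectoid}, which asserts that $\varphi\colon\THH(R,\mathbb{Z}_p)\to\tau_{\geq 0}\THH(R,\mathbb{Z}_p)^{tC_p}$ is an equivalence. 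Passing to $\mathbb{T}/C_p$-homotopy fixed points and using the Tate-orbit lemma identifies the map $\varphi$ after inverting $v$ (equivalently, modulo a suitable Nygaard-type filtration) with an equivalence, and this forces the leading coefficient $\alpha$ to be a unit in $A_{\inf}(R)$. Once $\alpha$ is a unit, $\beta=\alpha^{-1}\varphi(\xi)$ automatically, and rescaling $\xi$ to $\varphi^{-1}(\alpha)\xi$ absorbs $\alpha$ into the choice of generator of $\ker(\theta)$.
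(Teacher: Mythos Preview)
Your overall strategy matches the paper's proof: collapse of the spectral sequences via B\"okstedt periodicity, identification of the degree-zero piece with $A_{\inf}(R)$ via Proposition~\ref{prop:fontaine}, choice of $\sigma$, $u$, $v$ compatibly with $\operatorname{can}$, and then analysis of $\varphi$. Two points deserve attention.

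First, a small slip: the associated-graded representative of $\xi\sigma$ in $E^2$ is $x$, not $tx^2$. Indeed $\xi$ is represented by $tx$ (filtration one) and $\sigma$ by $t^{-1}$ (filtration minus one), so their product is $x$ in filtration zero. This does not affect the argument, but it is what makes the lift $u$ land in $\TC_2^{-}$.

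Second, and more substantively, your justification that $\varphi$ is $\varphi$-semilinear over $A_{\inf}(R)$ is incomplete. Invoking Proposition~\ref{prop:fontaine} alone does not suffice: you need to know what the cyclotomic Frobenius does on the edge before the universal property can pin down the map on $A_{\inf}(R)$. The paper handles this by chasing the commutative square
\[
\xymatrix{
\THH(R)^{h\mathbb{T}} \ar[r]^-{\varphi^{h\mathbb{T}}} \ar[d] & (\THH(R)^{tC_p})^{h\mathbb{T}} \ar[r] & (R^{tC_p})^{h\mathbb{T}} \ar[d] \\
\THH(R) \ar[rr]^-{\varphi} && R^{tC_p}
}
\]
and invoking \cite[Corollary~IV.2.4]{nikolausscholze}, which identifies $\pi_0(\varphi)\colon R \to R/p$ with $x \mapsto x^p$. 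Only then does the uniqueness in Proposition~\ref{prop:fontaine} force $\pi_0(\varphi^{h\mathbb{T}})$ to be the Witt vector Frobenius on $A_{\inf}(R)$. Your phrase ``makes the diagram of edge homomorphisms commute'' needs exactly this input to have content. Once this is in place, the paper concludes that $\varphi\colon \TC_2^{-}(R,\mathbb{Z}_p)\to \TP_2(R,\mathbb{Z}_p)$ is an isomorphism directly (this is the content of Addendum~\ref{add::bokstedtperiodicityperfectoid} upon taking $\mathbb{T}$-fixed points), whence $\alpha$ is a unit; your route through ``inverting $v$'' is more roundabout than necessary.
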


\begin{proof}The canonical map $A_{\inf}(R) \to \TP_0(R,\mathbb{Z}_p)$
is a map of filtered rings, where the domain and target are given the
$\xi$-adic filtration and the filtration induced by the Tate spectral
sequence, respectively. Since both filtrations are complete and
separated, the map is an isomorphism if and only if the induced maps
of filtration quotients are isomorphisms. These, in turn, are
$R$-linear maps between free $R$-modules of rank $1$, and to prove
that they are isomorphisms, it suffices to consider the case
$R = \mathbb{F}_p$.

The canonical map
$\operatorname{can} \colon \TC_*^{-}(R,\mathbb{Z}_p)
\to \TP_*(R,\mathbb{Z}_p)$ induces the map of spectral sequences that, on
$E^2$-terms, is given by the localization map
$$\xymatrix{
{ R[t,x] } \ar[r] &
{ R[t^{\pm1},x], } \cr
}$$
where $t \in E_{-2,0}^2$ and $x \in E_{0,2}^2$ are any $R$-module
generators. It follows that $\TP_*(R,\mathbb{Z}_p)$ is 2-periodic and
concentrated in even degrees, so~(1) holds for any
$\sigma \in \TP_2(R,\mathbb{Z}_p)$ that is an $A_{\inf}(R)$-module
generator, or equivalently, is represented in the spectral sequence by
an $R$-module generator of $E_{2,0}^2$. We now fix a choice of
$\sigma \in \TP_2(R,\mathbb{Z}_p)$, and let $t \in E_{-2,0}^2$ and
$x \in E_{0,2}^2$ be the unique elements that 
represent $\sigma^{-1} \in \TP_{-2}(R,\mathbb{Z}_p)$ and
$\xi\sigma \in \TP_2(R,\mathbb{Z}_p)$, respectively. The latter
classes are the images by the canonical map of unique classes $v \in
\TC_{-2}^{-}(R,\mathbb{Z}_p)$ and $u \in \TC_2^{-}(R,\mathbb{Z}_p)$,
and $uv = \xi$. This proves~(2) and the part of~(3) that concerns the
canonical map.

It remains to identify $\varphi \colon \TC_*^{-}(R,\mathbb{Z}_p) \to
\TP_*(R,\mathbb{Z}_p)$. In degree zero, we have fixed identifications
of domain and target with $A_{\inf}(R)$, and we first prove that, with
respect to these identifications, the map in question is given by the
Frobenius $\varphi \colon A_{\inf}(R) \to A_{\inf}(R)$. To this end,
we consider the diagram
$$\begin{xy}
(0,7)*+{ \THH(R)^{h\mathbb{T}} }="11";
(37,7)*+{ (\THH(R)^{tC_p})^{h\mathbb{T}} }="12";
(73,7)*+{ (R^{tC_p})^{h\mathbb{T}} \phantom{,()}}="13";
(0,-7)*+{ \THH(R) }="21";
(37,-7)*+{ \THH(R)^{tC_p} }="22";
(73,-7)*+{ R^{tC_p}, \phantom{()} }="23";
{ \ar^-{\varphi^{h\mathbb{T}}} "12";"11";};
{ \ar "13";"12";};
{ \ar "21";"11";};
{ \ar "22";"12";};
{ \ar@<-1ex> "23";"13";};
{ \ar^-{\varphi} "22";"21";};
{ \ar "23";"22";};
\end{xy}$$
where, on the right, we view $R$ as an $\mathbb{E}_{\infty}$-ring with
trivial $\mathbb{T}$-action, and where the right-hand horizontal maps both
are induced by the unique extension $\THH(R) \to R$ of the identity
map of $R$ to a map of $\mathbb{E}_{\infty}$-rings with
$\mathbb{T}$-action. Applying $\pi_0(-,\mathbb{Z}_p)$, we obtain the
diagram of rings
$$\begin{xy}
(0,7)*+{ A_{\inf}(R) }="11";
(28,7)*+{ A_{\inf}(R) }="12";
(55,7)*+{ R\phantom{,} }="13";
(0,-7)*+{ R }="21";
(28,-7)*+{ R/p }="22";
(55,-7)*+{ R/p, }="23";
{ \ar^-{\pi_0(\varphi^{h\mathbb{T}})} "12";"11";};
{ \ar^-{\theta} "13";"12";};
{ \ar^-{\theta} "21";"11";};
{ \ar "22";"12";};
{ \ar^-{\pr} "23";"13";};
{ \ar^-{\pi_0(\varphi)} "22";"21";};
{ \ar@{=} "23";"22";};
\end{xy}$$
where we identify the upper right-hand horizontal map by applying
naturality of the edge homomorphism of the Tate spectral sequence 
to $\THH(R) \to R$. Now, it follows from the proof of
Proposition~\ref{prop:fontaine} that the map
$\pi_0(\varphi^{h\mathbb{T}})$ is uniquely determined by the map
$\pi_0(\varphi)$. Moreover, the latter map is identified
in~\cite[Corollary~IV.2.4]{nikolausscholze} to be the map 
that takes $x$ to the class of $x^p$. We conclude that
$\pi_0(\varphi^{h\mathbb{T}})$ is equal to the Frobenius
$\varphi \colon A_{\inf}(R) \to A_{\inf}(R)$, since the latter makes
the left-hand square commute. Finally, since
$$\xymatrix{
{ \TC_2^{-}(R,\mathbb{Z}_p) } \ar[r]^-{\varphi} &
{ \TP_2(R,\mathbb{Z}_p) } \cr
}$$
is an isomorphism, we have $\varphi(u) = \alpha \cdot \sigma$ with
$\alpha \in A_{\inf}(R)$ a unit, and the relation $uv = \xi$ implies
that $\varphi(v) = \alpha^{-1}\varphi(\xi) \cdot \sigma^{-1}$.
\end{proof}

\index{perfectoid ring|)}

\subsection{Bott periodicity}\label{section:bottperiodicity}
\index{Bott periodicity|(}

We fix a field $C$ that contains $\mathbb{Q}_p$ and that is both
algebraically closed and complete with respect to a non-archimedean
absolute value that extends the $p$-adic absolute value on
$\mathbb{Q}_p$. The valuation ring\index{$\mathcal{O}_C$}
$$\mathcal{O}_C = \{ x \in C \mid \lvert x \rvert
\leq 1\} \subset C$$
is a perfectoid ring, and we proceed to evaluate its topological
cyclic homology. We explain that this calculation, which uses
B\"{o}kstedt periodicity, gives a purely $p$-adic proof of Bott
periodicity.

The calculation uses a particular choice of generator $\xi$ of the
kernel of the map
$\theta \colon A_{\inf}(\mathcal{O}_C) \to \mathcal{O}_C$, which we
define first. We fix a generator
$$\epsilon \in \mathbb{Z}_p(1) = T_p(C^*) =
\Hom(\mathbb{Q}_p/\mathbb{Z}_p,C^*)$$
of the $p$-primary Tate module of $C^*$. It determines and is
determined by the sequence $(1,\zeta_p,\zeta_{p^2}, \dots)$ of
compatible primitive $p$-power roots of unity in $C$, where
$\zeta_{p^n} = \epsilon(\nicefrac{1}{p^n} + \mathbb{Z}_p) \in C$. By
abuse of notation, we also write
$$\epsilon = (1,\zeta_p,\zeta_{p^2}, \dots ) \in \mathcal{O}_{C^{\flat}}.$$
We now define the elements $\mu,\xi \in A_{\inf}(\mathcal{O}_C)$
by\index{$\mu$}
$$\begin{aligned}
\mu & = [\epsilon] - 1 \cr
\xi & = \mu/\varphi^{-1}(\mu)
= ([\epsilon]-1)/([\epsilon^{1/p}]-1). \cr
\end{aligned}$$
The element $\xi$ lies in the kernel of $\theta \colon
A_{\inf}(\mathcal{O}_C) \to \mathcal{O}_C$, since
$$\textstyle{ \theta(\xi) = \theta(\sum_{0 \leq k < p}
  [\epsilon^{1/p}]^k) = \sum_{0 \leq k < p} \zeta_p^k = 0, }$$
and it is a generator. More generally, the element
$$\xi_r = \mu/\varphi^{-r}(\mu) =
([\epsilon]-1)/([\epsilon^{1/p^r}]-1)$$
generates the kernel of $\theta_r \colon A_{\inf}(\mathcal{O}_C) \to
W_r(\mathcal{O}_C)$, and the element $\mu$ generates the
kernel of the induced map\footnote{\,Its cokernel
$R^1\lim_r\ker(\theta_r)$ is a huge $A_{\inf}(\mathcal{O}_C)$-module
that is almost zero.}
$$\begin{xy}
(0,0)*+{ A_{\inf}(\mathcal{O}_C) }="1";
(40,0)*+{ \lim_{r,R} W_r(\mathcal{O}_C) = W(\mathcal{O}_C). }="2";
{ \ar^-{(\theta_r)} "2";"1";};
\end{xy}$$

\begin{theorem}\label{thm:tcoc}
\index{topological cyclic homology!of $\mathcal{O}_C$}
\index{$\mathcal{O}_C$!topological cyclic homology of}
Let $C$ be a field extension of
$\mathbb{Q}_p$ that is both algebraically closed and complete with
respect to a non-archimedean absolute value that extends the
$p$-adic absolute value on $\mathbb{Q}_p$, and let
$\mathcal{O}_C \subset C$ be the valuation ring. The canonical map of
graded rings
$$\xymatrix{
{ \operatorname{Sym}_{\hskip.5pt\mathbb{Z}_p}(\TC_2(\mathcal{O}_C,\mathbb{Z}_p)) }
\ar[r] &
{ \TC_*(\mathcal{O}_C,\mathbb{Z}_p) } \cr
}$$
is an isomorphism, and $\TC_2(\mathcal{O}_C,\mathbb{Z}_p)$ is a free
$\mathbb{Z}_p$-module of rank $1$. Moreover, the map
$\TC_{2m}(\mathcal{O}_C,\mathbb{Z}_p) \to
\TC_{2m}^{-}(\mathcal{O}_C,\mathbb{Z}_p)$ takes a
$\mathbb{Z}_p$-module generator of the domain to $\varphi^{-1}(\mu)^m$
times an $A_{\inf}(\mathcal{O}_C)$-module generator of the target.
\end{theorem}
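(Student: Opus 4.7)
The plan is to compute $\TC_*(\mathcal{O}_C,\mathbb{Z}_p)$ from the equalizer of Proposition~\ref{prop:tcformula}, using Theorem~\ref{thm:tpperfectoid} applied to the perfectoid ring $\mathcal{O}_C$. With the preferred generator $\xi = \mu/\varphi^{-1}(\mu)$ and after the indicated normalization that removes the unit $\alpha$, that theorem gives $\TC^{-}_*(\mathcal{O}_C,\mathbb{Z}_p) = A[u,v]/(uv-\xi)$ and $\TP_*(\mathcal{O}_C,\mathbb{Z}_p) = A[\sigma^{\pm1}]$ with $A = A_{\inf}(\mathcal{O}_C)$, and with $\varphi(u) = \sigma$, $\operatorname{can}(u) = \xi\sigma$, $\varphi(v) = \varphi(\xi)\sigma^{-1}$, $\operatorname{can}(v) = \sigma^{-1}$. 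Since both sides are concentrated in even degrees, the long exact sequence of the equalizer identifies $\TC_{2m}$ with the kernel, and $\TC_{2m-1}$ with the cokernel, of $\varphi - \operatorname{can}\colon \TC_{2m}^{-} \to \TP_{2m}$. Choosing the free generator $u^m$ for $m \geq 0$ and $v^{|m|}$ for $m < 0$, this map becomes the $\mathbb{Z}_p$-linear endomorphism $T_m$ of $A$ given by $T_m(a) = \varphi(a) - \xi^m a$ if $m \geq 0$ and $T_m(a) = \varphi(a)\varphi(\xi)^{|m|} - a$ if $m < 0$.

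The crucial identity is $\varphi(\varphi^{-1}(\mu)) = \mu = \xi \cdot \varphi^{-1}(\mu)$, from which $\varphi(\varphi^{-1}(\mu)^m) = \xi^m \varphi^{-1}(\mu)^m$, so $\varphi^{-1}(\mu)^m \in \ker T_m$ for $m \geq 0$. To see this element generates $\ker T_m$ as a $\mathbb{Z}_p$-module, I would reduce modulo $p$ and work in $A/p = \mathcal{O}_{C^{\flat}}$, exploiting the characteristic-$p$ identity $\bar{\mu} = \overline{\varphi^{-1}(\mu)}^p$, which yields $\bar{\xi} = \overline{\varphi^{-1}(\mu)}^{p-1}$. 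Any $\bar{a}$ in the integral domain $\mathcal{O}_{C^{\flat}}$ satisfying $\bar{a}^p = \bar{\xi}^m \bar{a}$ is then forced to have the form $\zeta\cdot\overline{\varphi^{-1}(\mu)}^m$ with $\zeta \in \mathbb{F}_p \subset \mathbb{Z}_p$; an inductive lift along powers of $p$, using $p$-adic completeness of $A$ and the fact that the equation descends to each $A/p^n$, pins $a$ to a $\mathbb{Z}_p$-multiple of $\varphi^{-1}(\mu)^m$. For $m < 0$, any solution of $T_m(a) = 0$ would make $a\xi^{|m|}$ a Frobenius fixed point of $A$, hence an element of $W(\mathbb{F}_p) = \mathbb{Z}_p$; applying $\theta$ gives $a\xi^{|m|} \in \mathbb{Z}_p$ with $\theta(a\xi^{|m|}) = 0$ in the characteristic-zero ring $\mathcal{O}_C$, which forces $a = 0$.

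The main obstacle will be the surjectivity of each $T_m$, which is what gives the vanishing of all $\TC_{2m-1}$ in one stroke. By $p$-adic completeness of $A$, this reduces to surjectivity modulo $p$, i.e., to surjectivity of the Artin--Schreier-type operator $x \mapsto x^p - \bar{\xi}^m x$ (and its counterpart for $m<0$) on the perfect valuation ring $\mathcal{O}_{C^{\flat}}$, whose residue field $\overline{\mathbb{F}_p}$ is algebraically closed. I would handle this by successive approximation: given $y \in \mathcal{O}_{C^{\flat}}$, start from $x_0 = y^{1/p}$, available since $\mathcal{O}_{C^{\flat}}$ is perfect, then repeatedly correct the residual by solving a companion equation of the same shape whose right-hand side has strictly larger $\bar\xi$-adic valuation; convergence follows from completeness of $\mathcal{O}_{C^{\flat}}$ in its valuation topology, and the unit-valuation case is closed out by Hensel's lemma applied to the separable polynomial $X^p - \bar\xi^m X - y$ over the algebraically closed residue field. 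Granting surjectivity, we obtain $\TC_{2m}(\mathcal{O}_C,\mathbb{Z}_p) \cong \mathbb{Z}_p$ for $m \geq 0$, generated by an element whose image in $\TC^{-}_{2m}$ is $\varphi^{-1}(\mu)^m u^m$, and $\TC_n = 0$ in all other degrees. Multiplicativity of the equalizer then yields the symmetric-algebra description, since if $\tau \in \TC_2$ denotes the generator with image $\varphi^{-1}(\mu) u$, then $\tau^m$ maps to $\varphi^{-1}(\mu)^m u^m$, which is a generator of $\TC_{2m}$.
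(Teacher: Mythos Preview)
Your overall strategy matches the paper's: both compute $\TC_*(\mathcal{O}_C,\mathbb{Z}_p)$ degreewise as kernel and cokernel of $\varphi - \operatorname{can}$ using Theorem~\ref{thm:tpperfectoid}, and both reduce modulo $p$ to equations over $\mathcal{O}_{C^\flat}$. Your identification of the kernel for $m \geq 0$ via $\varphi(\varphi^{-1}(\mu)) = \xi\,\varphi^{-1}(\mu)$ is essentially the paper's argument run backwards; the paper instead iterates $\alpha^m\varphi(a) = \xi^m a$ to obtain $\varphi(a) \in \bigcap_r \xi_r^m A = \mu^m A$. (One small inconsistency: you cannot simultaneously fix $\xi = \mu/\varphi^{-1}(\mu)$ and normalize $\alpha$ to $1$; the paper keeps $\alpha$ throughout.) Your claim for $m<0$, that $a\xi^{|m|}$ becomes Frobenius-fixed, is not correct: the equation $\varphi(a)\varphi(\xi)^{|m|}=a$ gives $\varphi(a\xi^{|m|})=a$, which is not a fixed-point statement. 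The paper simply bypasses all $m<0$ by citing the general fact that $\TC(\mathcal{O}_C,\mathbb{Z}_p)$ is concentrated in degrees $\geq -1$.

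The substantive gap is your surjectivity argument for $m \geq 1$. The successive approximation you sketch does not converge: starting from $x_0 = y^{1/p}$ and correcting by $\delta_{k+1} = (\bar\xi^{\,m}\delta_k)^{1/p}$, one computes $v(\delta_k) \to m\,v(\bar\xi)/(p-1)$, a finite limit, so the series $\sum \delta_k$ diverges. Hensel's lemma does not rescue this, because the derivative of $X^p - \bar\xi^{\,m} X - y$ is $-\bar\xi^{\,m}$, which lies in the maximal ideal for $m \geq 1$; modulo the maximal ideal the polynomial becomes $X^p - \bar y$, whose unique root has multiplicity $p$. The paper's argument is different and short: since $C^\flat$ is algebraically closed, pick any root $a \in C^\flat$ of $\alpha^m a^p - \xi^m a = c$, and then use the ultrametric inequality to force $|a| \leq 1$. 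Indeed, if $|a| > 1$ then $|\alpha^m a^p| = |a|^p > |a| \geq |\xi^m a|$, whence $|\alpha^m a^p - \xi^m a| = |a|^p > 1 \geq |c|$, a contradiction. This is the missing idea.
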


\begin{proof}We let $\epsilon \in T_p(C^*)$ and
$\xi ,\mu \in A_{\inf}(\mathcal{O}_C)$ be as above. According to
Theorem~\ref{thm:tpperfectoid}, the even (resp.~odd) $p$-adic homology
groups of $\TC(\mathcal{O}_C)$ are given by the kernel
(resp.~cokernel) of the $\mathbb{Z}_p$-linear map
$$\begin{xy}
(0,0)*+{ A_{\inf}(\mathcal{O}_C)[u,v]/(uv-\xi) }="1";
(44,0)*+{ A_{\inf}(\mathcal{O}_C)[\sigma^{\pm1}] }="2";
{ \ar^-{\varphi-\operatorname{can}} "2";"1";};
\end{xy}$$
given by
$$\begin{aligned}
(\varphi - \operatorname{can})(a \cdot u^m) 
{} & = (\alpha^m\varphi(a) - \xi^m a) \cdot \sigma^m \cr
(\varphi - \operatorname{can})(a \cdot v^m)
{} & = (\alpha^{-m}\varphi(\xi)^m\varphi(a) - a) \cdot
\sigma^{-m}, \cr
\end{aligned}$$
where $m \geq 0$ is an integer, and where
$\alpha \in A_{\inf}(\mathcal{O}_C)$ is a fixed unit. 
We need only consider the top formula, since we know, for general
reasons, that the $p$-adic homotopy groups of $\TC(\mathcal{O}_C)$ are
concentrated in degrees $\geq -1$. 

We first prove that an element $a \cdot u^m$ in the image of the map
$$\xymatrix{
{ \TC_{2m}(\mathcal{O}_C,\mathbb{Z}_p) } \ar[r] &
{ \TC_{2m}^{-}(\mathcal{O}_C,\mathbb{Z}_p) } \cr
}$$
is of the form $a = \varphi^{-1}(\mu)^mb$, for some $b \in
A_{\inf}(\mathcal{O}_C)$. The element $b$ is uniquely determined by
$a$, since $\mathcal{O}_C$, and hence, $A_{\inf}(\mathcal{O}_C)$ is an
integral domain. This image consists of the elements $a \cdot u^m$,
where $a \in A_{\inf}(\mathcal{O}_C)$ satisfies
$$\alpha^m\varphi(a) = \xi^ma.$$
Rewriting this equation in the form
$$a = \varphi^{-1}(\alpha)^{-m}\varphi^{-1}(\xi)^m\varphi^{-1}(a),$$
we find inductively that for all $r \geq 1$,
$$a = \varphi^{-1}(\alpha_r)^{-m}\varphi^{-1}(\xi_r)^m\varphi^{-r}(a),$$
where $\alpha_r = \prod_{0 \leq i < r}\varphi^{-i}(\alpha)$ and
$\xi_r = \prod_{0 \leq i < r}\varphi^{-i}(\xi)$. Therefore, we have
$$\textstyle{
\varphi(a) \in \bigcap_{r \geq 1} \xi_r^mA_{\inf}(\mathcal{O}_C) =
\mu^mA_{\inf}(\mathcal{O}_C), }$$
as desired. Moreover, the element
$a \cdot u^m = \varphi^{-1}(\mu)^mb \cdot u^m$ belongs to the image of
the map above if and only if $b \in A_{\inf}(\mathcal{O}_C)$ solves
the equation
$$\alpha^m \varphi(b) = b.$$
Indeed, the elements $\mu,\xi \in A_{\inf}(\mathcal{O}_C)$ satisfy
$\mu = \xi\varphi^{-1}(\mu)$.

To complete the proof, it suffices to show that the canonical map
$$\xymatrix{
{ \operatorname{Sym}_{\hskip.5pt\mathbb{Z}/p}(\TC_2(\mathcal{O}_C,\mathbb{Z}/p)) }
\ar[r] &
{ \TC_*(\mathcal{O}_C,\mathbb{Z}/p) } \cr
}$$
is an isomorphism, and that the $\mathbb{Z}/p$-vector space
$\TC_2(\mathcal{O}_C,\mathbb{Z}/p)$ has dimension $1$. First, to show
that $\TC_{2m-1}(\mathcal{O}_C,\mathbb{Z}/p)$ is zero, we must show
that for every $c \in \mathcal{O}_{C^{\flat}}$, there exists $a \in
\mathcal{O}_{C^{\flat}}$ such that
$$\alpha^ma^p - \xi^ma = c.$$
The ring $\mathcal{O}_{C^{\flat}}$ is complete with respect to the
non-archimedean absolute value given by $\lvert x \rvert_{C^{\flat}} =
\lvert x^{\#}\rvert_C$, and its quotient field $C^{\flat}$ is
algebraically closed. Hence, there exists $a \in C^{\flat}$ that
solves the equation in question, and we must show that
$\lvert a \rvert_{C^{\flat}} \leq 1$. If
$\lvert a \rvert = \lvert a \rvert_{C^{\flat}} > 1$, then
$$\lvert \alpha^ma^p \rvert = \lvert a \rvert^p > \lvert a \rvert \geq
\lvert \xi \rvert^m \lvert a \rvert = \lvert \xi^ma \rvert,$$
and since $\lvert \phantom{x} \rvert$ is non-archimedean, we conclude
that
$$\lvert \alpha^ma^p - \xi^m a\rvert = \lvert \alpha^ma^p \rvert > 1
\geq \lvert c \rvert.$$
Hence, every solution $a \in C^{\flat}$ to the equation in
question is in $\mathcal{O}_{C^{\flat}}$. This shows that the group
$\TC_{2m-1}(\mathcal{O}_C,\mathbb{Z}/p)$ is zero.

Finally, we determine the $\mathbb{Z}/p$-vector space 
$\TC_{2m}(\mathcal{O}_C,\mathbb{Z}/p)$, which we have identified with
the subspace of $\TC_{2m}^{-}(\mathcal{O}_C,\mathbb{Z}/p)$ that
consists of the elements of the form
$b \varphi^{-1}(\mu)^m \cdot u^m$, where
$b \in \mathcal{O}_{C^{\flat}}$ satisfies the equation 
$$\alpha^mb^{\hskip.8pt p}  = b.$$
Since $C^{\flat}$ is algebraically closed, there are $p$ solutions to
this equations, namely, $0$ and the $(p-1)$th roots of $\alpha^{-m}$,
all of which are units in $\mathcal{O}_{C^{\flat}}$. This shows that
$\TC_{2m}(\mathcal{O}_C,\mathbb{Z}/p)$ is a $\mathbb{Z}/p$-vector
space of dimension $1$, for all $m \geq 0$. It remains only to notice
that if $b_1$ and $b_2$ satisfy $\alpha^{m_1}b_1^{\hskip.8pt p} = b_1$ and
$\alpha^{m_2}b_2^{\hskip.8pt p} = b_2$, respectively, then $b = b_1b_2$ satisfies
$\alpha^{m_1+m_2}b^{\hskip.8pt p} = b$, which shows that the map in
the statement is an isomorphism.
\end{proof}

We thank Antieau--Mathew--Morrow for sharing the elegant proof of
the following statement with us.

\begin{lemma}\label{lem:bottperiodicity}With notation as in
Theorem~\ref{thm:tcoc}, the
map\index{$\mathcal{O}_C$!$K$-theory of}
$$\xymatrix{
{ K(\mathcal{O}_C,\mathbb{Z}_p) } \ar[r]^-{j^*} &
{ K(C,\mathbb{Z}_p) } \cr
}$$
is an equivalence.
\end{lemma}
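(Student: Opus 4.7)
The plan is to deduce the result from a localization fiber sequence combined with the $\TC$-calculation of Theorem~\ref{thm:tcoc} and the Clausen--Mathew--Morrow theorem. First, applying the Bass--Thomason--Trobaugh localization theorem to the open immersion $j \colon \operatorname{Spec}(C) \hookrightarrow \operatorname{Spec}(\mathcal{O}_C)$ produces a fiber sequence of nonconnective $K$-theory spectra
$$K^{V(p)}(\mathcal{O}_C) \to K(\mathcal{O}_C) \to K(C),$$
where the leftmost term is the $K$-theory of the stable $\infty$-category of perfect $\mathcal{O}_C$-complexes with $p$-power torsion cohomology. Here I use that $\mathcal{O}_C$ is a rank-one valuation ring, so that $V(p) \subset \operatorname{Spec}(\mathcal{O}_C)$ is the unique closed point and its open complement is exactly $\operatorname{Spec}(C)$. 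It thus suffices to show that $K^{V(p)}(\mathcal{O}_C, \mathbb{Z}_p) \simeq 0$.

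Second, since any finitely generated $p$-power torsion module over the valuation ring $\mathcal{O}_C$ is killed by a single power $p^N$, a perfect $\mathcal{O}_C$-complex with $p$-power torsion cohomology is a perfect $\mathcal{O}_C/p^N$-complex for some $N$. This yields
$$K^{V(p)}(\mathcal{O}_C) \simeq \operatorname{colim}_N K(\mathcal{O}_C/p^N),$$
reducing the question to the vanishing of this filtered colimit after $p$-completion.

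Third, I would invoke the Clausen--Mathew--Morrow theorem for the (trivially) henselian pairs $(\mathcal{O}_C/p^N, p)$, whose common reduction $\mathcal{O}_C/p$ has surjective Frobenius since $\mathcal{O}_C$ is perfectoid. This gives cartesian squares
$$K(\mathcal{O}_C/p^N, \mathbb{Z}_p) \simeq K(\mathcal{O}_C/p, \mathbb{Z}_p) \times_{\TC(\mathcal{O}_C/p, \mathbb{Z}_p)} \TC(\mathcal{O}_C/p^N, \mathbb{Z}_p).$$
Passing to the colimit in $N$ and comparing with the analogous cartesian square for $(\mathcal{O}_C, p)$, the task reduces to checking that the map $\operatorname{colim}_N \TC(\mathcal{O}_C/p^N, \mathbb{Z}_p) \to \TC(\mathcal{O}_C, \mathbb{Z}_p)$ is compatible in the right sense; continuity of $\THH$ and the explicit description of $\TC(\mathcal{O}_C, \mathbb{Z}_p)$ from Theorem~\ref{thm:tcoc} should close the argument. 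The main obstacle is the last step: tracking the Frobenius-twisted data along the tower carefully enough to see that the relative $K$-piece is nullhomotopic $p$-adically, so that $K^{V(p)}(\mathcal{O}_C,\mathbb{Z}_p)$ indeed vanishes and the map $j^*$ becomes an equivalence.
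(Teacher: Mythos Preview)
Your step 2 contains a genuine error. The assertion that every perfect $\mathcal{O}_C$-complex with $p$-power torsion cohomology arises from a perfect $\mathcal{O}_C/p^N$-complex for some $N$ is false: the module $\mathcal{O}_C/p \oplus \mathcal{O}_C/p^2$ is perfect over $\mathcal{O}_C$ and killed by $p^2$, but has infinite projective dimension over $\mathcal{O}_C/p^N$ for every $N \geq 2$ (and is not an $\mathcal{O}_C/p$-module at all). More structurally, the pushforward $\operatorname{Perf}_{\mathcal{O}_C/p^N} \to \operatorname{Perf}_{\mathcal{O}_C/p^{N+1}}$ does not exist, since $\mathcal{O}_C/p^N$ itself has infinite projective dimension over $\mathcal{O}_C/p^{N+1}$; so the filtered system underlying your colimit is not even defined. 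You are implicitly conflating $K$ with $K'$: what d\'{e}vissage actually gives (after identifying perfect with bounded coherent, using that $\mathcal{O}_C$ has finite global dimension) is that the fiber of $j^*$ is $K'(\mathcal{O}_C/\pi)$ for a pseudouniformizer $\pi$, but then the Clausen--Mathew--Morrow input of step 3, which concerns $K$ of perfect complexes, no longer applies directly. Even granting all of this, your final step is, as you acknowledge, only a hope; the proposed continuity of $\TC$ along the tower $\{\mathcal{O}_C/p^N\}$ does not follow from the standard continuity statements for $\THH$.

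The paper's argument avoids all of this by \emph{tilting}. Since $\mathcal{O}_C$ is a valuation ring, it is coherent of finite global dimension, so $K(\mathcal{O}_C) \simeq K'(\mathcal{O}_C)$, and Quillen's localization for the abelian category of coherent modules gives a fiber sequence with fiber $K'(\mathcal{O}_C/\pi)$. The same applies to $\mathcal{O}_{C^\flat}$, and one can choose $\pi, \pi^\flat$ with $\mathcal{O}_C/\pi \cong \mathcal{O}_{C^\flat}/\pi^\flat$, so the two fiber terms agree. This reduces the lemma to showing that $K(\mathcal{O}_{C^\flat},\mathbb{Z}_p) \to K(C^\flat,\mathbb{Z}_p)$ is an equivalence; both rings are perfect local $\mathbb{F}_p$-algebras, so both sides are $\mathbb{Z}_p$, and the map, being one of $\mathbb{E}_\infty$-rings, is forced to be an equivalence. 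No $\TC$-computation for $\mathcal{O}_C$ enters.
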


\begin{proof}For every ring $R$, the category of coherent $R$-modules
is abelian, and we define $K'(R)$ to be the algebraic $K$-theory of this
abelian category. If $R$ is coherent as an $R$-module, then the
category of coherent $R$-modules contains the category of finite
projective $R$-modules as a full exact subcategory. So in this situation
the canonical inclusion induces a map of $K$-theory spectra
$$\xymatrix{
{ K(R) } \ar[r] &
{ K'(R). } \cr
}$$
If, in addition, the ring $R$ is of finite global dimension, then the
resolution theorem~\cite[Theorem~3]{quillen} shows that this map is an
equivalence. In particular, this is so, if $R$ is a valuation
ring. For every finitely generated ideal in $R$ is principal, so $R$
is coherent, and it follows form~\cite{berstein} that $R$ is of
finite global dimension.

We now choose any pseudouniformizer $\pi \in \mathcal{O}_C$ and
apply the localization theorem~\cite[Theorem~5]{quillen} to the
abelian category of coherent $\mathcal{O}_C$-modules and the full
abelian subcategory of coherent $\mathcal{O}_C/\pi$-modules. The
localization sequence then takes the form
$$\xymatrix{
{ K'(\mathcal{O}_C/\pi) } \ar[r]^-{i_*} &
{ K(\mathcal{O}_C) } \ar[r]^-{j^*} &
{ K(C), }
}$$
since $\mathcal{O}_C$ and $C$ both are valuation rings. In a similar
manner, we obtain, for any pseudouniformizer $\pi^{\flat} \in
\mathcal{O}_{C^{\flat}}$, the localization sequence
$$\xymatrix{
{ K'(\mathcal{O}_{C^{\flat}}/\pi^{\flat}) } \ar[r]^-{i_*} &
{ K(\mathcal{O}_{C^{\flat}}) } \ar[r]^-{j^*} &
{ K(C^{\flat}). } \cr
}$$
We may choose $\pi$ and $\pi^{\flat}$ such that 
$\mathcal{O}_C/\pi$ and $\mathcal{O}_{C^{\flat}}/\pi^{\flat}$
are isomorphic rings, so we conclude that the lemma is equivalent to
the statement that
$$\xymatrix{
{ K(\mathcal{O}_{C^{\flat}},\mathbb{Z}_p) } \ar[r]^-{j^*} &
{ K(C^{\flat},\mathbb{Z}_p) } \cr
}$$
is an equivalence. But $\mathcal{O}_{C^{\flat}}$ and $C^{\flat}$ are both
perfect local $\mathbb{F}_p$-algebras, so the domain and target are
both equivalent to $\mathbb{Z}_p$. Finally, the map in question is
a map of $\mathbb{E}_{\infty}$-algebras in spectra, so it is
necessarily an equivalence.
\end{proof}

\begin{corollary}[Bott periodicity]\label{cor:bottperiodicity}The
canonical map of graded rings
$$\xymatrix{
{ \operatorname{Sym}_{\hskip.5pt\mathbb{Z}}(K_2^{\operatorname{top}}(\mathbb{C})) } \ar[r] &
{ K_*^{\operatorname{top}}(\mathbb{C}) } \cr
}$$
is an isomorphism, and $K_2^{\operatorname{top}}(\mathbb{C})$ is a
free abelian group of rank $1$.
\end{corollary}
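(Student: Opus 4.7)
The plan is to deduce Bott periodicity from Theorem~\ref{thm:tcoc} via the cyclotomic trace and a chain of comparisons that moves from $\mathcal{O}_C$ to $C$ and then to $\mathbb{C}$. Since $(\mathcal{O}_C, p\mathcal{O}_C)$ is a henselian pair and the Frobenius on $\mathcal{O}_C/p$ is surjective, the Clausen--Mathew--Morrow theorem recalled in the introduction provides an equivalence
$$K(\mathcal{O}_C)_p^{\wedge} \simeq \tau_{\geq 0}\TC(\mathcal{O}_C)_p^{\wedge}.$$
Combined with Theorem~\ref{thm:tcoc}, this identifies $K_*(\mathcal{O}_C,\mathbb{Z}_p)$ in non-negative degrees with a polynomial algebra $\mathbb{Z}_p[\beta_p]$ on a generator of degree $2$, and Lemma~\ref{lem:bottperiodicity} transfers the same description to $K_*(C,\mathbb{Z}_p)$.

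Next, I would use Suslin rigidity to cross over to $\mathbb{C}$. Both $C$ and $\mathbb{C}$ are algebraically closed fields of characteristic zero containing $\overline{\mathbb{Q}}$, so the inclusions induce equivalences
$$K(\mathbb{C},\mathbb{Z}/p^n) \simeq K(\overline{\mathbb{Q}},\mathbb{Z}/p^n) \simeq K(C,\mathbb{Z}/p^n),$$
hence $K_*(\mathbb{C},\mathbb{Z}_p) \simeq \mathbb{Z}_p[\beta_p]$ in non-negative degrees. Suslin's comparison theorem asserts that the natural map $K(\mathbb{C}) \to K^{\operatorname{top}}(\mathbb{C})$ is an equivalence on $\mathbb{Z}/n$-Moore spectra, so this calculation passes to $K^{\operatorname{top}}_*(\mathbb{C},\mathbb{Z}_p) = \mathbb{Z}_p[\beta_p]$ in non-negative degrees for every prime $p$.

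The remaining step is to assemble the family of $p$-adic descriptions into the integral statement. Since $K^{\operatorname{top}}(\mathbb{C})$ is a connective $\mathbb{E}_{\infty}$-ring spectrum, its $p$-adic and rational behavior glue via the arithmetic fracture square. Rationally, $K^{\operatorname{top}}_*(\mathbb{C}) \otimes \mathbb{Q}$ is obtained from Borel's computation for $\overline{\mathbb{Q}}$ combined with Suslin rigidity, yielding $\mathbb{Q}[\beta]$. The main obstacle lies in this last step: one must verify that the generators $\beta_p$ at the various primes and the rational generator cohere into a single integral class in $K^{\operatorname{top}}_2(\mathbb{C}) \simeq \mathbb{Z}$. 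This coherence is forced because every comparison map in the chain is a map of $\mathbb{E}_{\infty}$-ring spectra, so each $\beta_p$ is characterized by its multiplicative role and the various generators descend from a common multiplicative class, producing the polynomial ring $\mathbb{Z}[\beta]$ as claimed.
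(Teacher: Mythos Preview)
Your $p$-adic chain of equivalences is essentially the paper's argument: Clausen--Mathew--Morrow for $\mathcal{O}_C$, Theorem~\ref{thm:tcoc}, Lemma~\ref{lem:bottperiodicity}, and Suslin's theorems. The paper organizes the Suslin step slightly differently, choosing a ring homomorphism $f \colon \mathbb{C} \to C$ directly rather than passing through $\overline{\mathbb{Q}}$, but this is cosmetic.

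The genuine gap is in your integral assembly. Your rational computation is incorrect: Suslin rigidity concerns finite coefficients only and says nothing rationally, and Borel's theorem does not give $K_*(\overline{\mathbb{Q}}) \otimes \mathbb{Q} \simeq \mathbb{Q}[\beta]$. Indeed $K_1(\overline{\mathbb{Q}}) \otimes \mathbb{Q} = \overline{\mathbb{Q}}^{*} \otimes \mathbb{Q}$ is infinite-dimensional, and the map $K(\mathbb{C}) \to K^{\operatorname{top}}(\mathbb{C})$ is far from a rational equivalence. So the fracture-square approach, as you have written it, does not close. The subsequent ``coherence'' argument for the $\beta_p$ is also not an argument: being a multiplicative generator does not by itself pin down a class across primes or produce an integral lift.

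The paper avoids this entirely by a much lighter observation: the homotopy groups of $K^{\operatorname{top}}(\mathbb{C})$ are finitely generated abelian groups (a Serre-class argument using only that the homology of $BU \times \mathbb{Z}$ is finitely generated, with no appeal to Bott periodicity). For a finitely generated abelian group $A$, the family of $p$-completions $A_p^{\wedge}$ determines $A$; in particular $A_p^{\wedge} \simeq \mathbb{Z}_p$ for all $p$ forces $A \simeq \mathbb{Z}$, and $A_p^{\wedge} = 0$ for all $p$ forces $A = 0$. Together with the $p$-adic computation and the multiplicativity of the comparison maps, this yields the integral statement directly. Replacing your last paragraph with this finite-generation reduction would make the proof complete and, in fact, identical to the paper's.
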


\begin{proof}The homotopy groups of
$K^{\operatorname{top}}(\mathbb{C})$ are finitely
generated\footnote{\,This follows by a Serre class argument from the
  fact that the homology groups of the underlying space are 
  finitely generated.}, so it 
suffices to prove the analogous statements for the $p$-adic 
homotopy groups, for all prime numbers $p$. We fix $p$, let
$C$ be as in Theorem~\ref{thm:tcoc}, and choose a ring homomorphism
$f \colon \mathbb{C} \to C$. By Suslin~\cite{suslin,suslin1}, the
canonical maps
$$\xymatrix{
{ K^{\operatorname{top}}(\mathbb{C}) } &
{ K(\mathbb{C}) } \ar[l] \ar[r]^-{f^*} &
{ K(C) } \cr
}$$
become weak equivalences upon $p$-completion. Moreover, by
Lemma~\ref{lem:bottperiodicity}, the map
$j^* \colon K(\mathcal{O}_C) \to K(C)$ becomes a weak equivalence
after $p$-completion. The ring $\mathcal{O}_C$ is a henselian local
ring with algebraically closed residue field $k$ of characteristic
$p$. Therefore, by Clausen--Mathew--Morrow~\cite{clausenmathewmorrow},
the cyclotomic trace map induces an
equivalence\index{Clausen--Mathew--Morrow theorem}
$$\xymatrix{
{ K(\mathcal{O}_C,\mathbb{Z}_p) } \ar[r]^-{\tr} &
{ \TC(\mathcal{O}_C,\mathbb{Z}_p), } \cr
}$$
so the statement follows from Theorem~\ref{thm:tcoc}.
\end{proof}
\index{Bott periodicity|)}

\section{Group rings}\label{sec:grouprings}
\index{group ring!topological cyclic homology of|(}
\index{topological cyclic homology!of group ring|(}

Let $G$ be a discrete group. We would like to understand the
topological cyclic homology of the group ring $R[G]$, where $R$ is a
ring or, more generally, a connective $\mathbb{E}_1$-algebra in
spectra. Since the assignment $G \mapsto \TC(R[G])$ is functorial in
the 2-category of groups,\footnote{By the latter we mean the full
  subcategory of the $\infty$-category of spaces consisting of spaces
  of the form $BG$. Concretely objects are groups, morphisms are group
  homomorphisms and 2-morphisms are conjugations.} we get an
``assembly'' map\index{assembly map}
$$\xymatrix{
{ \TC(R) \otimes BG_+ } \ar[r] &
{ \TC(R[G]), } \cr
}$$
and what we will actually do here is to consider the cofiber of this
map. By~\cite[Theorem~1.2]{luckreichrognesvarisco}, topological cyclic homology for a
given group $G$ can be assembled from the 
cyclic subgroups of $G$. We will focus on the case $G = C_p$ of a
cyclic group of prime order $p$, but the methods that we present here
can be generalized to the case  of cyclic $p$-groups. We
will be interested in the $p$-adic homotopy type of the cofiber. To
this end, we remark that the $p$-completion of $\THH(R)$, which we
denote by $\THH(R,\mathbb{Z}_p)$ as before, inherits a cyclotomic structure. For $R$ connective, we have 
$$\TC(R, \mathbb{Z}_p) \simeq \TC(\THH(R, \mathbb{Z}_p)).$$
The formula we give involves the non-trivial extension of groups
$$\xymatrix{
{ C_p } \ar[r] &
{ \mathbb{T}_p } \ar[r] &
{ \mathbb{T}. } \cr
}$$
The middle group $\mathbb{T}_p$ is a circle, but the right-hand
map is a $p$-fold cover, and by restriction along this map, a spectrum
with $\mathbb{T}$-action $X$ gives rise to a spectrum with
$\mathbb{T}_p$-action, which we also write $X$.

\begin{theorem}\label{thm:groupring}For a connective
$\mathbb{E}_1$-algebra $R$ in spectra, there is a natural cofiber
sequence of spectra
$$\xymatrix{
{ \TC(R,\mathbb{Z}_p) \otimes BC_{p+} } \ar[r] &
{ \TC(R[C_p],\mathbb{Z}_p) } \ar[r] &
{ \THH(R,\mathbb{Z}_p)_{h\mathbb{T}_p}[1] \otimes C_p, } \cr
}$$
where $C_p$ is considered as a pointed set with basepoint $1$. 
\end{theorem}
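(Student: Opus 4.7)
The plan is to apply the Nikolaus--Scholze formula $\TC(X)=\mathrm{fib}(\varphi-\mathrm{can}\colon X^{h\mathbb{T}}\to X^{t\mathbb{T},\wedge}_p)$ to a decomposition of $\THH(R[C_p])$ coming from the cyclic bar construction. First I would use the identification $\THH(R[C_p])\simeq\THH(R)\otimes\Sigma^\infty_+ B^{\cy}(C_p)$ of cyclotomic spectra. Because $C_p$ is abelian, $B^{\cy}(C_p)$ splits as a $\mathbb{T}$-space into $p$ components indexed by $g\in C_p$: the identity component $(BC_p)_e=BC_p$ carries the trivial $\mathbb{T}$-action, while for $g\neq e$ the component $(BC_p)_g$ is a principal $\mathbb{T}$-bundle over $B\mathbb{T}_p$ with explicit model $E\mathbb{T}_p\times_{\mathbb{T}_p}\mathbb{T}$, where $\pi\colon\mathbb{T}_p\to\mathbb{T}$ is the $p$-fold cover.

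Next I would argue that $X_e:=\THH(R)\otimes BC_{p+}$ is a sub-cyclotomic-spectrum of $X:=\THH(R[C_p])$ whose inclusion realizes the assembly map after applying $\TC$. The key input is that the cyclotomic Frobenius on the suspension-spectrum factor $\Sigma^\infty_+ B^{\cy}(C_p)$ is controlled by the $p$-th power map on $C_p$; since $g^p=e$ for every $g\in C_p$, this Frobenius sends each component to the identity component of the Tate construction. Consequently the cofiber $X_{\neq e}:=X/X_e$ in cyclotomic spectra has underlying $\mathbb{T}$-spectrum $\bigvee_{g\neq e}\THH(R)\otimes(BC_p)_{g+}$ with trivial quotient Frobenius, and by the exactness of $\TC$ the cofiber sequence $\TC(X_e)\to\TC(X)\to\TC(X_{\neq e})$ realizes the desired cofiber sequence, once $\TC(X_e)$ is identified with $\TC(R,\mathbb{Z}_p)\otimes BC_{p+}$ using that $\TC$ commutes with the trivial-action tensor by $BC_{p+}$.

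To identify $\TC(X_{\neq e})$, I would exploit that $\mathbb{T}$ acts freely on each $(BC_p)_g$ with $g\neq e$ (being a principal $\mathbb{T}$-bundle), so the relevant Tate constructions of $X_{\neq e}$ over $C_p$ and $\mathbb{T}$ vanish after a diagonal-action argument. With $\varphi=0$ and these Tate vanishings, the Nikolaus--Scholze fiber simplifies to the $\mathbb{T}$-homotopy orbits of $X_{\neq e}$. The balanced-product model then yields, by a projection-formula calculation, an equivalence $(\THH(R)\otimes(BC_p)_{g+})_{h\mathbb{T}}\simeq\THH(R,\mathbb{Z}_p)_{h\mathbb{T}_p}$, with $\mathbb{T}_p$ acting through the cover $\pi$. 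Summing over the $p-1$ non-identity elements produces the wedge $\THH(R,\mathbb{Z}_p)_{h\mathbb{T}_p}\otimes C_p$, and the suspension $[1]$ arises from the stable-equalizer boundary that identifies the Nikolaus--Scholze fiber with the $\mathbb{T}$-homotopy orbits via the Tate fiber sequence.

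The main obstacle is the careful treatment of the quotient cyclotomic structure on $X_{\neq e}$: one must verify both that $X_e$ is genuinely a sub-cyclotomic-spectrum and that the cofiber has vanishing Frobenius, each of which reduces to a precise analysis of the $p$-th power map on $B^{\cy}(C_p)$. A subsidiary technical point is the projection-formula identification of $(\THH(R)\otimes(BC_p)_{g+})_{h\mathbb{T}}$ with $\THH(R,\mathbb{Z}_p)_{h\mathbb{T}_p}$, which relies on modelling the non-identity components of $B^{\cy}(C_p)$ as balanced products along the $p$-fold cover $\pi\colon\mathbb{T}_p\to\mathbb{T}$ and tracking the residual $\mathbb{T}_p$-action on $\THH(R)$.
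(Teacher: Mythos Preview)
Your proposal is correct and follows essentially the same route as the paper: decompose $LBC_p \simeq B^{\mathrm{cy}}(C_p)$ into the identity component $BC_p^{\mathrm{triv}}$ and the free-orbit components $BC_p^{\mathrm{res}}$, identify the inclusion of the former with the assembly map, and compute $\TC$ of the cofiber via Tate vanishing and the norm equivalence $(-)^{h\mathbb{T}}\simeq(-)_{h\mathbb{T}}[1]$.

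A few points where the paper's execution differs from your sketch are worth flagging. First, you propose to show that $X_e$ is a sub-cyclotomic-spectrum and that the quotient Frobenius vanishes by tracking the $p$-th power map on $B^{\mathrm{cy}}(C_p)$; the paper sidesteps this entirely. It observes that the assembly map is a map of cyclotomic spectra by naturality, that the forgetful functor $\operatorname{CycSp}\to\operatorname{Sp}^{B\mathbb{T}}$ creates colimits, and---crucially---that the target $(X\otimes BC_{p+}^{\mathrm{res}})^{tC_p}$ is already contractible, so the cyclotomic structure on the cofiber is unique and no analysis of the Frobenius is required. Your $p$-th-power argument is thus redundant once you have the Tate vanishing, and is also delicate to make precise (the induced map on a cofiber is not determined merely by knowing where the Frobenius on $X$ lands). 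Second, your ``diagonal-action argument'' for Tate vanishing is the substantive step: the free $\mathbb{T}$-action on $(BC_p)_g$ alone does not immediately kill the $C_p$-Tate of the \emph{diagonal} tensor with $\THH(R)$. The paper's Lemma~\ref{lem:tateorbitadvanced} handles this by reducing along the Postnikov tower to $X$ in a single degree with trivial action, then invoking the Tate orbit lemma $(X_{hC_p})^{tC_p}\simeq 0$. Finally, the identification $\TC(\THH(R,\mathbb{Z}_p)\otimes BC_{p+}^{\mathrm{triv}})\simeq\TC(R,\mathbb{Z}_p)\otimes BC_{p+}$ is not a formality; the paper appeals to~\cite[Theorem~2.7]{clausenmathewmorrow}.
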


We note that the right-hand term in the sequence above is
non-canonically equivalent to a $(p-1)$-fold sum of copies of
$\THH(R,\mathbb{Z}_p)_{h\mathbb{T}_p}[1]$. We do not determine the
boundary map in the sequence. The proof of Theorem \ref{thm:groupring}
requires some preparation and preliminary results. First, we recall
that for a spherical group ring 
$\mathbb{S}[G]$, there is a natural equivalence
$$\THH(\mathbb{S}[G]) \simeq \mathbb{S} \otimes LBG_+,$$
where $LBG = \mathrm{Map}(\mathbb{T}, BG)$ is the free loop
space. Moreover, the
equivalence is $\mathbb{T}$-equivariant for the $\mathbb{T}$-action on
$LBG$ induced from the action of $\mathbb{T}$ on itself by
multiplication. Hence, for general $R$, we have
$$\THH(R[G]) \simeq \THH(R) \otimes LBG_+,$$
where $\mathbb{T}$ acts diagonally on the right-hand side. Since $G$
is discrete, we further have a $\mathbb{T}$-equivariant decomposition
of spaces
$$\textstyle{ LBG \simeq \coprod BC_G(x), }$$
where $x$ ranges over a set of representatives of the conjugacy
classes of elements of $G$, and where $C_G(x) \subset G$ is the
centralizer of $x \in G$. The action by
$\mathbb{T} \simeq B\mathbb{Z}$ on $BC_G(x)$ is given by the map
$B\mathbb{Z} \times BC_G(x) \to BC_G(x)$ induced by the group
homomorphism $\mathbb{Z} \times C_G(x) \to C_G(x)$ that to $(n,g)$
assigns $x^ng$. Specializing to the case $G = C_p$, we get the
following description.

\begin{lemma}\label{lem:freeloopspace}There is a natural
$\mathbb{T}$-equivariant cofiber sequence of spaces
\[
BC_p^{\,\operatorname{triv}}  \to LBC_p \to 
(BC_p^{\,\operatorname{res}}) _+\otimes C_p,
\]
where $BC_p^{\,\operatorname{triv}}$ has the trivial $\mathbb{T}$-action and 
$BC_p^{\,\operatorname{res}} \simeq (\mathrm{pt})_{hC_p}$ has the
residual action by $\mathbb{T} = \mathbb{T}_p/C_p$. \footnote{This
comes from the fact that $\operatorname{pt}$ carries a (necessarily
trivial) $\mathbb{T}_p$-action. In a point set model, it can be
described as $E\hskip.7pt\mathbb{T}_p / C_p$.} \end{lemma}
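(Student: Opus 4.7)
The plan is to combine the general decomposition of $LBG$ into components indexed by conjugacy classes with a $\mathbb{T}$-equivariant analysis of each piece for $G = C_p$. Since $C_p$ is abelian, every conjugacy class is a singleton and every centralizer is $C_p$ itself, so $LBC_p \simeq \coprod_{x \in C_p} BC_p$, with the $\mathbb{T}$-action on the $x$-component arising from the group homomorphism $\mathbb{Z} \times C_p \to C_p$, $(n,g) \mapsto x^n g$. On the $x = 1$ component this depends only on $g$, giving the trivial $\mathbb{T}$-action and identifying this piece with $BC_p^{\,\operatorname{triv}}$; the first map of the cofiber sequence is then the inclusion of this component.

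For each $x \neq 1$, automatically a generator of $C_p$ since $p$ is prime, I would identify the $x$-component with $BC_p^{\,\operatorname{res}}$ equivariantly using the point-set model $BC_p = E\mathbb{T}_p / C_p$. The $C_p$-torsor over $\mathbb{T}$ classified by $x$ is the quotient map $\mathbb{T}_p \to \mathbb{T}$, and a loop of class $x$ lifts to a $C_p$-equivariant map $\tilde{\gamma} \colon \mathbb{T}_p \to E\mathbb{T}_p$, unique up to the free post-composition action of $C_p$ on $E\mathbb{T}_p$. The space of such lifts is therefore a $C_p$-cover of the $x$-component, and it carries a natural $\mathbb{T}_p$-action by pre-composition with translations. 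Crucially, the subgroup $C_p \subset \mathbb{T}_p$ acts via pre-composition by the same formula as the post-composition action on the target, by the equivariance of $\tilde{\gamma}$. Since this space of lifts is contractible and $\mathbb{T}_p$-free, it is $\mathbb{T}_p$-equivariantly equivalent to $E\mathbb{T}_p$, and passing to $C_p$-orbits on both sides yields a $\mathbb{T} = \mathbb{T}_p / C_p$-equivariant equivalence between the $x$-component and $E\mathbb{T}_p / C_p = BC_p^{\,\operatorname{res}}$.

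Assembling these pieces gives $LBC_p \simeq BC_p^{\,\operatorname{triv}} \sqcup \coprod_{x \neq 1} BC_p^{\,\operatorname{res}}$ as $\mathbb{T}$-spaces, so that collapsing the $x = 1$ component produces the cofiber $(BC_p^{\,\operatorname{res}})_+ \otimes C_p$ with $C_p$ pointed at $1$, as required. The main technical obstacle lies in the middle step: verifying that the $\mathbb{T}_p$-action on the space of lifts is free (one checks case by case that a non-zero $\tilde{t} \in \mathbb{T}_p$ combined with $C_p$-equivariance forces the lift to land in $(E\mathbb{T}_p)^{C_p} = \emptyset$) and that its contractibility is compatible with the equivariance, so it really does serve as $E\mathbb{T}_p$. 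Once this identification is in place, the cofiber sequence follows formally from the component decomposition.
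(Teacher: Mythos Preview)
Your approach is correct and is essentially the same as the paper's: the lemma is stated there as an immediate specialization of the general $\mathbb{T}$-equivariant decomposition $LBG \simeq \coprod_x BC_G(x)$ with the $\mathbb{T}$-action on the $x$-component induced by $(n,g) \mapsto x^n g$, and no further proof is given. Your additional point-set verification that the non-identity components carry precisely the residual $\mathbb{T}_p/C_p$-action on $E\mathbb{T}_p/C_p$ fills in detail the paper leaves implicit, but the underlying strategy is identical.
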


As a consequence, we obtain for every $\mathbb{E}_1$-algebra in spectra
$R$, a cofiber sequence of spectra with $\mathbb{T}$-action 
$$\xymatrix{
{ \THH(R) \otimes BC_{p+}^{\,\operatorname{triv}} } \ar[r] &
{ \THH(R[G]) } \ar[r] &
{ (\THH(R) \otimes BC_{p+}^{\,\operatorname{res}}) \otimes C_p, } \cr
}$$
where the left-hand map is the assembly map. To determine the
cyclotomic structure on the terms of this sequence, we prove the
following result.

\begin{lemma}\label{lem:tateorbitadvanced}Let $X$ be a spectrum with
$\mathbb{T}$-action that is bounded below, and let $\mathbb{T}$ act
diagonally on $X \otimes BC_{p+}^{\,\operatorname{res}}$. Then $(X
\otimes  BC_{p+}^{\,\operatorname{res}})^{tC_p} \simeq 0$.
\end{lemma}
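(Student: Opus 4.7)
The plan is to reduce to the Tate orbit lemma of Nikolaus--Scholze by rewriting $X \otimes BC_{p+}^{\,\operatorname{res}}$ as a homotopy orbit spectrum via the projection formula along the extension $C_p \to \mathbb{T}_p \to \mathbb{T}$.

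First, I would identify $BC_{p+}^{\,\operatorname{res}}$ with $\mathbb{S}_{hC_p}$ as a $\mathbb{T}$-spectrum, where $\mathbb{S}$ carries the trivial $\mathbb{T}_p$-action, homotopy orbits are taken along the subgroup $C_p \subset \mathbb{T}_p$, and the residual $\mathbb{T}_p/C_p \simeq \mathbb{T}$ acts on the result. This is immediate from the description $BC_p^{\,\operatorname{res}} = E\mathbb{T}_p/C_p$ given in the footnote to Lemma~\ref{lem:freeloopspace}, since $\Sigma^\infty_+ E\mathbb{T}_p \simeq \mathbb{S}$ as $\mathbb{T}_p$-spectra in the Borel sense. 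Next, I would apply the projection formula: the $p$-fold cover $f \colon \mathbb{T}_p \to \mathbb{T}$ induces a symmetric monoidal restriction functor $f^{*} \colon \operatorname{Sp}^{B\mathbb{T}} \to \operatorname{Sp}^{B\mathbb{T}_p}$ whose left adjoint is $(-)_{hC_p}$, so
\[
X \otimes \mathbb{S}_{hC_p} \simeq (f^{*}X \otimes \mathbb{S})_{hC_p} \simeq (f^{*}X)_{hC_p}
\]
as $\mathbb{T}$-spectra. Combining these gives $X \otimes BC_{p+}^{\,\operatorname{res}} \simeq (f^{*}X)_{hC_p}$ with the residual $\mathbb{T}_p/C_p \simeq \mathbb{T}$-action.

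Finally, I would restrict along $C_p \hookrightarrow \mathbb{T}$. The preimage of this $C_p$ in $\mathbb{T}_p$ under $f$ is $C_{p^2}$, and the quotient $C_{p^2}/C_p$ is canonically identified with the original subgroup $C_p \subset \mathbb{T}$. Setting $Y = f^{*}X|_{C_{p^2}}$, which is a bounded-below $C_{p^2}$-spectrum, one obtains a $C_p$-equivariant equivalence $X \otimes BC_{p+}^{\,\operatorname{res}} \simeq Y_{hC_p}$, where $C_p \simeq C_{p^2}/C_p$ acts residually. The Tate orbit lemma~\cite[Lemma~II.4.1]{nikolausscholze} now yields $(Y_{hC_p})^{tC_p} \simeq 0$, as required.

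The main obstacle is verifying the equivariance of the identifications in the first paragraph, in particular tracking that the residual $\mathbb{T}$-action produced by the projection formula matches the diagonal $\mathbb{T}$-action on $X \otimes BC_{p+}^{\,\operatorname{res}}$. Once the $C_p$-equivariant equivalence $X \otimes BC_{p+}^{\,\operatorname{res}} \simeq Y_{hC_p}$ is in hand, the vanishing is an immediate application of the Tate orbit lemma.
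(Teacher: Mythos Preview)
Your argument is correct, but it differs from the paper's. The paper reduces to the case of trivial $\mathbb{T}$-action via the Postnikov tower: one writes $X \simeq \lim_n \tau_{\le n} X$, argues that the canonical map
\[
(X \otimes BC_{p+}^{\,\operatorname{res}})^{tC_p} \longrightarrow \lim_n\bigl((\tau_{\le n}X \otimes BC_{p+})^{tC_p}\bigr)
\]
is an equivalence by a connectivity argument, and then observes that when $X$ is an Eilenberg--MacLane spectrum with trivial action one has $X \otimes BC_{p+}^{\,\operatorname{res}} \simeq X_{hC_p}$ directly, whereupon the Tate orbit lemma applies. Your route bypasses the Postnikov reduction entirely by invoking the projection formula for the adjunction $f_! \dashv f^*$ along $f \colon B\mathbb{T}_p \to B\mathbb{T}$, obtaining $X \otimes BC_{p+}^{\,\operatorname{res}} \simeq (f^*X)_{hC_p}$ equivariantly in one step, and then restricting to $C_{p^2} \subset \mathbb{T}_p$. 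This is cleaner and more conceptual, and it makes transparent that the result is really just the Tate orbit lemma for the bounded-below $C_{p^2}$-spectrum $f^*X|_{C_{p^2}}$. The paper's approach, on the other hand, is more elementary in that it requires no appeal to the projection formula or to tracking the $\mathbb{T}$-equivariance of that identification; the price is the extra Postnikov limit argument. The point you flag as the ``main obstacle'' is genuinely the only thing to check in your version, but for Borel spectra this projection formula is the standard one for pullback along a map of spaces, so it is routine.
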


\begin{proof}We write $X \simeq \lim_n \tau_{\leq n}X$ as the limit of
its Postnikov tower. The spectra $\tau_{\leq  n} X$ inherit a
$\mathbb{T}$-action, and the equivalence is
$\mathbb{T}$-equivariant. The map induced by the canonical projections,
$$\xymatrix{
{ X \otimes BC_{p+}^{\,\operatorname{res}} } \ar[r] &
{ \lim_n ( \tau_{\leq n} X \otimes BC_{p+} ), } \cr
}$$
is an equivalence, since the connectivity of the fibers tend to
infinity with $n$, and therefore, also the map 
$$\xymatrix{
{ (X \otimes BC_{p+}^{\,\operatorname{res}})^{tC_p} } \ar[r] &
{ \lim_n ((\tau_{\leq n} X \otimes BC_{p+})^{tC_p} ) } \cr
}$$
is an equivalence. Indeed, the analogous statements for homotopy
fixed points and homotopy orbits is respectively clear and a
consequence of the fact that the connectivity of the fibers tend to
infinity with $n$.

Since $X$ is bounded below,  we may assume that $X$ is
concentrated in a single degree with necessarily trivial
$\mathbb{T}$-action. As spectra with $\mathbb{T}$-action,
$$X \otimes BC_{p+}^{\,\operatorname{res}} \simeq X_{hC_p},$$
where the right-hand side has the residual $\mathbb{T}$-action. But
$(X_{hC_p})^{tC_p} \simeq 0$ by the Tate orbit lemma~\cite[Lemma
I.2.1]{nikolausscholze}, so the lemma follows.\index{Tate orbit lemma}
\end{proof}

\begin{corollary}\label{cor:tateorbitadvanced}Let $X$ be a spectrum
with $\mathbb{T}$-action that is bounded below and $p$-complete. The
spectrum $X \otimes BC_{p+}^{\,\operatorname{res}}$ with the diagonal
$\mathbb{T}$-action admits a unique cyclotomic structure, and,
with respect to this structure,
$$\TC(X \otimes BC_{p+}^{\,\operatorname{res}}) \simeq  X_{h\mathbb{T}_p}[1].$$
\end{corollary}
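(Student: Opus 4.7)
Write $Y = X \otimes BC_{p+}^{\,\operatorname{res}}$; since $X$ is bounded below and $p$-complete, so is $Y$. The first step is to show that the space of cyclotomic structures on $Y$ is contractible. Such a structure consists of $\mathbb{T}$-equivariant Frobenii $\varphi_q \colon Y \to Y^{tC_q}$ for every prime $q$. In each case the target is zero: for $q = p$ this is exactly the conclusion of Lemma~\ref{lem:tateorbitadvanced}, and for $q \neq p$ the prime $q$ acts invertibly on the $p$-complete spectrum $Y$, so $Y^{tC_q} \simeq 0$ by the general vanishing of Tate constructions when the group order acts as a unit. Thus each $\varphi_q$ lives in a contractible mapping space, and $Y$ admits a unique cyclotomic structure (with all Frobenii essentially zero).

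With the cyclotomic structure pinned down, the plan is to evaluate the formula of Proposition~\ref{prop:tcformula},
\[
\TC(Y) \simeq \operatorname{fib}\bigl(\varphi - \operatorname{can} \colon \TC^{-}(Y) \to \TP(Y)^{\wedge}\bigr).
\]
I first observe that $\TP(Y)^{\wedge} \simeq 0$: by the Tate orbit lemma~\cite[Lemma II.4.2]{nikolausscholze}, there is a $p$-adic equivalence $Y^{t\mathbb{T}} \simeq (Y^{tC_p})^{h\mathbb{T}/C_p}$, and the right-hand side vanishes because $Y^{tC_p} \simeq 0$. Consequently the equalizer collapses and $\TC(Y) \simeq \TC^{-}(Y) = Y^{h\mathbb{T}}$.

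It remains to identify $Y^{h\mathbb{T}}$ with $X_{h\mathbb{T}_p}[1]$. The key input is the $\mathbb{T}$-equivariant equivalence
\[
Y = X \otimes BC_{p+}^{\,\operatorname{res}} \simeq X_{hC_p},
\]
where $X$ is restricted to a $\mathbb{T}_p$-spectrum along $\mathbb{T}_p \to \mathbb{T}$ and $X_{hC_p}$ carries the residual action of $\mathbb{T}_p/C_p = \mathbb{T}$. This follows by extending the diagonal $\mathbb{T}$-action on $Y$ to a $\mathbb{T}_p$-action (letting $\mathbb{T}_p$ act on $X$ via restriction and on $BC_{p+}^{\,\operatorname{res}} \simeq (\operatorname{pt})_{hC_p}$ through the trivial $\mathbb{T}_p$-action on $\operatorname{pt}$) and then taking $C_p$-orbits. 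Iterating homotopy orbits along the extension $C_p \to \mathbb{T}_p \to \mathbb{T}$ gives
\[
Y_{h\mathbb{T}} \simeq (X_{hC_p})_{h\mathbb{T}} \simeq X_{h\mathbb{T}_p}.
\]
The norm cofiber sequence for the $1$-dimensional compact abelian Lie group $\mathbb{T}$ reads $\Sigma Y_{h\mathbb{T}} \to Y^{h\mathbb{T}} \to Y^{t\mathbb{T}}$, with the shift by one reflecting the trivial $1$-dimensional adjoint representation of $\mathbb{T}$. Using $Y^{t\mathbb{T}} \simeq 0$ already established in the previous paragraph, one concludes
\[
Y^{h\mathbb{T}} \simeq \Sigma Y_{h\mathbb{T}} \simeq X_{h\mathbb{T}_p}[1],
\]
which finishes the argument.

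The main point of care is the $\mathbb{T}$-equivariant identification $X \otimes BC_{p+}^{\,\operatorname{res}} \simeq X_{hC_p}$; one has to unpack the description $BC_p^{\,\operatorname{res}} \simeq E\mathbb{T}_p/C_p$ in the footnote to Lemma~\ref{lem:freeloopspace} and check that the diagonal $\mathbb{T}$-action on the tensor product really corresponds to the residual action on $X_{hC_p}$ via $\mathbb{T}_p \to \mathbb{T}$. Given that, the rest is a straightforward combination of the Nikolaus--Scholze Tate orbit lemma, the standard norm cofiber sequence for $\mathbb{T}$, and the iteration of homotopy orbits through the extension $C_p \to \mathbb{T}_p \to \mathbb{T}$.
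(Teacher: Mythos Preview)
Your proof is correct and follows essentially the same route as the paper's: establish uniqueness of the cyclotomic structure by showing all targets $Y^{tC_q}$ vanish, deduce $\TP(Y)^{\wedge}\simeq 0$ so that $\TC(Y)\simeq Y^{h\mathbb{T}}$, and then use the vanishing of $Y^{t\mathbb{T}}$ together with the identification $Y\simeq X_{hC_p}$ and iterated orbits along $C_p\to\mathbb{T}_p\to\mathbb{T}$ to obtain $Y^{h\mathbb{T}}\simeq Y_{h\mathbb{T}}[1]\simeq X_{h\mathbb{T}_p}[1]$. You are simply more explicit than the paper in invoking the Tate orbit lemma and the norm cofiber sequence; one small remark is that the assertion ``$Y$ is $p$-complete'' is not strictly needed---what you actually use, and what follows immediately from $X$ being $p$-complete, is that every prime $q\neq p$ acts invertibly on $Y$.
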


\begin{proof}A cyclotomic structure on a spectrum with
$\mathbb{T}$-action $Y$ consists of a family of
$\mathbb{T}$-equivariant maps $\varphi_{\ell} \colon Y \to
Y^{tC_{\ell}}$, one for every prime number $\ell$, including $p$.
In the case of $Y = X \otimes BC_p^{\,\operatorname{res}}$, the target of
this map is contractible for all $\ell$. Indeed, for $\ell \neq p$,
this follows from $Y$ being $p$-complete, and for $\ell = p$, it
follows from Lemma~\ref{lem:tateorbitadvanced}. Hence, there is a
unique such family of maps. In order to evaluate $\TC$, we first
note that Lemma~\ref{lem:tateorbitadvanced} also implies that
$$\TP(X \otimes BC_{p+}^{\,\operatorname{res}}) 
\simeq \TP(X \otimes BC_{p+}^{\,\operatorname{res}})^{\wedge}
\simeq 0.$$
Accordingly,
$$\TC(X \otimes BC_{p+}^{\,\operatorname{res}}) 
\simeq \TC^{-}(X \otimes BC_{p+}^{\,\operatorname{res}})
= ( X \otimes BC_{p+}^{\,\operatorname{res}})^{h\mathbb{T}},$$
and by the vanishing of $(-)^{t\hskip.5pt\mathbb{T}}$, this is further equivalent to
$$( X \otimes BC_{p+}^{\,\operatorname{res}})_{h\mathbb{T}}[1]
\simeq (X_{hC_p})_{h\mathbb{T}}[1] \simeq X_{h\mathbb{T}_p}[1],$$
where, in the middle term, $C_p \subset \mathbb{T}_p$ acts trivially
on $X$.
\end{proof}

\begin{proof}[Proof of Theorem~\ref{thm:groupring}]We have proved that
  there is a cofiber sequence
$$\xymatrix{
{ \THH(R) \otimes BC_{p+}^{\,\operatorname{triv}} } \ar[r] &
{ \THH(R[C_p]) } \ar[r] &
{ \THH(R) \otimes BC_{p+}^{\,\operatorname{res}} \otimes C_p } \cr
}$$
of spectra with $\mathbb{T}$-action in which the left-hand map is the
assembly map. We get an induced fiber sequence with $p$-adic
coefficients. Since the forgetful functor from cyclotomic spectra to spectra 
with $\mathbb{T}$-action creates colimits, this map is also the
assembly map in the $\infty$-category of cyclotomic spectra,
and by Corollary~\ref{cor:tateorbitadvanced}, the induced cyclotomic
structure on cofiber necessarily is the unique cyclotomic structure,
for which
$$\TC( \THH(R, \mathbb{Z}_p) \otimes BC_{p+}^{\,\operatorname{res}}
\otimes C_p )
\simeq \THH(R, \mathbb{Z}_p)_{h\mathbb{T}_p}[1] \otimes C_p.$$
Finally, by the universal property of the colimit, there is a canonical map
$$\xymatrix{
{ \TC(R,\mathbb{Z}_p) \otimes BC_p } \ar[r] &
{ \TC( \THH(R,\mathbb{Z}_p) \otimes BC_p^{\,\operatorname{triv}}), } \cr
}$$
and by~\cite[Theorem 2.7]{clausenmathewmorrow}, this map is an equivalence. 
\end{proof}

\begin{example}For $R = \mathbb{S}$, the sequence in
Theorem~\ref{thm:groupring} becomes
$$\xymatrix{
{ \TC(\mathbb{S}, \mathbb{Z}_p) \otimes BC_{p+} } \ar[r] &
{ \TC(\mathbb{S}[C_p], \mathbb{Z}_p) } \ar[r] &
{ (\mathbb{S}_p \otimes B\mathbb{T}_{p+})[1] \otimes C_p, } \cr
}$$
where $B\mathbb{T}_p \simeq \mathbb{P}^{\infty}(\mathbb{C})$. One can
also give a formula for $\TC(\mathbb{S}[C_p], \mathbb{Z}_p) $ in this
case, but this is more complicated than the formula for the cofiber of the
assembly map.  
\end{example}

Finally, we evaluate the homotopy groups of
$\THH(R,\mathbb{Z}_p)_{h\mathbb{T}_p}$ in the case, where $R$ is a
$p$-torsion free perfectoid ring. We consider the diagram
$$\xymatrix{
{ \pi_0(\THH(R,\mathbb{Z}_p)^{h\mathbb{T}}) } \ar[r] \ar[d] &
{ \pi_0(\THH(R,\mathbb{Z}_p)^{h\mathbb{T}_p})\phantom{,} } \ar[d] \cr
{ \pi_0(\THH(R,\mathbb{Q}_p)^{h\mathbb{T}}) } \ar[r] &
{ \pi_0(\THH(R,\mathbb{Q}_p)^{h\mathbb{T}_p}), } \cr
}$$
where the horizontal maps are given by restriction along
$\mathbb{T}_p \to \mathbb{T}$, and where the vertical maps are given
by change-of-coefficients. The respective homotopy fixed point
spectral sequences endow each of the four rings with a descending
filtration, which we refer to as the Nygaard filtration, and they are
all complete and separated in the topology.\index{Nygaard filtration}
We have identified the top
left-hand ring with Fontaine's ring $A = A_{\inf}(R)$. The lower
horizontal map is an isomorphism, and the common ring is 
identified with $A[\nicefrac{1}{p}]^{\wedge}$, where
``$(-)^{\wedge}$'' indicates Nygaard completion. We further have
compatible edge homomorphisms
$$\xymatrix{
{ \pi_0(\THH(R,\mathbb{Z}_p)^{h\mathbb{T}}) } \ar[r]
\ar[d]^-{\theta_{\phantom{p}}} &
{ \pi_0(\THH(R,\mathbb{Z}_p)^{h\mathbb{T}_p})\phantom{,} }
\ar[d]^-{\theta_p} \cr
{ \THH_0(R,\mathbb{Z}_p) } \ar@{=}[r] &
{ \THH_0(R,\mathbb{Z}_p), } \cr
}$$
whose kernels $I \subset A$ and $I_p \subset A_p$ are principal
ideals, and we can choose generators $\xi$ and $\xi_p$ such that the
top horizontal map takes $\xi$ to $p\hskip.5pt\xi_p$. This identifies
the top right-hand ring $A_p$ with the subring
$$\textstyle{ A_p = (\sum_{n \geq 0}p^{-n}I^n)^{\wedge} \subset
A[\nicefrac{1}{p}]^{\wedge} }$$
given by the Nygaard completion of the Rees
construction.\index{Rees construction!$p$-adic} 

\begin{proposition}\label{lem:pfoldcoverperfectoid}Let $R$ be a $p$-torsion
free perfectoid ring. The map
$$\xymatrix{
{ \pi_*(\THH(R,\mathbb{Z}_p)^{h\mathbb{T}_p}) } \ar[r]^-{\operatorname{can}} &
{ \pi_*(\THH(R,\mathbb{Z}_p)^{t\mathbb{T}_p}) } \cr
}$$
is given by the localization of graded $A_p$-algebras
$$\xymatrix{
{ A_p[u,v_p]/(uv_p - \xi_p) } \ar[r] &
{ A_p[u,v_p^{\pm1}]/(uv_p - \xi_p) = A_p[v_p^{\pm1}], } \cr
}$$
where $u$ and $v_p$ are homogeneous elements of degree $2$ and $-2$,
and where $\xi_p$ is a generator of the kernel $I_p$ of the edge
homomorphism $\theta_p \colon A_p \to R$.
\end{proposition}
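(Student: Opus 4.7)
The plan is to run the analog for $\mathbb{T}_p$ of the argument used to prove Theorem~\ref{thm:tpperfectoid}, the only modification being to track how restriction along the $p$-fold cover $\mathbb{T}_p \to \mathbb{T}$ affects the generator of the relevant principal ideal. The key inputs are that $B\mathbb{T}_p \simeq \mathbb{CP}^{\infty}$, so that $H^*(B\mathbb{T}_p)$ is a polynomial ring on a class in degree $2$, together with Theorem~\ref{thm:bokstedtperiodicityperfectoid}, which gives $\THH_*(R,\mathbb{Z}_p) = R[x]$ concentrated in even degrees.

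First I would set up the homotopy fixed point and Tate spectral sequences
$$E_{i,j}^2 = H^{-i}(B\mathbb{T}_p, \THH_j(R,\mathbb{Z}_p)) \Rightarrow \pi_{i+j}(\THH(R,\mathbb{Z}_p)^{h\mathbb{T}_p})$$
and its Tate cohomology analog, and observe that both collapse at $E^2$, since every non-zero $E^2$-class lies in even total degree. The canonical map $\operatorname{can} \colon \THH(R,\mathbb{Z}_p)^{h\mathbb{T}_p} \to \THH(R,\mathbb{Z}_p)^{t\mathbb{T}_p}$ induces on $E^2$-terms the localization $R[t_p,x] \to R[t_p^{\pm 1},x]$ inverting $t_p$, so it is the corresponding localization on abutments once multiplicative generators of the two sides have been named.

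Next I would identify $\pi_0(\THH(R,\mathbb{Z}_p)^{h\mathbb{T}_p}) \cong A_p$ as in the paragraph preceding the proposition: the edge homomorphism $\theta_p$ is surjective with principal kernel $I_p$, the induced Nygaard filtration is complete and separated with graded pieces free $R$-modules of rank one, and comparison with the $\mathbb{T}$-fixed point computation together with the identification $\xi = p\xi_p$ (reflecting the fact that $B\mathbb{T}_p \to B\mathbb{T}$ is multiplication by $p$ on $\pi_2$) picks out the subring $(\sum_{n\geq 0} p^{-n} I^n)^{\wedge} \subset A[\nicefrac{1}{p}]^{\wedge}$. With $\xi_p$ chosen accordingly, the $2$-periodicity of the Tate construction yields $\pi_*(\THH(R,\mathbb{Z}_p)^{t\mathbb{T}_p}) = A_p[v_p^{\pm 1}]$ for a generator $v_p$ in degree $-2$, and defining $u \in \pi_2(\THH(R,\mathbb{Z}_p)^{h\mathbb{T}_p})$ as the unique lift of $\xi_p v_p^{-1}$ under $\operatorname{can}$ (which exists because $\operatorname{can}$ is the localization inverting $v_p$) produces the relation $u v_p = \xi_p$ and hence the presentation stated in the proposition.

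The main obstacle is the ring-theoretic identification of $\pi_0(\THH(R,\mathbb{Z}_p)^{h\mathbb{T}_p})$ with the Rees construction $A_p$: one must juggle the commutative square relating $A$, $A_p$, and $A[\nicefrac{1}{p}]^{\wedge}$ (where the two $\mathbb{Q}_p$-coefficient corners are equal), the completeness of the Nygaard filtration, and the divisibility relation $\xi = p\xi_p$ simultaneously, and verify that $I_p$ is truly principal in $A_p$ rather than merely having a principal image. Once this calibration is in place, the remaining claims follow mechanically from the collapse of the spectral sequences and $2$-periodicity, and the shape of $\operatorname{can}$ is forced by its description on the $E^2$-page.
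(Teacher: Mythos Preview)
Your proposal is correct and follows exactly the approach the paper indicates: the paper's proof is the single sentence ``The proof is analogous to the proof of Theorem~\ref{thm:tpperfectoid},'' and you have faithfully unpacked that analogy, including the identification of $A_p$ via the paragraph preceding the proposition. If anything, you have supplied more detail than the paper does.
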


\begin{proof}The proof is analogous to the proof of Theorem~\ref{thm:tpperfectoid}.
\end{proof}

\begin{corollary}\label{lem:pfoldcoverperfectoid}Let $R$ be a $p$-torsion
free perfectoid ring. For $m \geq 0$,
$$\pi_{2m}(\THH(R,\mathbb{Z}_p)_{h\mathbb{T}_p}) = A_p/I_p^{m+1} \cdot v_p^{-(m+1)},$$
and the remaining homotopy groups are zero.
\end{corollary}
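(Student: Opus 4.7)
Write $X = \THH(R,\mathbb{Z}_p)$. The plan is to combine the preceding proposition with the norm cofiber sequence
\[
X_{h\mathbb{T}_p}[1] \to X^{h\mathbb{T}_p} \xrightarrow{\operatorname{can}} X^{t\mathbb{T}_p},
\]
available because $\mathbb{T}_p \cong S^1$ is a one-dimensional compact Lie group, and to read off $\pi_*(X_{h\mathbb{T}_p})$ from the associated long exact sequence. Since by the proposition both $\pi_*(X^{h\mathbb{T}_p})$ and $\pi_*(X^{t\mathbb{T}_p})$ are concentrated in even degrees, the long exact sequence collapses into short exact sequences
\[
0 \to \pi_{2m+1}(X_{h\mathbb{T}_p}) \to \pi_{2(m+1)}(X^{h\mathbb{T}_p}) \xrightarrow{\operatorname{can}} \pi_{2(m+1)}(X^{t\mathbb{T}_p}) \to \pi_{2m}(X_{h\mathbb{T}_p}) \to 0,
\]
one for each $m \in \mathbb{Z}$, reducing the computation to the kernels and cokernels of $\operatorname{can}$.

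I would then plug in the explicit form of $\operatorname{can}$. Under the identifications $A_p[u,v_p]/(uv_p-\xi_p)$ and $A_p[v_p^{\pm1}]$ from the proposition, the relation $uv_p = \xi_p$ forces $u^{m+1} \mapsto \xi_p^{m+1}v_p^{-(m+1)}$, so for $m \geq 0$ the map in degree $2(m+1)$ is the map of free rank-one $A_p$-modules $A_p \cdot u^{m+1} \to A_p \cdot v_p^{-(m+1)}$ given by multiplication by $\xi_p^{m+1}$. Reading off the cokernel immediately gives
\[
\pi_{2m}(X_{h\mathbb{T}_p}) = A_p/\xi_p^{m+1}A_p \cdot v_p^{-(m+1)} = A_p/I_p^{m+1} \cdot v_p^{-(m+1)},
\]
since $I_p = (\xi_p)$. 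The homotopy groups in negative degrees vanish because $R$, hence $X$, is connective, so the homotopy orbits $X_{h\mathbb{T}_p}$ are connective as well.

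The main obstacle is the vanishing of the odd positive homotopy groups, which via the short exact sequence amounts to showing that $\xi_p$ is a non-zero-divisor on $A_p$; this is where the hypothesis that $R$ be $p$-torsion free is needed. Since $R$ is $p$-torsion free perfectoid, $A = A_{\inf}(R)$ is $p$-torsion free and the generator $\xi \in A$ of $\ker(\theta)$ is a non-zero-divisor, so $\xi$, and hence $\xi_p = \xi/p$, is a non-zero-divisor on the localization $A[1/p]$. The Rees subring $\sum_{n \geq 0} p^{-n}I^n \subset A[1/p]$ is therefore also $\xi_p$-torsion free, and its Nygaard (equivalently, $\xi_p$-adic) completion $A_p$ inherits the property, since completion along a non-zero-divisor preserves non-zero-divisor-hood via the short exact sequence $0 \to A_p \xrightarrow{\xi_p} A_p \to A_p/\xi_p \to 0$ (the quotient being already $\xi_p$-complete).
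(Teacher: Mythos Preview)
Your proof is correct and is precisely the intended deduction from the preceding proposition; the paper states the corollary without proof, and the norm cofiber sequence together with the explicit form of $\operatorname{can}$ is the natural route.

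One small point of phrasing: your justification that $\xi_p$ is a non-zero-divisor on $A_p$ is slightly circular as written, since the short exact sequence $0 \to A_p \xrightarrow{\xi_p} A_p \to A_p/\xi_p \to 0$ already presupposes the conclusion. The clean version is to argue on the uncompleted Rees ring $B = \sum_{n\ge 0} p^{-n}I^n \subset A[1/p]$ first (where $\xi_p$ is a non-zero-divisor because $B$ sits inside the domain $A[1/p]$), and then invoke the general fact that a non-zero-divisor $x$ on $B$ remains a non-zero-divisor on $\varprojlim_n B/x^n$: if $x\cdot(b_n)=0$ then each $b_n$ lies in $x^{n-1}B/x^nB$, and compatibility under $B/x^{n+1}\to B/x^n$ forces $b_n=0$. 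Alternatively, one can bypass the Rees description and argue directly from the Nygaard filtration: multiplication by $\xi_p$ raises filtration by one and induces the identity $R\to R$ on each graded piece, so injectivity follows from completeness and separatedness.
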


\begin{remark}\label{rem:pfoldcoverperfect}It is interesting to
compare the calculation above to the case, where $R$ is a perfect
$\mathbb{F}_p$-algebra. In this case, the map
$$\xymatrix{
{ \pi_0(\THH(R,\mathbb{Z}_p)^{h\mathbb{T}}) } \ar[r] &
{ \pi_0(\THH(R,\mathbb{Z}_p)^{h\mathbb{T}_p}) } \cr
}$$
takes $uv - p = 0$ to $(uv_p-1)p = 0$, and since $1-uv_p$ is a unit,
we conclude that, in the target ring, $p = 0$. Hence, this ring is a
power series ring $R[[y]]$ on a generator $y$ that is represented by $t_px$
in the spectral sequence
$$E^2 = R[t_p,x] \Rightarrow
\pi_*(\THH(R,\mathbb{Z}_p)^{h\mathbb{T}_p}).$$
Hence, for $m \geq 0$, we have
$$\pi_{2m}(\THH(R,\mathbb{Z}_p)_{h\mathbb{T}_p}) = R[[y]]/y^{m+1} \cdot v_p^{-(m+1)}$$
and the remaining homotopy groups are zero.
\end{remark}

We also consider the case of the semi-direct product
$G = \operatorname{Aut}(C_p) \ltimes C_p.$
The conjugacy classes of elements in $G$ are represented by the
elements $(\id,0)$, $(\id,1)$, and $(\alpha,0)$ with
$\alpha \in \operatorname{Aut}(C_p) \smallsetminus \{\id\}$, the
centralizers of which are $G$, $C_p$, and $\text{Aut}(C_p)$,
respectively. Hence, for every $\mathbb{E}_1$-algebra in spectra $R$, 
there is a cofiber sequence of spectra with $\mathbb{T}$-action
$$\begin{aligned}
{} & \xymatrix{
{ \THH(R) \otimes BG_+^{\operatorname{triv}} } \ar[r] &
{ \THH(R[G]) } \cr
} \cr
{} & \xymatrix{
{ } \ar[r] &
{ \THH(R) \otimes BC_{p+}^{\operatorname{res}} \oplus
\THH(R) \otimes B\operatorname{Aut}(C_p)_+ \otimes \operatorname{Aut}(C_p), } \cr
} \cr
\end{aligned}$$
where the last tensor factor $\operatorname{Aut}(C_p)$ is viewed as a
pointed space with $\id$ as basepoint. Moreover, as cyclotomic
spectra, the right-hand summand splits off. Therefore, after
$p$-completion, we arrive at the following 
statement.  

\begin{theorem}\label{thm:tcsemidirectproduct}Let
$G = \operatorname{Aut}(C_p) \ltimes C_p$, and let $R$ be a connective
$\mathbb{E}_1$-ring. There is a canonical fiber sequence of spectra
$$\begin{aligned}
{} &\xymatrix{
{ \TC(R,\mathbb{Z}_p) \otimes BG_+ } \ar[r] &
{ \TC(R[G], \mathbb{Z}_p) } \cr
} \cr
{} & \xymatrix{
{ \phantom{ \TC(R,\mathbb{Z}_p) \otimes BG } } \ar[r] &
{ \THH(R,\mathbb{Z}_p)_{h\mathbb{T}_p}[1] \oplus \TC(R,\mathbb{Z}_p)
  \otimes \operatorname{Aut}(C_p), } \cr
} \cr
\end{aligned}$$
and moreover, the summand $\TC(R,\mathbb{Z}_p) \otimes
\operatorname{Aut}(C_p)$ splits off $\TC(R[G],\mathbb{Z}_p)$.
\end{theorem}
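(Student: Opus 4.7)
The plan is to apply the exact functor $\TC(-,\mathbb{Z}_p) = \operatorname{map}_{\operatorname{CycSp}}(\mathbb{S}^{\,\operatorname{triv}},-)$ of Definition~\ref{def:tc} to the cofiber sequence of cyclotomic spectra displayed just before the theorem (after $p$-completion), and then to identify each of the three resulting terms using additivity of $\TC$ together with the cyclotomic-spectrum splitting of the right-hand summand that is also recorded there.

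For the leftmost term I would argue exactly as in the proof of Theorem~\ref{thm:groupring}: by the trivial-action assembly equivalence of \cite[Theorem~2.7]{clausenmathewmorrow} applied to $BG_+^{\operatorname{triv}}$, one obtains $\TC(\THH(R,\mathbb{Z}_p) \otimes BG_+^{\operatorname{triv}}) \simeq \TC(R,\mathbb{Z}_p) \otimes BG_+$, and under this identification the induced map becomes the assembly map appearing in the theorem.

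For the right-hand cofiber term I would treat the two summands separately. The first summand, $\TC(\THH(R,\mathbb{Z}_p) \otimes BC_{p+}^{\,\operatorname{res}})$, is identified as $\THH(R,\mathbb{Z}_p)_{h\mathbb{T}_p}[1]$ by direct application of Corollary~\ref{cor:tateorbitadvanced}. For the second summand $\TC(\THH(R,\mathbb{Z}_p) \otimes B\operatorname{Aut}(C_p)_+ \otimes \operatorname{Aut}(C_p))$, the key observation is that $|\operatorname{Aut}(C_p)| = p-1$ is coprime to $p$; hence the $p$-completion of $\Sigma^{\infty}_+ B\operatorname{Aut}(C_p)$ is equivalent to $\mathbb{S}_p$, and the $\mathbb{T}$-action on $B\operatorname{Aut}(C_p)$, which classifies an element of order prime to $p$, becomes trivial after $p$-completion. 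Consequently, $\THH(R,\mathbb{Z}_p) \otimes B\operatorname{Aut}(C_p)_+$ is equivalent, as a $p$-completed cyclotomic spectrum, to $\THH(R,\mathbb{Z}_p)$ itself with its standard cyclotomic structure, and applying $\TC$ and then smashing with the pointed set $\operatorname{Aut}(C_p)$ (with $\id$ as basepoint) yields $\TC(R,\mathbb{Z}_p) \otimes \operatorname{Aut}(C_p)$, a wedge of $p-2$ copies of $\TC(R,\mathbb{Z}_p)$.

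The additional claim that $\TC(R,\mathbb{Z}_p) \otimes \operatorname{Aut}(C_p)$ splits off $\TC(R[G],\mathbb{Z}_p)$ then follows immediately from additivity of $\TC$ applied to the cyclotomic-spectrum splitting $\THH(R[G]) \simeq Y \oplus (\THH(R) \otimes B\operatorname{Aut}(C_p)_+ \otimes \operatorname{Aut}(C_p))$ recorded just above the theorem. The main obstacle in the plan is the identification of the $B\operatorname{Aut}(C_p)$-factor as a $p$-completely trivial cyclotomic contribution: this requires verifying simultaneously that the restricted $\mathbb{T}$-action is $p$-completely trivial and that the cyclotomic Frobenius $\varphi_p$ matches the standard one on $\THH(R,\mathbb{Z}_p)$, both of which rest on the coprimality of $p-1$ with $p$ and the resulting $p$-local splitting of $\Sigma^{\infty}_+ B\operatorname{Aut}(C_p)$.
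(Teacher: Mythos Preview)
Your proposal is correct and follows the paper's approach. The paper does not give a separate proof of this theorem; it records the cofiber sequence of spectra with $\mathbb{T}$-action and the cyclotomic splitting of the $B\operatorname{Aut}(C_p)$-summand in the paragraph preceding the statement, and then simply says ``after $p$-completion, we arrive at the following statement,'' leaving the application of $\TC$ and the identification of each term to the reader---which is exactly what you carry out. Your treatment of the $B\operatorname{Aut}(C_p)$-summand via the $p$-adic contractibility of $B\operatorname{Aut}(C_p)$ (using that $\lvert\operatorname{Aut}(C_p)\rvert = p-1$ is prime to $p$, so the $\mathbb{T}$-equivariant collapse map $B\operatorname{Aut}(C_p)_+ \to S^0$ is a $p$-adic equivalence) is the intended mechanism and correctly supplies the detail the paper omits.
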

\index{group ring!topological cyclic homology of|)}
\index{topological cyclic homology!of group ring|)}

\vspace{5mm}

{\small
\noindent\textsc{Nagoya University, Japan and University of
  Copenhagen, Denmark} \hfill\space\linebreak
\textit{E-mail address}: \texttt{larsh@math.nagoya-u.ac.jp}

\vspace{2mm}

\noindent\textsc{Universit\"{a}t M\"{u}nster, Germany} \hfill\space\linebreak
\textit{E-mail address}: \texttt{nikolaus@uni-muenster.de}
}

\begin{theindex}

  \item $\infty$-category of   cyclotomic spectra, 7
  \item $\mathcal{O}_C$, 29
    \subitem $K$-theory of, 32
    \subitem topological cyclic homology of, 30
  \item $\mu$, 30
  \item $p$-adic homotopy groups, 3
  \item $p$-complete pro-infinitesimal thickening, 3, 26

  \indexspace

  \item Adams spectral sequence, 13--16
    \subitem $k$-based, 16
  \item algebraic $K$-theory
    \subitem of stable $\infty$-category, 10
  \item assembly map, 34

  \indexspace

  \item B\"{o}kstedt periodicity, 1, 3, 16--21
    \subitem for perfectoid ring, 23
  \item B\"{o}kstedt spectral sequence, 16
  \item Bhatt--Morrow--Scholze filtration, 5
  \item Bott periodicity, 29--33

  \indexspace

  \item Clausen--Mathew--Morrow theorem, 2, 33
  \item cogroupoid, 14
    \subitem associated with $\mathbb{E}_{\infty}$-ring, 14, 15
  \item cogroupoid module, 14
    \subitem associated with spectrum, 15
  \item Connes' operator, 11
    \subitem and power operations, 11
  \item cyclotomic Frobenius, 6
  \item cyclotomic spectrum, 7
  \item cyclotomic trace map, 2, 9
    \subitem on connective covers, 10

  \indexspace

  \item divided power algebra, 1
  \item Dundas--Goodwillie--McCarthy theorem, 2

  \indexspace

  \item extension of scalars, 13

  \indexspace

  \item Fargues--Fontaine curve, 23
  \item Fontaine's ring of $p$-adic periods, 22, 26

  \indexspace

  \item group ring
    \subitem topological cyclic homology of, 33--38

  \indexspace

  \item Hochschild homology, 12
  \item Hochschild--Kostant--Rosenberg filtration, 12
  \item horizontal
    \subitem with respect to stratification, 16

  \indexspace

  \item Land--Tamme theorem, 2

  \indexspace

  \item negative topological cyclic homology, 2, 8
    \subitem of perfectoid ring, 27
  \item noncommutative motives, 9
  \item Nygaard filtration, 37

  \indexspace

  \item perfectoid ring, 3, 21--29
    \subitem definition of, 23
    \subitem properties of, 23
  \item periodic topological cyclic homology, 2, 8
    \subitem of perfectoid ring, 27
  \item power operations
    \subitem Araki--Kudo, 11
    \subitem Dyer--Lashof, 11
  \item prism, 4, 23
  \item prismatic cohomology, 5
  \item pseudocoherent module, 24

  \indexspace

  \item Rees construction
    \subitem $p$-adic, 37
  \item restriction of scalars, 13

  \indexspace

  \item semiperfectoid ring, 3
  \item stratification
    \subitem relative to cogroupoid, 14
  \item symmetric monoidal product
    \subitem of modules over cogroupoid, 15
  \item syntomic cohomology, 5

  \indexspace

  \item Tate diagonal, 6
  \item Tate orbit lemma, 9, 35
  \item tilt, 22
  \item topological cyclic homology, 2, 8--10
    \subitem negative, 2, 8, 27
    \subitem of $\mathcal{O}_C$, 30
    \subitem of group ring, 33--38
    \subitem periodic, 2, 8, 27
  \item topological Hochschild homology, 6--8
    \subitem of $\mathbb{E}_{\infty}$-algebra in spectra, 6
    \subitem of stable $\infty$-categories, 7
    \subitem trace property of, 10

  \indexspace

  \item untilt, 23

\end{theindex}

\end{document}